\documentclass[conference, onecolumn]{IEEEtran}
\makeatletter
\def\ps@headings{%
\def\@oddhead{\mbox{}\scriptsize\rightmark \hfil \thepage}%
\def\@evenhead{\scriptsize\thepage \hfil \leftmark\mbox{}}%
\def\@oddfoot{}%
\def\@evenfoot{}}
\makeatother \pagestyle{headings}

\usepackage{verbatim}   
\usepackage{algorithm}
\usepackage{algorithmic}
\usepackage{times}
\usepackage{amsmath}\usepackage{amssymb}
\usepackage{latexsym}
\usepackage{graphicx, color}
\usepackage{epsfig}
\usepackage{url}
\usepackage{caption}
\usepackage{psfrag}
\usepackage{cite}
\usepackage{subfigure}



 \newcommand{\bnum}{\begin{enumerate}}
 \newcommand{\enum}{\end{enumerate}}
 \newcommand{\ba}{\begin{array}}
 \newcommand{\ea}{\end{array}}
 \newcommand{\bitm}{\begin{itemize}}
 \newcommand{\eitm}{\end{itemize}}
 \newcommand{\beq}{\begin{equation}}
 \newcommand{\eeq}{\end{equation}}
 \newcommand{\beqn}{\begin{eqnarray}}
 \newcommand{\eeqn}{\end{eqnarray}}
 \newcommand{\beqno}{\begin{eqnarray*}}
 \newcommand{\eeqno}{\end{eqnarray*}}
 \newcommand{\bma}{\begin{displaymath}}
 \newcommand{\ema}{\end{displaymath}}
 \newcommand{\bnu}{\begin{enumerate}}
 \newcommand{\enu}{\end{enumerate}}
 \newcommand{\bce}{\begin{center}}
 \newcommand{\ece}{\end{center}}
 \newcommand{\btb}{\begin{tabular}}
 \newcommand{\etb}{\end{tabular}}
 \newcommand{\bmat}{\begin{pmatrix}}
 \newcommand{\emat}{\end{pmatrix}}

%
%
%


\newtheorem{theorem}{\textbf{Theorem}}
\newtheorem{lemma}{\textbf{Lemma}}

\newtheorem{remark}{\textbf{Remark}}
\newtheorem{definition}{Definition}

\begin{document}

\title{\LARGE \bf Scheduling in Parallel Queues with Randomly Varying Connectivity and Switchover Delay}

\author{Guner D. Celik, Long B. Le and Eytan Modiano}

\maketitle

\begin{abstract}
We consider a dynamic server control problem for two parallel queues
with \emph{randomly varying connectivity} and \emph{server
switchover time between the queues}. At each time slot the server
decides either to stay with the current queue 
or switch to the other queue based on the current connectivity and
the queue length information. 
The introduction of switchover time is a new modeling component of
this problem, which makes the problem much more challenging. We
develop a novel approach to characterize the stability region of the
system by using \emph{state action frequencies}, which are
stationary solutions to a Markov Decision Process (MDP) formulation
of the corresponding saturated system. We 
characterize the stability region explicitly in terms of the
connectivity parameters and develop a 
frame-based
dynamic control (FBDC) policy that is shown to be
throughput-optimal. In fact, the FBDC policy 
provides \emph{a new framework for developing
throughput-optimal network control policies} using state action
frequencies. 
Further, we develop simple \emph{Myopic policies} that achieve more
than $96\%$ of the stability region. 
Finally, simulation results show that the Myopic policies may
achieve the full stability region and are more delay efficient than
the FBDC policy in most cases.
\end{abstract}

\section{Introduction}

Scheduling a dynamic server over randomly varying wireless channels
has been a very popular topic since the seminal works by Tassiulas
and Ephremides in \cite{tass92} and \cite{tass93}. These works were
generalized to many different settings by several authors in the
network control field (e.g.,
\cite{EryilOzMod07,LinShr05,ModShahZuss06,neely03,neely05,shakk08,Stolyar04,WuSri06}).
However, the significant effect of server switchover time between
the queues has been ignored. We consider a parallel queue network
with \emph{randomly varying connectivity} and \emph{the server
switchover time} between the queues and study the impact of the
switchover time on the system performance.

Our model consists of two parallel queues whose connectivity
is varying in time according to a stochastic process
and one server receiving data packets from the queues by dynamically
adjusting its
position as shown in Fig.~\ref{Fig:two_queues_switching}. 
We consider a slotted system where the slot length is equal to a
packet transmission time and it takes one slot for
the server to switch from one queue to the other. 
A packet is successfully received from queue-$i$ if queue-$i$ is
connected, if the server is present at queue-$i$ and if it decides
to stay at queue-$i$. Therefore, the server is to dynamically choose
to stay with the current queue or switch to the other queue based on
the connectivity and the queue length information of both queues. To
the best of our knowledge, this paper is the first to consider
random connectivity and switchover times to be simultaneously
present in the system. Our purpose is to characterize the effect of
switchover time on system performance. In particular, we are
interested in the impact of the switchover time on the maximum
throughput region (or the throughput region for simplicity) and to
find the optimal scheduling policy for the server that stabilizes
the system whenever the arrivals are within the throughput region.

Switchover delay in dynamic server control problems is a
widespread phenomenon that can be observed in many practical
systems. In satellite systems where a mechanically steered antenna
is providing service to 
ground stations, the time to
switch from one station to another can be around 10ms \cite{BlakeLong09}, \cite{TolShu96}.
Similarly, the delay for electronic beamforming can
be on the order of $10\mu s$ in wireless radio systems
\cite{BlakeLong09}, \cite{TolShu96}.
Furthermore, in optical
communication systems tuning delay for transceivers can take
significant time ($\mu$s-ms) \cite{Berz}, \cite{ModBarry00}. We show in this paper
that switchover delay indeed fundamentally changes the system
characteristics. As compared to the seminal work of Tassiulas and
Ephremides in \cite{tass93},  the supported rate region shrinks
considerably, the optimal policies change and novel mathematical
approaches might be necessary for systems with nonzero switchover
delay.

\begin{figure}
\centering \psfrag{4\r}[l][][1]{$\!\!\lambda_1$}
\psfrag{5\r}[l][][1]{$\lambda_2$}
\psfrag{1\r}[l][][0.85]{\!\!\!\!\!\!\!\!Server}
\psfrag{2\r}[l][][1]{$\!\!\!\!\!\! C_1$}
\psfrag{3\r}[l][][1]{$\!\!\!\! C_2$} \psfrag{6\r}[l][][1]{$\!\!t_s$}
\includegraphics[width=0.2\textwidth]{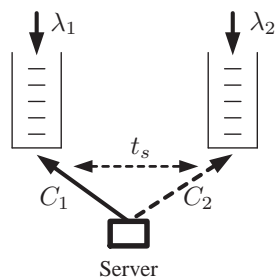}
\caption{System model. Two queues with randomly varying
connectivities ($C_1$ and $C_2$) and $t_s=1\; \textrm{slot}$
switchover time. The server is currently connected to queue-1 and it
takes 1 time slot to switch to
queue-2.}\label{Fig:two_queues_switching}
\end{figure}

Note that 
the switchover time can be smaller or larger than a packet
transmission duration in practical systems. In systems where it is
less than 1 slot, when the server switches from one queue to
another, it usually has to waste the entire slot due to
synchronization issues. For systems with significant switching times
(e.g., vehicular networks with mobile relays), our analysis can be
used as a starting point while keeping in mind that similar solution
techniques will apply. Finally note that some of our results, in
particular the FBDC policy, and the throughput region
characterization in terms of state action frequencies hold for more
general systems such as many queues with arbitrary switchover times
and channel statistics.

We analytically characterize the throughput region
$\mathbf{\Lambda}$: The set of all arrival rate pairs
($\lambda_1,\lambda_2$) that the system can stably support. We
derive necessary and sufficient stability conditions on the arrival
rate pairs $(\lambda_1,\lambda_2)$ in terms of the connectivity
parameters for both
correlated and uncorrelated connectivity processes. 
For this, we consider the corresponding saturated system in which
there is always a packet to send in both queues and we formulate a
discrete time Markov Decision Process (MDP) whose stationary
deterministic solutions in terms of \emph{state action
frequencies} provide corner points of the
polytope of achievable rates, i.e., the throughput region. We
develop a 
frame based dynamic control (FBDC) policy
for the original system with dynamic arrivals. FBDC policy is based
on solving a Linear Programm (LP) corresponding to the MDP solution
for the saturated system and it is throughput-optimal asymptotically
in the frame length. FBDC policy is applicable to many general systems
and provides \emph{a new framework} for developing
throughput-optimal policies for network control. Namely, for any
system whose corresponding saturated system is Markovian with finite
state space, FBDC policy achieves stability by solving an LP to find
the stationary MDP solution of the saturated system and applying
this solution over a frame in the actual system. We also develop
simple Myopic policies with throughput guarantees that do not
require the solution of an LP and that can be more delay efficient than the FBDC policy.
We show that the Myopic policy with ``one lookahead'' 
achieves at least $90\%$ of the throughput region while the Myopic policies
with
2 and 3-lookahead achieve more than $94\%$ and $96\%$ of the
stability region respectively.
%
%
The mathematical solution technique used for proving the stability
of various policies is novel in this paper in that it involves
utilizing Markov Decision Theory inside the Lyapunov stability
arguments.

%

Optimal control of queueing systems and communication networks has
been a very active research topic over the past two decades. In the
the seminal paper \cite{tass92}, Tassiulas and Ephremides
characterize the stability region and propose the well-known
max-weight scheduling algorithm. 
Later in \cite{tass93}, they consider a parallel queueing system
with randomly varying connectivity and prove the
throughput-optimality of the Longest-Connected-Queue scheduling
policy. These results are extended to the joint power allocation and
routing problem in wireless networks in \cite{neely03} and
\cite{neely05} and the optimal scheduling problem for 
switches in \cite{ShahWis06} and \cite{Stolyar04}. Decentralized and
greedy scheduling algorithms with throughput guarantees are studied
in \cite{ChapKarSar05}, \cite{Long10}, \cite{LinShr05},
\cite{WuSri06}, while \cite{EryilOzMod07} and \cite{ModShahZuss06}
consider distributed algorithms that achieve throughput-optimality
(see \cite{Geor_Neely_Tass06} for a detailed review).
In \cite{KarLuo07}, \cite{AnnaEphr09} and \cite{shakk08} the network
control problem with delayed channel state information is studied,
while \cite{AhmadTaraKrish09} and \cite{LiNeely10} investigate
network control with limited channel sensing. These existing works
do not consider the server switchover times. Scheduling in optical
networks under reconfiguration latency was considered in
\cite{Berz}, where the transmitters and receivers were assumed to be
unavailable during the system reconfiguration time. While switchover
delay has been studied in polling models in the queueing theory
community (e.g., \cite{AltKonsLiu92}, \cite{Levy},
\cite{LiuNainTow}, \cite{vish06}), random connectivity was not
considered since it may not arise in classical polling applications.
To the best of our knowledge, this paper is the first to
simultaneously consider random connectivity and server switchover
times.


The main contribution of this report is solving the scheduling
problem in parallel queues with \emph{randomly varying connectivity}
and \emph{server switchover times} for the first time. In
particular,
\begin{itemize}
\item
We establish the stability region of the 
system
using the \emph{state action frequencies} 
of the MDP 
formulation for the corresponding saturated system. Furthermore, we
characterize the stability region explicitly in terms of the
connectivity parameters.

\item
We develop a 
frame-based dynamic control (FBDC)
policy and show that 
it is throughput-optimal asymptotically in the frame length. The
FBDC policy is applicable to more general systems whose
corresponding saturated system is Markovian with finite state and
action spaces, for example, networks with more than two queues,
arbitrary switchover times and general 
arrival 
and  Markov modulated channel processes.

\item
We develop a simple 1-Lookahead Myopic policy that achieves 
at least $90\%$ of the stability region while the Myopic policies
with
2 and 3-lookahead achieve more than $94\%$ and $96\%$ of the
stability region respectively.

\item
We present 
simulations suggesting that the Myopic policies may be
throughput-optimal and are more delay efficient than the
throughput-optimal FBDC policy in most cases.
\end{itemize}
This paper provides \emph{a novel framework for solving network
control problems} via
characterizing the stability region 
in terms of state action frequencies 
and achieving throughput-optimality by utilizing the state action
frequencies over frames.

In the next section we introduce the system model and in Section
\ref{Sec:Motiv} we provide a motivating example by analyzing the
case with uncorrelated channel processes over time. We establish the
throughput region in Section \ref{Sec:Stab_Reg} via formulating a
MDP for the saturated system. We prove the throughput optimality of the
FBDC policy in Section
\ref{Sec:Opt_Pol} and 
analyze simple Myopic policies with large throughput guarantees in
Section \ref{Sec:Myopic_Pol}. We provide simulation results in Section \ref{Sec:Sim} and conclude
in Section \ref{Sec:Conc}.



\section{The Model}\label{Sec:Model}

Consider two parallel queues with randomly varying connectivity and
one server receiving data packets from the queues. 
Time is slotted into unit-length time slots 
equal to one
packet transmission time; $t \in \{0,1,2,...\}$. 
It takes one slot for the server to switch from one queue to the
other, and $m(t)$ denotes
the queue at which the server is present at 
slot $t$. 
Let the stationary stochastic process  $A_i(t)$, with average
arrival rate $\lambda_i$, denote the number of packets arriving to
queue $i$ at time slot $t$ where
$\mathbb{E}[A_i^2(t)]\le
A_{\max}^2$, $i \in \{1,2\}$. 
Let $\mathbf{C}(t)=(C_1(t),C_2(t))$
be the channel (connectivity) process 
at time slot $t$, where $C_i(t)= 0$ for the OFF state (disconnected)
and $C_i(t)= 1$ for the ON state (connected).  We assume that the
processes $A_1(t),A_2(t),C_1(t)$ and $C_2(t)$ are independent. 

We analyze
two different models for the connectivity process $\mathbf{C}(t)$:
\begin{definition}[Uncorrelated Channels \cite{neely03}, \cite{neely05inf}, \cite{tass93}]\label{def:uncorr_ch}
\emph{The process $C_i(t)$, $i \in \{1,2\}$, is in ON state with
probability (w.p.) $p_i$ and in OFF state w.p. $1-p_i$ at each time
slot independently from earlier slots and of the other queue.}
\end{definition}
\begin{definition}[Correlated Channels \cite{AhmadTaraKrish09}, \cite{LiNeely10},
\cite{WangChang96}, \cite{ZorziRao95}]\label{def:corr_ch} \emph{The
process $C_i(t)$, $i \in \{1,2\}$, follows the two-state Markov
chain (i.e., the symmetric Gilbert-Elliot channel model) with transition
probability $\epsilon$ as shown in Fig. \ref{Fig:channel_MC}
independently of the other queue.}
\end{definition}
G-E channel model 
has been widely accepted in modeling and
analysis
of wireless systems \cite{AhmadTaraKrish09}, \cite{LiNeely10}, 
\cite{WangChang96}, \cite{ZorziRao95}, \cite{ZorziRao97}.
Note that our results and algorithms are applicable to general
non-symmetric channel models, but here we present the symmetric case
for ease of exposition.

Let $\mathbf{Q}(t)=(Q_1(t),Q_2(t))$ be the queue lengths at time
slot $t$. We assume that $\mathbf{Q}(t)$ and $\mathbf{C}(t)$ are
known to the
server at the beginning of each time slot.  
Let $a_t \in \{0,1\}$ denote the action taken at slot $t$, where 
$a_t=1$ if the server stays with the current queue and $a_t=0$ if it
switches to the other queue. One packet is successfully received
from queue $i$ at time slot $t$, if $m(t)=i$,
$a_t=1$ and $C_i(t)=1$. 

%
%
%
%
%
%
%
%
\begin{definition}[Strong Stability]\label{def:stab}
\emph{A queue is called strongly stable if :}
\begin{equation*}
\displaystyle \limsup_{t\rightarrow \infty} \frac{1}{t}
\sum_{\tau=0}^{t-1} \mathbb{E}[Q(\tau)] < \infty.
\end{equation*}
\end{definition}
In addition, the system is called strongly stable (or stable for
simplicity) if both queues are stable. 
%
\begin{definition}[Stability Region]\label{def:throughput_region}
\emph{The stability region $\mathbf{\Lambda}$ is the set of all
arrival rate vectors $(\lambda_1,\lambda_2)$ such that there exists
a control algorithm that stabilizes both queues in the system.}
\end{definition}
%
The $\mathbf{\delta}$-stripped stability region is defined for some
$\delta >0$ as $\mathbf{\Lambda}^{\delta} \triangleq \Big\{
(\lambda_1,\lambda_2)| (\lambda_1+\delta, \lambda_2+\delta) \in
\mathbf{\Lambda} \Big\}.$ A policy is said to achieve
$\gamma$-fraction of $\mathbf{\Lambda}$, if it stabilizes the system
for all input rates inside $\gamma \mathbf{\Lambda}$. A
throughput-optimal policy achieves
$\gamma=100\%$ of the stability region. 

%
%
%

%
%

\begin{figure}
\centering \psfrag{2\r}[l][][1]{$\!\!\epsilon$}
\psfrag{6\r}[l][][1]{$\epsilon$} \psfrag{4\r}[l][][0.85]{\!\!\!\!ON}
\psfrag{5\r}[l][][0.85]{\!\!\!\!\!OFF}
\psfrag{1\r}[l][][1]{\!\!\!\!\!$\!\!1-\epsilon$}
\psfrag{3\r}[l][][1]{\!\!\!\!\!$\!\!\!\!1-\epsilon$}
\includegraphics[width=0.23\textwidth]{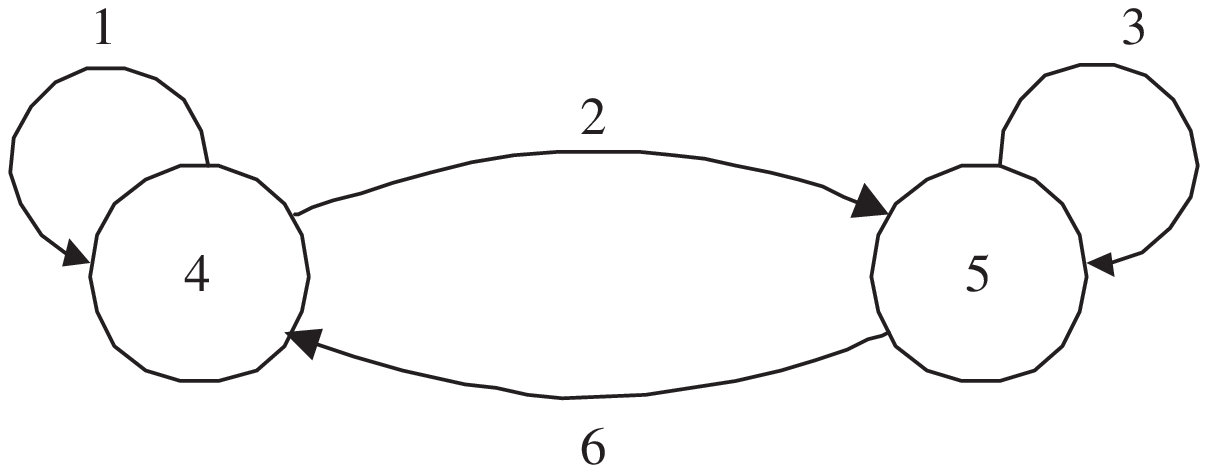}
\vspace{-2mm} \captionsetup{font=footnotesize}\caption{Markov
modulated ON/OFF channel process. We have $\epsilon \le 0.5$ for
positive correlation.}\label{Fig:channel_MC}
\end{figure}

\section{Motivation-Uncorrelated Channels}\label{Sec:Motiv}
In this section we show that there is no diversity gain when the
channel processes are i.i.d. over time and that channel correlation
over time is necessary in order to take advantage of the diversity
gain and enlarge the throughput region. Specifically, we show that
when the channel processes are i.i.d. over time, the stability
region is reduced considerably with respect to the no-switchover
time case, and no policy can achieve a stability region larger than
that of the simple Exhaustive or Gated type policies. Gated policy
is such that the server serves all the packets that were present at
the queue at the time of arrival and then switches to the other
queue. In the Exhaustive policy, the server does not leave the
current queue until it empties.

Assume the channel processes $C_1(t)$ and $C_2(t)$ are as described
in Definition \ref{def:uncorr_ch}. We first derive a necessary
condition on the stability of the system and then show the
sufficiency of this condition by proving that gated policy
stabilizes the system under this condition.
\begin{theorem}\label{thm:iid_ness}
\emph{A necessary condition on stability is given by:}
\begin{equation}\label{eq:Stab_Necess}
\displaystyle \rho = \frac{\lambda_1}{p_1}  + \frac{\lambda_2}{p_2}
< 1.
\end{equation}
\end{theorem}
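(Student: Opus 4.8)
The plan is to upper bound the throughput any policy can deliver to each queue by a quantity depending only on how the single server divides its time between the two queues, and then use that these time fractions sum to one. Fix an arbitrary (possibly randomized, history-dependent) policy and a horizon $T$; let $R_i(T)$ be the number of packets delivered from queue $i$ during slots $0,\dots,T-1$, and let $M_i(T)=\sum_{t=0}^{T-1}\mathbf{1}\{m(t)=i\}$ count the slots in which the server is located at queue $i$. Since a departure from queue $i$ in slot $t$ requires $m(t)=i$, $a_t=1$ and $C_i(t)=1$, I first discard the staying condition to get the sample-path bound $\mathbf{1}\{\text{departure from }i\text{ at }t\}\le \mathbf{1}\{m(t)=i\}\,C_i(t)$. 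The point of discarding $a_t$ is that it lets me sidestep the awkward dependence of the stay/switch decision on the current channel.

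Now the crux: $\{m(t)=i\}$ is determined by the initial position and the actions $a_0,\dots,a_{t-1}$, hence is a function of $\mathbf{C}(0),\dots,\mathbf{C}(t-1)$, the past arrivals, and the policy's randomness; under Definition~\ref{def:uncorr_ch} the current connectivity $C_i(t)$ is independent of all of this. Therefore $m(t)$ and $C_i(t)$ are independent and $\mathbb{E}\big[\mathbf{1}\{m(t)=i\}\,C_i(t)\big]=p_i\,\mathbb{P}(m(t)=i)$, so that $\mathbb{E}[R_i(T)]\le p_i\,\mathbb{E}[M_i(T)]$. This is exactly where the i.i.d.-in-time hypothesis enters, and it is also why correlated channels are expected to enlarge the region: there $m(t)$ depends on $\mathbf{C}(t-1)$, which is correlated with $C_i(t)$. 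Because the server occupies exactly one queue each slot, $M_1(T)+M_2(T)=T$, and dividing by $p_i$ and summing gives
\begin{equation*}
\frac{\mathbb{E}[R_1(T)]}{p_1}+\frac{\mathbb{E}[R_2(T)]}{p_2}\;\le\;\mathbb{E}[M_1(T)]+\mathbb{E}[M_2(T)]=T .
\end{equation*}

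To pass to arrival rates I would use flow conservation $\mathbb{E}[Q_i(T)]=Q_i(0)+\lambda_i T-\mathbb{E}[R_i(T)]$ to substitute $\mathbb{E}[R_i(T)]=\lambda_i T+Q_i(0)-\mathbb{E}[Q_i(T)]$ into the display, which rearranges to $(\rho-1)\,T\le \sum_i \mathbb{E}[Q_i(T)]/p_i-\kappa_0$ with $\kappa_0=\sum_i Q_i(0)/p_i$. Strong stability (Definition~\ref{def:stab}) keeps the running time average of $\sum_i\mathbb{E}[Q_i(T)]/p_i$ bounded, which forces $\liminf_{T}\frac1T\sum_i\mathbb{E}[Q_i(T)]/p_i=0$; evaluating the inequality along such a subsequence yields $\rho\le 1$.

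The step I expect to be the real obstacle is upgrading $\rho\le1$ to the strict $\rho<1$, and the flow argument above provably cannot do it: the boundary $\rho=1$ is attainable in rate by the policy that stays whenever its current channel is ON and switches otherwise, which wastes no connected slot and meets the displayed inequality with equality. Hence strictness cannot come from any policy-independent flow bound; it must instead come from the fact that $\rho=1$ is a critically loaded regime, in which the queue behaves like a null-recurrent reflected random walk whose time-average length diverges (on the diffusion scale $\sqrt{T}$), contradicting strong stability. The finite second-moment assumption $\mathbb{E}[A_i^2(t)]\le A_{\max}^2$ is precisely what is needed to make this critical-load lower bound rigorous, so I would establish $\rho\le1$ as above and then exclude $\rho=1$ by a separate random-walk argument.
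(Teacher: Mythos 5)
Your argument is correct in its main thrust but follows a genuinely different route from the paper's. The paper proves necessity by reduction to classical polling theory: under i.i.d. connectivity the time for the server, once at queue $i$, to see an ON slot is geometric with mean $1/p_i$, so the system is dominated (sample-path-wise, by letting the server idle) by a polling system with \emph{zero} switchover times and i.i.d. service times of mean $1/p_i$, whose stability is known to require $\rho<1$ (the paper cites Walrand for this). Your proof is instead direct and self-contained: the sample-path bound $\mathbf{1}\{\mbox{departure from }i\mbox{ at }t\}\le\mathbf{1}\{m(t)=i\}\,C_i(t)$, the key observation that $m(t)$ is determined by the history up to slot $t-1$ and is therefore independent of $C_i(t)$ under Definition~\ref{def:uncorr_ch}, the accounting identity $M_1(T)+M_2(T)=T$, and then flow conservation combined with strong stability. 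This buys several things the paper's proof leaves implicit: it applies verbatim to arbitrary randomized, history-dependent policies without formalizing the coupling to a polling model; it pinpoints exactly where the i.i.d. hypothesis enters (the independence of $m(t)$ and $C_i(t)$), which also explains correctly why channel correlation can enlarge the region; and it avoids invoking an external stability theorem for the interior case. What the paper's route buys, conversely, is the strict inequality for free, since the cited classical result already excludes $\rho=1$.

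On that last point you are right to flag the obstacle, and your diagnosis is sharp: the "stay iff ON, else switch" policy meets your bound with equality, so no policy-uniform flow inequality can yield strictness, and the exclusion of $\rho=1$ must come from critical-load (null-recurrence) behavior of the queues. However, in your write-up this step remains a sketch, so as submitted your proof establishes stability $\Rightarrow\rho\le 1$ rather than the theorem's $\rho<1$. To be fair, the paper does not prove the boundary case either---it inherits strictness from the citation---so your proposal is at a comparable level of rigor there, and is arguably more rigorous on the case $\rho>1$. If you want the theorem exactly as stated without citing a classical result, you would need to carry out the critical-load argument: show that at $\rho=1$ the workload $\sum_i Q_i(t)/p_i$ has nonnegative drift and non-degenerate increment variance (the channel randomness alone supplies this when $p_i<1$), so that its running time-average expectation diverges, contradicting Definition~\ref{def:stab}.
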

The proof for a more general system is given in Appendix A. 
Since both queues have memoryless channels, for any received packet
from queue-$i$, as soon as the server switches to queue $i$, the
expected time to ON state is $1/p_i$. Namely, the time to ON state
is a geometric random variable with parameter $p_i$. Hence, the
effect of i.i.d. connectivity is such that this geometric random
variable is essentially the ``service time per packet'' for
queue-$i$. Note that we call the term $\lambda_1/p_1+\lambda_2/p_2$
the system load, $\rho$, since it is the rate with which the work is
entering the system in the form of service slots.
%
%
In a multiuser single-server system \emph{with or without switchover
times}, with stationary arrivals whose average arrival rates are
$\lambda_i, i \in \{1,2\}$, and i.i.d. service times independent of
the arrivals with average service times $1/p_i, i \in \{1,2\}$, a
necessary condition for stability is given by the system load,
$\rho$, less than 1. To see this, the stability region of the
polling system with zero switchover times is an upperbound on the
stability region of the corresponding system with nonzero switchover
times. Finally, a necessary condition for the stability of the
former system is $\rho = \lambda_1/p_1 + \lambda_2/p_2 <1$, (e.g.,
\cite{Walrand}).
%
Next we show that the stability condition in (\ref{eq:Stab_Necess})
is also sufficient.\\
\emph{Gated Policy:}\\ \emph{Serve all the packets that are present
at a queue upon arrival at the queue.}
\begin{theorem}\label{thm:iid_suff}
{Gated policy together with cyclic order of service for the server
stabilizes the system as long as $\rho < 1$.}
\end{theorem}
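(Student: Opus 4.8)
The plan is to analyze the Markov chain embedded at the \emph{polling instants} of queue~1, i.e., the slots $t_0 < t_1 < \cdots$ at which the server arrives at queue~1 to begin a fresh cycle of the cyclic schedule (serve queue~1 under the gate, switch, serve queue~2 under the gate, switch back). Because the channels are memoryless (Definition~\ref{def:uncorr_ch}), the state $\mathbf{X}^{(n)} = (Q_1(t_n), Q_2(t_n))$ together with the i.i.d.\ future arrivals and channel realizations determines the law of the next cycle, so $\{\mathbf{X}^{(n)}\}$ is a Markov chain and no channel state need be carried. The first fact I would record is that, while the server stays at queue~$i$ and keeps serving, each slot clears one packet with probability $p_i$ independently; hence the service time of one packet is geometric with mean $1/p_i$, and clearing a gate of $k$ packets takes mean time $k/p_i$.

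Next I would compute the one-step conditional drift of $\mathbf{X}^{(n)}$. Under gated service the packets cleared at queue~1 during cycle~$n$ are exactly the $Q_1(t_n)$ present at $t_n$, so $Q_1(t_{n+1})$ equals the queue-1 arrivals over the whole cycle of (random) length $T^{(n)} = S_1^{(n)} + 1 + S_2^{(n)} + 1$, while $Q_2(t_{n+1})$ equals the queue-2 arrivals over only the final service-plus-switchover interval. Taking conditional expectations and substituting the geometric service means gives $\mathbb{E}[\mathbf{X}^{(n+1)}\mid \mathbf{X}^{(n)}] = M\,\mathbf{X}^{(n)} + \mathbf{b}$ for a constant vector $\mathbf{b}$ and a nonnegative matrix $M$ whose entries are explicit in $\rho_1 := \lambda_1/p_1$ and $\rho_2 := \lambda_2/p_2$ (the coupling is triangular: queue~1's backlog inflates the queue-2 gate through the arrivals accrued during $S_1^{(n)}$). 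The key algebraic fact I expect to exploit is that the characteristic polynomial of $M$ evaluated at $1$ equals $1-\rho$; combined with $\operatorname{tr} M < 2$, this shows via Perron--Frobenius (for the nonnegative $M$) that the spectral radius $r:=\rho(M)$ is strictly less than $1$ precisely when $\rho < 1$.

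With $r < 1$ I would take the strictly positive left Perron eigenvector $\mathbf{w}$ of $M$ and use the linear Lyapunov function $V(\mathbf{X}) = \mathbf{w}^\top \mathbf{X}$, for which $\mathbb{E}[V(\mathbf{X}^{(n+1)})\mid \mathbf{X}^{(n)}] = r\,V(\mathbf{X}^{(n)}) + \mathbf{w}^\top\mathbf{b}$, giving negative drift outside a finite set and hence positive recurrence of the embedded chain by the Foster--Lyapunov criterion, along with a finite bound on $\mathbb{E}_\pi[V]$. To upgrade this to strong stability in the sense of Definition~\ref{def:stab} I would pass from the embedded chain to slot time by a renewal-reward argument: the time-average expected backlog equals $\mathbb{E}_\pi\big[\sum_{\tau \in \text{cycle}} Q(\tau)\big]/\mathbb{E}_\pi[T^{(n)}]$, where the denominator is at least the two switchover slots and the numerator is bounded in terms of $\mathbf{X}^{(n)}$, $T^{(n)}$ and the within-cycle arrivals.

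I expect the main obstacle to be this last conversion rather than the drift computation: the within-cycle integral of the queue length is essentially \emph{quadratic} in the state (cycle length times peak backlog), so bounding it requires finite \emph{second} moments of both $\mathbf{X}^{(n)}$ and $T^{(n)}$. I would obtain these by repeating the drift analysis with the quadratic Lyapunov function $\big(\mathbf{w}^\top\mathbf{X}\big)^2$, whose cross terms are controlled exactly because $r<1$ and because the arrival second moments are bounded by the standing assumption $\mathbb{E}[A_i^2(t)] \le A_{\max}^2$; the geometric service times likewise have finite second moments, keeping $\mathbb{E}_\pi[(T^{(n)})^2]$ finite. Assembling these moment bounds in the renewal-reward ratio then yields the finite time-average backlog required for strong stability whenever $\rho<1$.
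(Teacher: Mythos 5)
Your proposal is correct, and while it shares the paper's starting point---analyzing the Markov chain embedded at the polling instants and applying a Lyapunov drift criterion there---its key technical steps are genuinely different from the paper's proof (Appendix B). The paper uses the scalar workload function $\sum_i Q_i(T_m)/p_i$, computes a per-visit drift equal to $\rho D_{I(m)}-(1-\rho)Q_{I(m)}(T_m)/p_{I(m)}$, and telescopes it over the $N$ visits of a cycle (using that an unserved queue only grows), obtaining negative drift once the workload exceeds $\rho D/(1-\rho)$; that argument is stated for general $N$ queues and arbitrary travel times. You instead set up the exact one-cycle affine recursion $\mathbb{E}[\mathbf{X}^{(n+1)}\mid\mathbf{X}^{(n)}]=M\mathbf{X}^{(n)}+\mathbf{b}$ and reduce everything to the spectral radius of $M$; your algebra checks out, since with $M_{11}=\rho_1(1+\rho_2)$, $M_{12}=\lambda_1/p_2$, $M_{21}=\lambda_2\rho_2/p_1$, $M_{22}=\rho_2$ one gets $\det(I-M)=\bigl(1-\rho_1(1+\rho_2)\bigr)(1-\rho_2)-\rho_1\rho_2^2=1-\rho$ and $\operatorname{tr}M=\rho+\rho_1\rho_2<2$, so Perron--Frobenius indeed gives spectral radius strictly less than $1$ exactly when $\rho<1$. (One slip: $M$ is \emph{not} triangular---both off-diagonal entries are positive, since $Q_2$ lengthens the cycle seen by queue~1 while $Q_1$ inflates queue~2's gate---but your argument never uses triangularity.) What your route buys is an exact contraction with explicit Perron weights and a clean path to stationary moment bounds; what it costs is that the spectral computation is tailored to two queues, whereas the paper's workload drift generalizes verbatim. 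Most notably, your treatment of the embedded-to-slot-time conversion is \emph{more careful than the paper's}: the paper asserts $Q(t)\le Q(T_{m+1})$ for $t\in(T_m,T_{m+1})$ on the grounds that no service occurs in between, which is false for the queue being served under gated service (its mid-cycle backlog contains the gated packets, while its end-of-cycle backlog consists only of within-cycle arrivals), and it also ignores the length-biasing of the cycle containing a typical slot. Your renewal-reward argument, with second moments of the embedded state and cycle length supplied by the quadratic Lyapunov function $(\mathbf{w}^{\top}\mathbf{X})^2$, the bound $\mathbb{E}[A_i^2(t)]\le A_{\max}^2$, and the finite moments of the geometric service times, is precisely what is needed to make that final step rigorous, so the extra work you anticipate there is not superfluous.
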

The proof for a more general system is given in Appendix B. It is
based on a Lyapunov stability argument over a cycle duration.
Namely, we let $m$ be the discrete time index for the $m$th time the
server stops for servicing a queue and let $T_m$ be the time slot
number of this server-queue meeting times 
Let $I(m)$ be the i.d. of the node that the mobile serves at time
$T_m$ and let $S(Q_{I(m)}(T_m))$ be the service time required to
serve $Q_{I(m)}(T_m)$ packets at time $T_{m}$. Under Gated service
the server serves all $Q_{I(m)}(T_m)$ messages, therefore,
$S(Q_{I(m)}(T_m))$ is the summation of $Q_{I(m)}(T_m)$ independent
geometric random variables of parameter $p_{I(m)}$.
We have the following queue evolution:
\begin{equation}\label{eq:queue evol}
\displaystyle \sum_{i=1}^2 \!Q_{i}(T_{m+1}) \!=\!\! \sum_{i=1}^2
\!Q_{i}(T_m) +\! \sum_{i=1}^2 \!\!
\!\sum_{\,\,\,\tau=T_m}^{\;T_{m+1}-1} \!\!\!\!\!A_{i}(t) \! -
\!Q_{I(m)}(T_m)
\end{equation}
where $T_{m+1}= S(Q_{I(m)}(T_m))+ 1$ with additional $1$ due to
switchover delay. We use the following linear Lyapunov function:
\begin{equation}\label{eq:gated_lyap}
\displaystyle L(\mathbf{Q}(T_{m})) = \sum_{i=1}^2
\frac{Q_i(T_{m})}{p_i}.
\end{equation}
Given the current queue sizes, this Lyapunov function represents the
expected amount of service slots needed to serve the packets present
in both queues. We define the drift over one cycle as
\begin{equation*}
\Delta(T_m) \triangleq E \left\{ L(\mathbf{Q}(T_{m+2})) -
L(\mathbf{Q}(T_{m}))|\mathbf{Q}(T_m)\right\}.
\end{equation*}
Using (\ref{eq:queue evol}) and (\ref{eq:gated_lyap}) one can show
that the drift over the cycle is negative if
\begin{equation}\label{eq:temp7}
\displaystyle \sum_{i=1}^2 \frac{Q_{i}(T_m)}{p_i} >
\rho\frac{2}{1-\rho}.
\end{equation}
To understand the intuition behind this condition, 
first note that $\frac{2}{1-\rho}$ is the expected cycle time in the
system in steady state (in general the expected cycle time is the
total travel time per cycle divided by $1-\rho$) \cite{Takagi}.
Hence, $\rho\frac{2}{1-\rho}$ denotes the expected increase in
system work load over one cycle. Therefore, (\ref{eq:temp7}) argues
that if $\sum_{i=1}^2 \frac{Q_{i}(T_m)}{p_i}$, a lower bound on the
expected decrease in system work load over one cycle, is greater
than the expected increase in system load over one cycle, then the
system is stable. Therefore, the throughput region of the system is
given by 
\begin{equation}\label{eq:Stab}
\mathbf{\Lambda} = \Big \{(\lambda_1,\lambda_2) \big|\;\;
\frac{\lambda_1}{p_1} + \frac{\lambda_2}{p_2}  \le 1 \Big\}.
\end{equation}
\begin{figure}
\centering \psfrag{1\r}[l][][0.95]{$0.5$}
\psfrag{2\r}[l][][0.95]{$\!\!\!\!0.5$}
\psfrag{3\r}[l][][1]{$\lambda_2$} \psfrag{4\r}[l][][1]{$\lambda_1$}
\psfrag{6\r}[l][][0.85]{$\!\!\!\!\!\!\!\!\!\!\!\!\!\!\!\!\!$\textrm{no-switchover
time}} \psfrag{9\r}[l][][0.85]{\textrm{i.i.d.}}
\psfrag{5\r}[l][][0.85]{$\!\!\!\!\!$\textrm{channels}}
\psfrag{7\r}[l][][.85]{$\!$\textrm{Markovian}}
\psfrag{8\r}[l][][.85]{$\!$\textrm{channels}}
\includegraphics[width=0.23\textwidth]{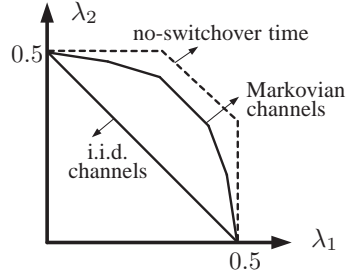}
\vspace{-2mm} \captionsetup{font=footnotesize}\caption{Stability
region under uncorrelated (i.i.d.) and correlated (Markovian)
channels with and without switchover
time.}\label{Fig:stab_region_two_links}
\end{figure}
For the case of two parallel queues, the throughput region of the
system without switchover delay analyzed in \cite{tass93},
$\mathbf{\Lambda}_{ns}$, is given by
\begin{equation}\label{eq:stab_tass}
\mathbf{\Lambda}_{ns} \!\!=\!\! \Big\{(\lambda_1,\lambda_2) \big |
\lambda_1 \le p_1, \lambda_2 \le p_2, \lambda_1 + \lambda_2 \le p_1
+ p_2(1-p_1)\Big \}.
\end{equation}
These two regions are displayed in Fig. \ref{Fig:stab_region_two_links} for the case of
$p_1=p_2=0.5$. 
The stability region of the system without switchover time shrinks
considerably when there is switchover delay. Note that for the case
of deterministic channels, ($C_1(t)=C_2(t)=1, \forall t$), the
systems with or without switchover times have the same stability
region $\lambda_1+\lambda_2 < 1$ \footnote{Throughput region of a
general N-queue polling system with stationary arrivals of rates
$\lambda_i,i \in {1,...,N}$, i.i.d. service processes of mean
service times $s_1,...,s_N$ and finite travel times $D_{ij}$ between
queues $i$ and $j$ is given by $\rho=\sum_{i=1}^N \lambda_i s_i<1$
(see e.g., \cite{Takagi}). Therefore, in the absence of random
connectivity, finite travel times do not affect the stability
region. To see this, considering the system under the optimal Gated
Policy, with arrival rates close to the boundary of the stability
region, the fraction of times the server spends receiving packets
dominates the fraction of time spent on travel.}. Therefore, there
is a significant throughput loss due to switchover delay when the
channel processes are i.i.d. over time. Therefore, \emph{it is the combination of switchover delay and
random connectivity that result in fundamental changes in system
behavior}. 

\begin{remark}
Note that the results of this section hold for more general systems;
namely, for systems with $N$ queues and arbitrary
switchover times between the queues (switchover time from some
queue-$i$ to queue-$j$ given by a constant $d_{ij}\ge 1$ slots). The
stability region in this case is given by
$\lambda_1/p_1+...+\lambda_N/p_N \le 1$.
\end{remark}

With Markovian channels, it is clear that one can achieve better
throughput region than the i.i.d. channels case if the channels are
positively correlated over time. This is because we can exploit the
channel diversity when the channel states stay the same with high
probability. 
In the following, we show that indeed the throughput region
approaches the throughput region of no switchover time case in in
\cite{tass93} as the channels become more correlated over time. Note
that the throughput region in \cite{tass93} is the same for both
i.i.d. and Markovian channels under the condition that probability
of ON state for the $i.i.d.$ channels is the same as the steady
state probability of ON state for the two state Markovian channels.
This fact can be derived as a special case of the seminal work of
Neely in \cite{neely05}.

\section{Stability Region - Correlated Channels}\label{Sec:Stab_Reg}

In this and the following sections we analyze the system under
correlated channels assumption. Assume the channel processes
$C_1(t)$ and $C_2(t)$ are according to Definition
\ref{def:uncorr_ch}. We analytically derive an upper bound on the
throughput region of the system (necessary conditions on $\lambda_1$
and $\lambda_2$ for stability) via analyzing the corresponding
system with saturated queues. As we show in Section
\ref{Sec:Opt_Pol}, the necessary conditions derived in this section
are also sufficient and hence the region established in this section
is the throughput region of the system.

When switchover times are non-zero, channel
correlation impacts the stability region considerably. 
In particular, channel correlation can be exploited to improve the
throughput of the system. Moreover, as $\epsilon \rightarrow 0$, the
stability region tends to that achieved by the system with
no-switchover time and for $0<\epsilon<0.5$ it lies between the
stability regions corresponding to the two extreme cases $\epsilon
=0.5$ and $\epsilon \rightarrow 0$ as  shown in
Fig.~\ref{Fig:stab_region_two_links}.

We start by analyzing the corresponding system with saturated
queues, i.e., both queues are always non-empty. Let
$\mathbf{\Lambda}_s$ denote the set of all time average expected
departure rates that can be obtained from the two queues in the
saturated system under all possible policies that are possibly
history dependent, randomized and non-stationary. We will show that
$\mathbf{\Lambda}=\mathbf{\Lambda}_s$. We prove the necessary
stability conditions 
in the following Lemma and establish sufficiency in the next section.    

\vspace{0.1cm}
\begin{lemma}\label{lem:Lam_under_Lam_s} We have
\begin{equation*}
\mathbf{\Lambda} \subseteq \mathbf{\Lambda}_s.
\end{equation*}
\end{lemma}
\begin{proof}
Given a policy $\pi$ for the original system specifying the switch
and stay actions based possibly on observed channel and queue state
information, consider the saturated system with \emph{the same
sample path of channel realizations} for $t\in \{0,1,2,... \}$ and
\emph{the same set of actions} as policy $\pi$ at each timXe slot
$t\in \{0,1,2,... \}$. Let this policy for the saturated system be
$\pi'$. Let $D_i(t),i \in \{1,2\}$ be total number departures by
time $t$ from queue-$i$ in the original system under policy $\pi$
and let $D_i'(t),i \in \{1,2\}$ be the corresponding quantity for
the saturated system under policy $\pi'$. It is clear that $\lim_{t
\rightarrow \infty} (D_1(t)+D_2(t))/t \le 1$, where the same
statement also holds for the limit of $D_i'(t),i \in \{1,2\}$. Since
some of the ON channel states are wasted in the original system due
to empty queues, we have
\begin{equation}\label{eq:sat_dept_bound}
D_1(t) \le D_1'(t),\;\;\;\textrm{and},\;\;\;D_2(t)\le D_2'(t).
\end{equation}
Therefore, the time average expectation of $D_i(t),i \in \{1,2\}$ is
also less than or equal to the time average expectation of
$D_i'(t),i \in \{1,2\}$. This completes the proof since
(\ref{eq:sat_dept_bound}) holds under any policy $\pi$ for the
original system.
\end{proof}

Now, we derive the region $\mathbf{\Lambda}_s$ by formulating the
system dynamics as a Markov Decision Process (MDP). Let
$\mathbf{s}_t=(m(t),C_1(t),C_2(t)) \in S$ denote the system state at
time $t$ where $S$ is the set of all states. Also, let $a_t \in
A=\{0,1\}$ denote the action taken at time slot $t$ where $A$ is the
set of all actions at each state. 
Let $\mathbb{H}(t)=\mathbf{C}(\tau)|_{\tau=0}^{t}$ denote the full
history of the channel processes until time $t$. For a saturated
system, a policy is a mapping from $\mathbb{H}(t)$ to the set of all
probability distributions on actions $a_t \in \{0,1\}$. This
definition includes randomized policies that choose $a_t$ randomly
at a given state $\mathbf{s}_t$. A \emph{stationary} policy is a
policy that depends only on the current state. In each time slot
$t$, the server observes the current state $\mathbf{s}_t$ and
chooses an action $a_t$. Then the next state $j$ is realized
according to the transition probabilities $\mathbf{P}(j|s,a)$, which
depend on the random channel processes. Now, we define the reward
functions as follows: \beqn
r_1(s_t,a_t) \!\!\!\!\!&=& \!\!\!\!\!1 \textrm{ if } s_t\!\!=\!\!(1,1,1) \textrm{ or } s_t\!\!=\!\!(1,1,0) \textrm{, and } a_t\!\!=\!\!1 \label{eq:dept_rate1} \\
r_2(s_t,a_t) \!\!\!\!\!&=&\!\!\!\!\! 1 \textrm{ if }
s_t\!\!=\!\!(2,1,1) \textrm{ or } s_t\!\!=\!\!(2,0,1) \textrm{, and
} a_t\!\!=\!\!1, \label{eq:dept_rate} \eeqn and
$r_1(s_t,a_t)=r_2(s_t,a_t)=0$ otherwise. That is, a reward is
obtained when the server stays at an ON channel. We are interested
in the
set of all possible time average expected departure rates, 
therefore, given some
$\alpha_1,\alpha_2  \ge 0$, 
define the system reward at time $t$ as
$r(s_t,a_t) = \alpha_1r_1(s_t,a_t) + \alpha_2
r_2(s_t,a_t)$. 
The average reward of policy $\pi$ is defined as
\begin{equation*}
r^{\pi} = \displaystyle \lim_{K\rightarrow \infty} \frac{1}{K} E
\Big \{ \sum_{t=1}^K r(s_t,a_t^{\pi}) \Big \}.
\end{equation*}
 Given some $\alpha_1,\alpha_2 \ge 0$, we are interested in the policy that
achieves the maximum time average expected reward
$r^{*} = \max_{\pi} r^{\pi}$. 
This optimization problem is a discrete time 
MDP characterized by the state transition probabilities
$\mathbf{P}(j|s,a)$ with 8 states and 2 actions per state.
Furthermore, under every policy, the underlying Markov chain that
describes the system state evolution has a single recurrent class
plus possibly a set of transient states. Note that we eliminate the
policy that switches in all 8 states and achieves 0 total average
rate. Therefore this MDP belongs to the class of \emph{Unichain}
MDPs \cite{puterman05}.
%
%
%
%
%
For Unichain MDPs with finite state and action spaces, we can define
the \emph{state-action polytope}, $\mathbf{X}$, as the set of
16-dimensional vectors $\mathbf{x}$ that satisfy the balance
equations \beqn x(s;1) + x(s;0) = \!\!\sum_{s'}
\!\!\sum_{a\in\{0,1\}}\!\!\! \mathbf{P}\big(s|s',a\big)x(s';a) ,
\;\forall\; s \in S, \label{eq:saf_balance} \eeqn the normalization
condition \beqn \sum_{s}x(s;1) + x(s;0)=1,  \label{eq:saf_norm}
\eeqn and the nonnegativity constraints \beqn x(s;a)\ge 0,  \:
\mbox{for} \: s \in S, a\in A. \label{eq:saf_nonneg} \eeqn Note that
$x(s;1)$
can be interpreted as the stationary
probability that action
\emph{stay} 
is taken at state $s$. 
More precisely, a point $\mathbf{x}\in\mathbf{X}$ 
corresponds to a randomized stationary policy 
that takes action $a \in \{0,1\}$ 
at state $s$ w.p.
\begin{equation}\label{eq:rand_stat}
\,\!\mathbf{P}(\textrm{\emph{action $a$} at state$\,s$})\!=\!
\frac{x(s;a)}{x(s;1)+x(s;0)}, a \in A,s\in S_x, \!\!\!\!\!\!\!\!\!
\end{equation}
where $S_x$ is the set of 
recurrent states  given by
$S_x\equiv \displaystyle \{ s\in S: x(s;1)+x(s;0)>0\},$
and actions are arbitrary for transient states $s \in S/S_x$ \cite[Theorem 8.8.6]{puterman05}. 
Furthermore, every policy 
has a unique limiting average state action
frequency 
in $\mathbf{X}$ regardless of the initial
state distribution  \cite[Theorem 8.9.3]{puterman05}. Therefore,
given \emph{any} policy, there exists a stationary randomized policy
with the same
limiting state action frequencies \cite{puterman05}. 
%
%
%
These facts imply that when searching for the optimal policies, one
can restrict attention to stationary randomized policies as in
(\ref{eq:rand_stat}) for
$\mathbf{x}\in\mathbf{X}$. 

The following linear transformation of the state-action polytope
$\mathbf{X}$ defines 
the \emph{reward
polytope} \cite{shie05}:
%
%
$\{(\overline{r}_1,\overline{r}_2)\big| \overline{r}_1=
\mathbf{x}.\mathbf{r}_1, \overline{r}_2= \mathbf{x}.\mathbf{r}_2,
\mathbf{x} \in \mathbf{X}\}$, where $(.)$ denotes the vector inner
product and $\mathbf{r}_1$ and $\mathbf{r}_2$ are the 16-dimensional
reward functions defined in (\ref{eq:dept_rate1}) and
(\ref{eq:dept_rate}). This polytope is the set of all time average
expected departure rate pairs that can be obtained in the saturated
system, i.e., it is the rate region $\mathbf{\Lambda}_s$. An
explicit way of deriving
$\mathbf{\Lambda}_s$ is given in Algorithm \ref{alg:LP_for_Lambda_s}. 

\begin{algorithm}[h]
\caption{\emph{Stability Region Characterization}}
\label{alg:LP_for_Lambda_s}
\begin{algorithmic}[1]

\STATE Given $\alpha_1,\alpha_2 \ge 0$, solve the following Linear
Program
\begin{eqnarray}
\max_{x}  \quad  \alpha_1 \overline{r}_1 +  \alpha_2 \overline{r}_2  \nonumber \\
\mbox{subject to} \quad  \mathbf{x} \in \mathbf{X}. \label{eq:LP}
\end{eqnarray}

\STATE For a given $\alpha_2/\alpha_1$ ratio, the optimal solution
$(\overline{r}_1^{*}, \overline{r}_2^{*})$ of the LP in
(\ref{eq:LP}) gives one of the corner points of 
$\mathbf{\Lambda_s}$. Find all possible corner points and take their
convex combination.
\end{algorithmic}
\end{algorithm}
%
%
%
%
The following lemma is useful for finding the solutions of the above
LP for all possible $\alpha_2/\alpha_1$ ratios 
\cite[Corollary~8.8.7]{puterman05}.
%
\begin{lemma}
\emph{Suppose $\mathbf{x}$ is a vertex for the LP in (\ref{eq:LP}),
then the stationary randomized policy corresponding to $\mathbf{x}$,
as defined in (\ref{eq:rand_stat}), is a deterministic policy.
Conversely, for any stationary deterministic policy, the stationary
distribution of states induced by the policy is a vertex for the LP
in (\ref{eq:LP})}.
\end{lemma}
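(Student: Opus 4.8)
The plan is to prove the two directions of this correspondence between vertices of the state-action polytope $\mathbf{X}$ and stationary deterministic policies separately, leaning on the structure of the balance equations (\ref{eq:saf_balance})--(\ref{eq:saf_nonneg}) and the Unichain assumption. Recall that a vertex of a polytope in $\mathbb{R}^{16}$ is a feasible point at which $16$ linearly independent constraints are active. The defining system consists of the $8$ balance equations (\ref{eq:saf_balance}), the single normalization equation (\ref{eq:saf_norm}), and the $16$ nonnegativity constraints (\ref{eq:saf_nonneg}). Since the balance equations are linearly dependent (they always sum to a multiple of the normalization identity), the effective number of independent equalities is small, so at a vertex a large number of the inequalities $x(s;a)\ge 0$ must be tight. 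The goal of the forward direction is to show that this forces at most one of $x(s;0),x(s;1)$ to be positive for each state $s$, which is exactly determinism under the decoding (\ref{eq:rand_stat}).

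For the forward direction, I would argue as follows. Suppose $\mathbf{x}$ is a vertex and, for contradiction, that there is some recurrent state $s_0\in S_x$ with both $x(s_0;0)>0$ and $x(s_0;1)>0$. The idea is to construct a nonzero perturbation direction $\mathbf{d}$ such that $\mathbf{x}\pm\epsilon\mathbf{d}$ both remain in $\mathbf{X}$ for small $\epsilon>0$, contradicting that $\mathbf{x}$ is an extreme point. Concretely, a randomized stationary policy with two distinct actions randomized at a recurrent state can be written as a strict convex combination of the two policies obtained by committing to each action at $s_0$ while keeping the randomization elsewhere; under the Unichain property each of those two modified policies has its own well-defined stationary state-action frequency vector in $\mathbf{X}$, and $\mathbf{x}$ is a nontrivial convex combination of them. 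That convex decomposition is precisely the perturbation showing $\mathbf{x}$ is not extreme. Hence at every recurrent state at most one action carries positive frequency, so the decoded policy (\ref{eq:rand_stat}) puts probability one on a single action; fixing arbitrary actions at the transient states then yields a stationary deterministic policy, as the cited \cite[Theorem 8.8.6]{puterman05} permits.

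For the converse, I would start from a stationary deterministic policy $\delta$. Under $\delta$ the state evolution is a Markov chain with transition matrix $\mathbf{P}_\delta$, which by the Unichain assumption has a single recurrent class and hence a unique stationary distribution $\mu_\delta$. I then define $\mathbf{x}$ by setting $x(s;a)=\mu_\delta(s)$ when $a=\delta(s)$ and $x(s;a)=0$ otherwise, and verify directly that this $\mathbf{x}$ satisfies the balance equations (\ref{eq:saf_balance}), the normalization (\ref{eq:saf_norm}), and nonnegativity (\ref{eq:saf_nonneg}), so $\mathbf{x}\in\mathbf{X}$. To see it is a vertex, I would count active constraints: since $x(s;a)=0$ for the non-chosen action at every one of the $8$ states, $8$ of the nonnegativity constraints are tight, and for the transient states (those with $\mu_\delta(s)=0$) both coordinates vanish, adding more tight inequalities; combined with the $8$ balance equalities and the normalization, this yields enough linearly independent active constraints to pin down $\mathbf{x}$ uniquely. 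Equivalently, and more cleanly, I would observe that if $\mathbf{x}$ were a strict convex combination $\mathbf{x}=\theta\mathbf{y}+(1-\theta)\mathbf{z}$ with $\mathbf{y},\mathbf{z}\in\mathbf{X}$, then since $x(s;a)=0$ forces $y(s;a)=z(s;a)=0$ by nonnegativity, both $\mathbf{y}$ and $\mathbf{z}$ would be supported only on the chosen actions, hence correspond to the same deterministic policy $\delta$ and thus to the same unique stationary distribution $\mu_\delta$, giving $\mathbf{y}=\mathbf{z}=\mathbf{x}$; therefore $\mathbf{x}$ is extreme.

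The main obstacle, and the step I would be most careful about, is the forward direction: rigorously producing the two modified stationary frequency vectors and confirming they lie in $\mathbf{X}$ with $\mathbf{x}$ a \emph{strict} convex combination of them. The subtlety is that committing to one action at $s_0$ can in principle change the recurrence structure (for instance, a state that was recurrent might become transient, or vice versa), so one must invoke the Unichain hypothesis to guarantee that each modified policy still induces a unichain Markov chain with a genuine stationary distribution, and check that the mixing weight recovering $\mathbf{x}$ is strictly between $0$ and $1$ precisely because both $x(s_0;0)$ and $x(s_0;1)$ were assumed positive. Once this perturbation is constructed carefully, both directions follow, and the whole statement is in fact the content of \cite[Corollary 8.8.7]{puterman05}, so I would cross-check each step against that reference.
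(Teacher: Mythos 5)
Your proposal is correct, and it takes a genuinely different route from the paper. The paper attributes the lemma to \cite[Corollary~8.8.7]{puterman05} and gives only a constraint-counting sketch (under the simplification that all states are recurrent): the equalities (\ref{eq:saf_balance})--(\ref{eq:saf_norm}) have rank $8$, so a vertex of the $16$-dimensional polytope needs at least $8$ active nonnegativity constraints, i.e.\ one zero coordinate per state, which is exactly determinism; conversely, a deterministic policy's frequency vector has $8$ zero coordinates, which together with the $8$ independent equalities supplies the $16$ linearly independent active constraints required for a vertex. You argue both directions by extreme-point reasoning instead: for (vertex $\Rightarrow$ deterministic), a strict convex decomposition of $\mathbf{x}$ into the frequency vectors $\mathbf{x}_0,\mathbf{x}_1$ of the two policies obtained by committing to each action at a randomizing recurrent state $s_0$; for (deterministic $\Rightarrow$ vertex), the support argument plus uniqueness of the stationary distribution. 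Your converse is actually tighter than the paper's sketch, since it sidesteps having to check that the tight coordinate constraints are linearly independent of the balance equations: any convex decomposition of $\mathbf{x}$ is forced onto the same support, hence solves the stationary equations of the same unichain matrix $\mathbf{P}_\delta$, whose normalized solution is unique (the invariant-measure space of a unichain finite chain is one-dimensional), so both summands equal $\mathbf{x}$. The real work in your approach sits exactly where you flag it: the frequency vector is a rational, not linear, function of the randomization probability $\beta$ at $s_0$, so $\mathbf{x}=\theta\mathbf{x}_1+(1-\theta)\mathbf{x}_0$ is not term-by-term obvious and $\theta\neq\beta$ in general. The standard way to finish is a regeneration argument: since $s_0$ is recurrent under $\mathbf{x}$, cycles between successive visits to $s_0$ are i.i.d., each cycle is an action-$1$ cycle w.p.\ $\beta$ and an action-$0$ cycle otherwise, and renewal-reward gives $\theta=\beta T_1/\bigl(\beta T_1+(1-\beta)T_0\bigr)\in(0,1)$, where $T_i$ is the mean return time to $s_0$ given action $i$; this simultaneously shows that $s_0$ stays recurrent under both committed policies and that $\mathbf{x}_0\neq\mathbf{x}_1$ (since $x_1(s_0;1)>0=x_0(s_0;1)$), which you need for a genuine contradiction to extremality. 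One stray remark to fix: the balance equations (\ref{eq:saf_balance}) sum to the trivial identity $0=0$ (the all-ones vector lies in their left null space), not to a multiple of the normalization equation; nothing in your argument depends on this.
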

\vspace{0.1cm}

The intuition behind this lemma is as follows. For simplicity assume
all states are recurrent. 
Note that the more general case can be argued similarly. 
Now suppose $\mathbf{x} \in \mathbf{X}$, i.e.,
$\mathbf{x}(s,a) \ge 0,\, \mathbf{s}\in S,\, a\in A$, and
$\mathbf{x}$ satisfies all the equality constraints in
(\ref{eq:saf_balance}) and (\ref{eq:saf_norm}) out of which only 8
are linearly independent. For a 16 dimensional vector $\mathbf{x}
\in \mathbf{X}$ to be a vertex, we need to have at least 16 linearly
independent active constraints at $\mathbf{x}$.
%
%
%
%
%
If $\mathbf{x}$ corresponds to a deterministic policy, then either
$\mathbf{x}(s,1)$ or $\mathbf{x}(s,0)$ has to be zero. This gives at
least $|S|=8$ more linearly independent active constraints at
$\mathbf{x}$, satisfying the vertex condition.
%

Therefore, \emph{the corners of the rate polytope
$\mathbf{\Lambda_s}$ are given by stationary deterministic
policies}. There are a total of $2^8$ stationary deterministic
policies since we have $8$ states and $2$ actions per state. Hence,
finding the rate pairs corresponding to the 256 deterministic
policies and taking their convex combination gives
$\mathbf{\Lambda}_s$. Fortunately, we do not have to go through this
tedious procedure. The fact that at a vertex of (\ref{eq:LP}) either
$x(s;1)$ or $x(s;0)$ has to be zero for each $\mathbf{s} \in
\mathbf{S}$ provides a useful guideline for analytically solving
this LP. 
The following theorem, proved in Appendix C, is based on this solution to find the corners
of $\mathbf{\Lambda_s}$ and then applying
Algorithm~\ref{alg:LP_for_Lambda_s}. 
It is one of key results of this paper
characterizing the stability region explicitly.
\vspace{0.1cm}
\begin{theorem}\label{thm:stab}
\emph{The rate region $\mathbf{\Lambda}_s$ is the set of all arrival
rates $\lambda_1\ge 0$, $\lambda_2 \ge 0$ that for $\epsilon <
\epsilon_c = 1-\sqrt{2}/2$ satisfy} \beqn
\epsilon\lambda_1 + (1-\epsilon)^2\lambda_2 &\le& \frac{(1-\epsilon)^2}{2}\nonumber\\
(1-\epsilon)\lambda_1 + (1+\epsilon-\epsilon^2)\lambda_2  &\le& \frac{3}{4}-\frac{\epsilon}{2}\nonumber\\
\lambda_1 + \lambda_2  &\le& \frac{3}{4}-\frac{\epsilon}{2}\nonumber\\
(1+\epsilon-\epsilon^2)\lambda_1 + (1-\epsilon)\lambda_2  &\le& \frac{3}{4}-\frac{\epsilon}{2}\nonumber\\
(1-\epsilon)^2\lambda_1 + \epsilon\lambda_2 &\le&
\frac{(1-\epsilon)^2}{2},\nonumber 
\eeqn
 \emph{and for $\epsilon \ge \epsilon_c $ satisfy}
\beqn
\lambda_1 + (1-\epsilon)(3-2\epsilon)\lambda_2 &\le& \frac{(1-\epsilon)(3-2\epsilon)}{2}\nonumber\\
\lambda_1 +\lambda_2  &\le& \frac{3}{4}-\frac{\epsilon}{2}\nonumber\\
(1-\epsilon)(3-2\epsilon)\lambda_1 + \lambda_2  &\le&
\frac{(1-\epsilon)(3-2\epsilon)}{2}.\nonumber
 \eeqn
\end{theorem}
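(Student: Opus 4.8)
The plan is to realize $\mathbf{\Lambda}_s$ as the convex hull of the departure-rate pairs produced by stationary deterministic policies, as guaranteed by the preceding Lemma, and then read off its facets by sweeping the weight ratio $\alpha_2/\alpha_1$ as in Algorithm~\ref{alg:LP_for_Lambda_s}.

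First I would write the transition kernel $\mathbf{P}(j|s,a)$ explicitly over the eight states $(m,C_1,C_2)$. Under the symmetric Gilbert--Elliot dynamics the two channels evolve independently, each staying put w.p. $1-\epsilon$ and flipping w.p. $\epsilon$, while the position $m$ is deterministic: preserved under $a=1$ (stay) and toggled under $a=0$ (switch). Thus from $(m,c_1,c_2)$ the successor channel pair $(c_1',c_2')$ is reached w.p. $\prod_i \big[(1-\epsilon)\mathbf{1}\{c_i'=c_i\}+\epsilon\,\mathbf{1}\{c_i'\ne c_i\}\big]$, and $m$ moves according to $a$. I would record the two $8\times 8$ action-matrices and note the queue-swap symmetry $(1,c_1,c_2)\leftrightarrow(2,c_2,c_1)$, which maps any policy to a mirror policy and forces $\mathbf{\Lambda}_s$ to be symmetric about $\lambda_1=\lambda_2$; this halves the work and is already visible in the stated inequalities (the first/fifth and second/fourth are mirror pairs, the third is self-symmetric).

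Second, by the preceding Lemma the vertices of the LP (\ref{eq:LP}), equivalently the corners of the two-dimensional region $\mathbf{\Lambda}_s$, are attained by stationary deterministic policies, for which $x(s;a)=\pi(s)$ at the chosen action and $0$ otherwise, with $\pi$ the induced stationary distribution; then by (\ref{eq:dept_rate1})--(\ref{eq:dept_rate}) one has $\overline{r}_1=\pi(1,1,1)+\pi(1,1,0)$ whenever the policy stays there, and symmetrically for $\overline{r}_2$. Rather than enumerate all $2^8$ policies, I would argue that any Pareto-optimal policy must stay whenever its current channel is ON: staying there collects the associated reward that slot, whereas switching away forfeits it and is dominated by switching during a current-OFF slot instead. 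This leaves free decisions only at the four current-channel-OFF states $(1,0,0),(1,0,1),(2,1,0),(2,0,0)$, and positive correlation further narrows the trade-off to whether, upon losing its channel, the server waits for it to recover or pays one slot to move toward the other queue. The result is a short list of candidate corner policies (the two ``serve-one-queue'' extremes giving $(1/2,0)$ and $(0,1/2)$, plus a few threshold/alternation policies), each of which I would solve in closed form from its $8$-state balance equations.

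Finally, I would compute $(\overline{r}_1,\overline{r}_2)$ as a rational function of $\epsilon$ for each candidate, take the convex hull, and convert to half-space form, confirming by the $\alpha_2/\alpha_1$ sweep which policy is LP-optimal in each direction. The main obstacle, where the $\epsilon$-dependence concentrates, is deciding which candidates remain extreme: as $\epsilon$ grows, waiting for a correlated channel to recover loses value relative to the fixed one-slot switching cost, so some corner rate pairs drift into the interior of the hull. I expect this to be precisely the crossover at $\epsilon_c=1-\sqrt{2}/2$: for $\epsilon<\epsilon_c$ six corner policies survive, yielding the five listed facets, whereas at $\epsilon_c$ a symmetric pair of corners becomes collinear with its neighbors (hence non-extreme), so the two facets flanking each merge into a single bridging facet---these bridging facets carry the new coefficients $(1-\epsilon)(3-2\epsilon)$ and collapse five inequalities to three. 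The delicate computation is pinning the crossover down: setting the vanishing corner on the segment joining its two neighbors and solving for the $\epsilon$ at which collinearity occurs should yield exactly $\epsilon=1-\sqrt{2}/2$.
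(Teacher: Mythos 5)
There is a genuine gap, and it sits exactly where the theorem's two-case structure comes from. Your pruning step claims that any Pareto-optimal policy must stay whenever its current channel is ON, so that free decisions remain only at the four current-channel-OFF states. This is false, and it fails precisely in the regime $\epsilon<\epsilon_c$ that produces the five-facet region. The paper's proof (Appendix C) shows that for $(1+\epsilon-\epsilon^2)/(1-\epsilon)\le\alpha_2/\alpha_1\le(1-\epsilon)^2/\epsilon$ the LP-optimal deterministic policy is: at queue-1, stay \emph{only} at $(C_1,C_2)=(1,0)$ and switch at $(1,1)$, $(0,1)$, $(0,0)$; at queue-2, stay at everything except $(1,0)$. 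That is, the server deliberately abandons a currently ON channel at state $(1,1,1)$ — forfeiting a sure reward — because with weight heavily on queue-2 and strong channel correlation, relocating while $C_2$ is ON is worth more than the current packet at queue-1. This policy yields the corner $b_1=\bigl((1-\epsilon)^2/4,\,(2-\epsilon)/4\bigr)$, which for $\epsilon<\epsilon_c$ is a genuine vertex \emph{not} contained in the convex hull of the four corners your reduced candidate set can produce (the paper notes it falls inside that hull only when $\epsilon\ge\epsilon_c$). Consequently your enumeration would output the three-inequality region of the $\epsilon\ge\epsilon_c$ case for \emph{all} $\epsilon$, strictly undershooting $\mathbf{\Lambda}_s$ and missing the first, second, fourth and fifth inequalities of the $\epsilon<\epsilon_c$ case.

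The irony is that the rest of your plan is sound and close to the paper's actual route: the paper also works from the LP, identifies which $x(s;a)$ must vanish as $\alpha_2/\alpha_1$ sweeps (via an algebraic reduction of the balance equations rather than a dominance argument), solves each resulting deterministic policy's stationary equations in closed form, and reads off the corners; your queue-swap symmetry observation and your collinearity computation are both correct — indeed, placing $b_1$ on the segment $[b_0,b_2]$ gives $-(1-\epsilon)+\epsilon(3-2\epsilon)=0$, i.e.\ $2\epsilon^2-4\epsilon+1=0$, whose relevant root is $1-\sqrt{2}/2$. But that crossover is exactly the point at which the switch-away-from-ON policy enters or leaves the set of extreme points, so the phenomenon your dominance claim assumes away is the one generating $\epsilon_c$. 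To repair the proof you must keep such policies in the candidate list (or, as the paper does, let the sign structure of the LP objective's coefficients — e.g.\ the threshold $\alpha_2/\alpha_1\ge(1+\epsilon-\epsilon^2)/(1-\epsilon)$ forcing $x(1;1)=0$ — decide which actions are taken, rather than an a priori exchange argument).
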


The stability regions for these two ranges of $\epsilon$ are
displayed in Fig.~\ref{Fig:throughput_eps_0_250_write_up}~(a) and
(b). As $\epsilon \rightarrow 0.5$, the stability region converges
to that of the i.i.d. channels with ON probability equal to $0.5$.
In this regime, knowledge of the current channel state is of no
value.
%
%
%
%
As $\epsilon\rightarrow 0$ the stability region converges to that
for the system with no-switchover time in \cite{tass93}. In this
regime, the channels are likely to stay the same in several
consecutive time slots, therefore, the effect of switching delay is
negligible.

\begin{figure}
\begin{center}
\includegraphics[width=0.4\textwidth]{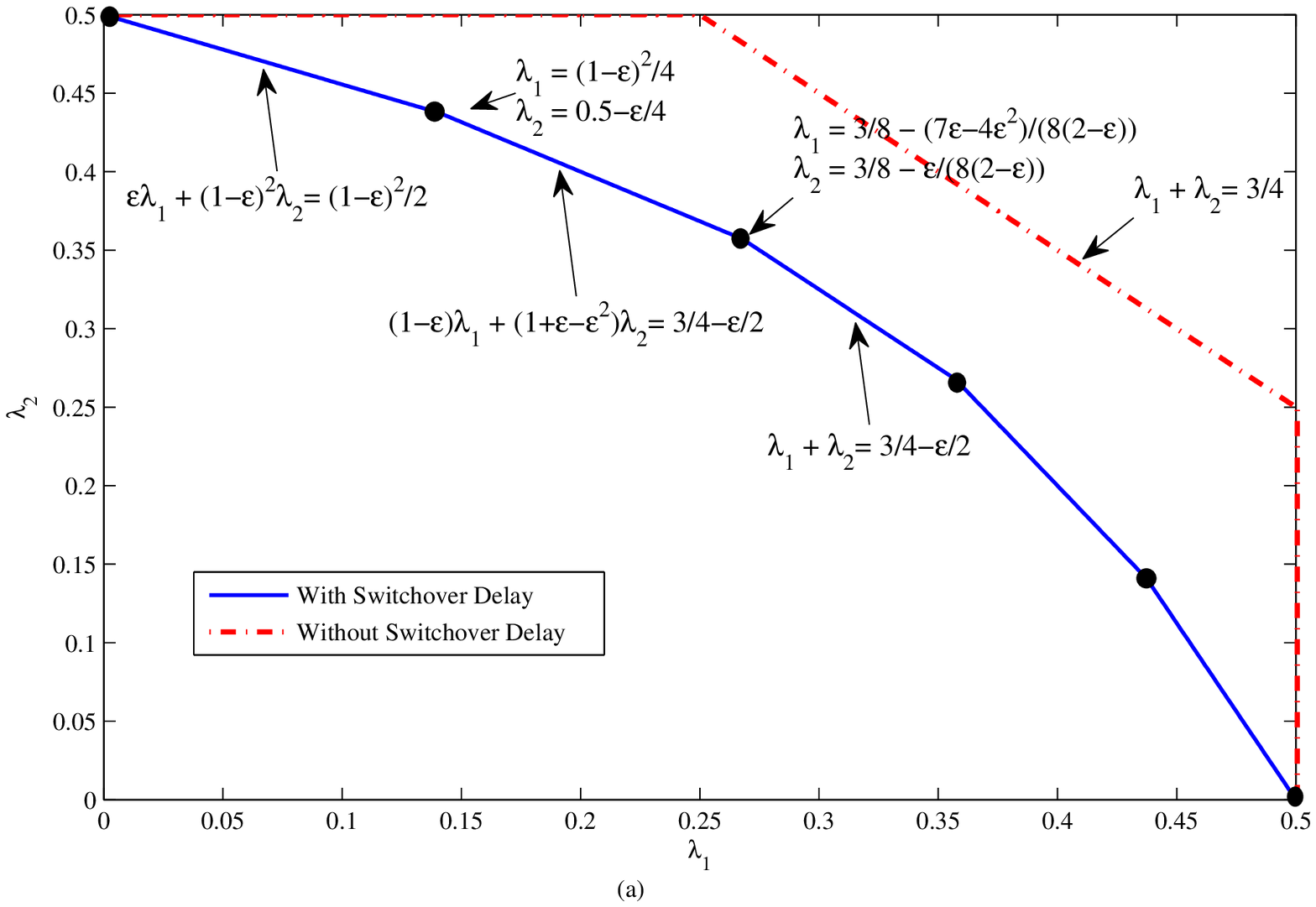}
\includegraphics[width=0.4\textwidth]{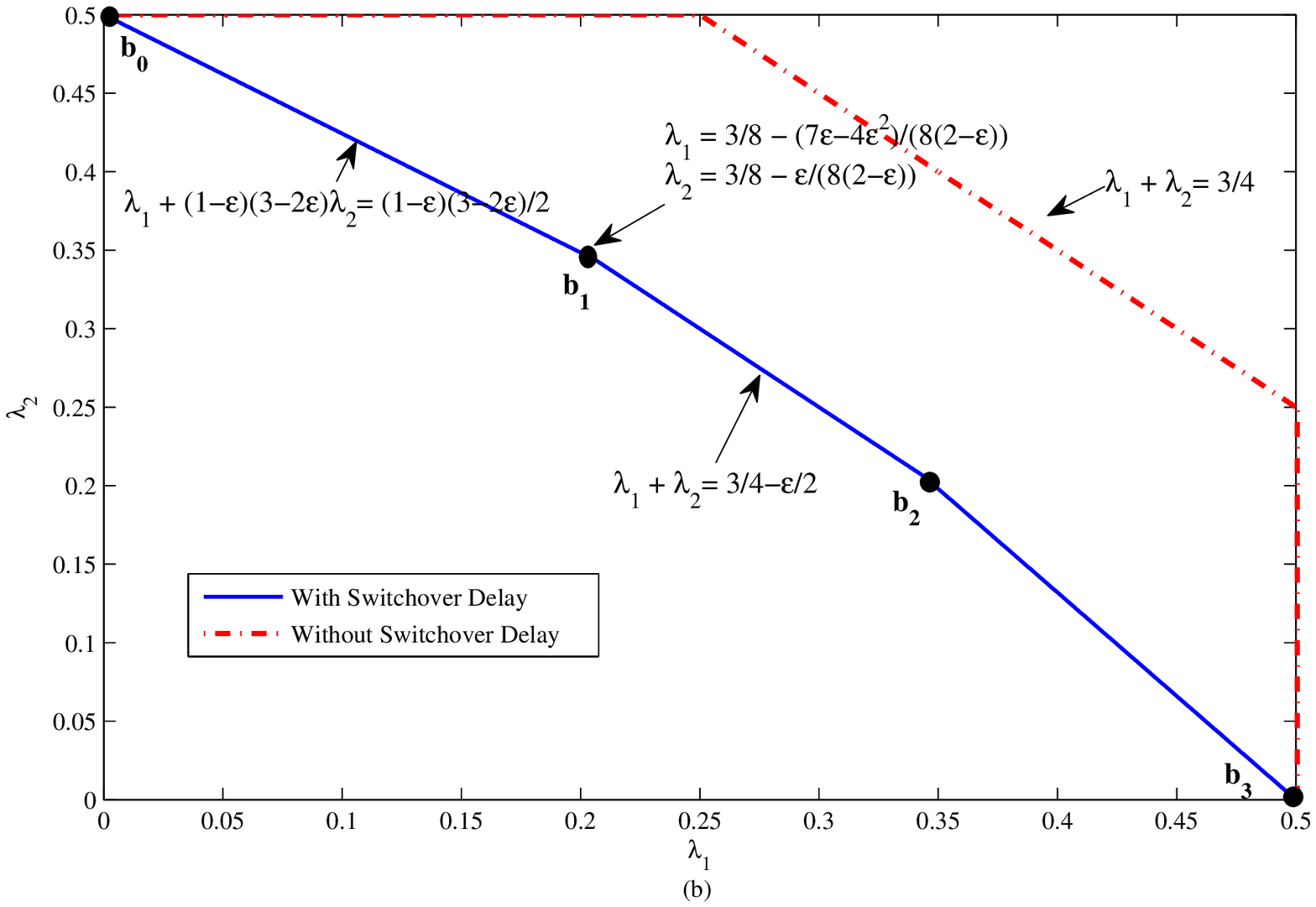}
\end{center}
\vspace{-4mm} \captionsetup{font=footnotesize}\caption{Stability
region under correlated channels with and without switchover time
for (a) $\epsilon=0.25 < \epsilon_c$ and (b) $\epsilon =0.40 \ge
\epsilon_c$.} \label{Fig:throughput_eps_0_250_write_up}
\end{figure}

\begin{remark}
Stability region expressions in terms of the channel parameter
$\epsilon$ are for two parallel queues, two-state Markovian channels
and one slot switching time. However, the technique used for
characterizing the stability region in terms of the state action
frequencies 
is general. For instance, this technique can be used to find the
stability region of systems with more than two queues, arbitrary
switchover times, and more complicated Markovian channel processes.
\end{remark}
%

As we stated before, the corner points of the polytope
$\mathbf{\Lambda}_s$ correspond to deterministic policies. We
observe that the 4 corner points of the rate region polytope for
$\epsilon \ge \epsilon_c$ case are also present in the rate region
polyopte for $\epsilon < \epsilon_c$ case. One of the two additional
corners for the $\epsilon<\epsilon_c$ case, namely, the policy for
the corner point $(r_1,r_2)=((1-\epsilon)^2/4,(2-\epsilon)/4)$,
corresponds to the following deterministic policy: At queue-1: stay
only at $(C_1,C_2)=(1,0)$ and switch in all other states and at
queue-2:
switch only at $(C_1,C_2)=(1,0)$ and stay at all other states. 
The critical point is that this policy decides to switch at queue-1
when the channels are $(C_1,C_2)=(1,1)$, which does not provide a
vertex for the rate region for $\epsilon \ge \epsilon_c$.
Analytically, this is because
$(r_1,r_2)=((1-\epsilon)^2/4,(2-\epsilon)/4)$ is within the convex
combination of the 4 vertices of the rate region for $\epsilon\ge
\epsilon_c$. The intuitive reason behind this is that as $\epsilon$
increases, the predictions about the future channel states become
less reliable. Therefore, as $\epsilon$ increases, switching at
$(C_1,C_2)=(1,1)$ at queue-1 for future ON channel states at queue-2
becomes less preferable than a successful transmission available at
queue-1 in the current slot.
%

\section{Frame Based Dynamic Control Policy}\label{Sec:Opt_Pol}

We propose a frame-based dynamic control (FBDC) policy inspired by
the 
state action frequencies and prove that it is throughput-optimal
asymptotically in the frame length. The motivation behind the FBDC
policy is that a policy $\pi^*$ that achieves the optimization in
(\ref{eq:LP}) for given weights $\alpha_1$ and $\alpha_2$ for the
saturated system, should achieve a \emph{good} performance also in
the original system when
the queue sizes $Q_1$ and $Q_2$ are used as weights. 
This is because first, the policy $\pi^*$ will lead to similar
average departure rates in both systems for sufficiently high
arrival rates, and second, the usage of queue sizes as weights
creates self adjusting policies that capture the dynamic changes due
to stochastic arrivals. This is similar to 
the structure of the celebrated max-weight scheduling in
\cite{tass92}. Specifically, divide the time into equal-size
intervals of $T$ slots and let $Q_1(jT)$ and $Q_2(jT)$ be the queue
lengths at the beginning of the $j$th interval. We find the
deterministic policy that optimally solves (\ref{eq:LP}) when
$Q_1(jT)$ and $Q_2(jT)$ are used as weights 
and then apply this policy in each time slot of the
frame. The FBDC policy is described below 
in details.

\begin{algorithm}[h]
\caption{\textsc{Frame Based Dynamic Control (FBDC) Policy}}
\label{alg:FBDC}
\begin{algorithmic}[1]

\STATE Find the optimal solution to the following Linear Program
\beqn
\displaystyle \mbox{max.}_{\left\{r_1, r_2\right\}} &  Q_1(jT) r_1 + Q_2(jT) r_2 \nonumber \\
\mbox{subject to}  &   \left( r_1, r_2\right) \in \mathbf{\Lambda}_s
\label{objcon}
\eeqn where $\mathbf{\Lambda}_s$ is the rate polytope derived in
Section \ref{Sec:Stab_Reg}. \STATE The optimal solution $(r_1^{*},
r_2^{*})$ in step 1 is a corner point of $\mathbf{\Lambda}_s$ that
corresponds to a stationary deterministic policy denoted by $\pi^*$.
Apply $\pi^*$ in each time slot of the frame.
\end{algorithmic}
\end{algorithm}

\begin{theorem}\label{thm:FBDC}
\emph{The FBDC policy stabilizes the system as long as the arrival
rates $(\lambda_1,\lambda_2)$ are within the
$\mathbf{\delta}$-stripped stability region
$\mathbf{\Lambda_s^{\delta}}$ where $\delta(T)$ is a decreasing function of $T$.}
\end{theorem}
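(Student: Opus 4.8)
The plan is to prove stability via a quadratic Lyapunov function combined with a $T$-slot (frame-based) drift argument, in the spirit of max-weight scheduling but with the per-slot scheduling decision replaced by the stationary MDP policy $\pi^*$ held fixed over each frame. I would take $L(\mathbf{Q}) = Q_1^2 + Q_2^2$ and study the conditional frame drift $\Delta_T(jT) \triangleq \mathbb{E}[L(\mathbf{Q}(jT+T)) - L(\mathbf{Q}(jT)) \mid \mathbf{Q}(jT)]$. The first step is the standard per-slot inequality: writing $\tilde\mu_i(t) \in \{0,1\}$ for the service \emph{offered} to queue $i$ (one iff the server is at queue $i$, chooses \emph{stay}, and $C_i(t)=1$), the dynamics $Q_i(t+1) = \max\{Q_i(t) - \tilde\mu_i(t), 0\} + A_i(t)$ give $Q_i(t+1)^2 \le Q_i(t)^2 + \tilde\mu_i(t)^2 + A_i(t)^2 - 2Q_i(t)(\tilde\mu_i(t) - A_i(t))$, which holds regardless of whether the queue empties. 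Summing over the frame and replacing $Q_i(t)$ by $Q_i(jT)$ at the cost of an $O(T^2)$ error (each queue moves by at most $A_i(t)+1$ per slot, and the second moments are bounded by $A_{\max}^2$), I obtain $\Delta_T(jT) \le BT^2 + 2T\sum_i Q_i(jT)\lambda_i - 2\sum_i Q_i(jT)\,\mathbb{E}\big[\sum_{t=jT}^{jT+T-1}\tilde\mu_i(t)\mid \mathbf{Q}(jT)\big]$ for a constant $B$ depending only on $A_{\max}$.

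The key structural observation is that within a frame the FBDC policy applies the fixed stationary policy $\pi^*$, whose \emph{stay/switch} decisions depend only on the state $\mathbf{s}_t = (m(t),C_1(t),C_2(t))$ and \emph{not} on the queue lengths. Hence the offered-service process $\tilde\mu_i(t)$ over the frame is statistically identical to the reward process $r_i(\mathbf{s}_t,a_t)$ of the saturated MDP under $\pi^*$, started from the state $\mathbf{s}_{jT}$; emptiness of the real queues is already absorbed by the $\max\{\cdot,0\}$ operator in the inequality above. I would then invoke the mixing of the finite-state unichain Markov chain induced by $\pi^*$: since there are only finitely many deterministic corner policies, each inducing a chain with a single recurrent class, convergence to stationarity is geometric and I can take a single constant $c$, uniform over all corner policies and all initial states, such that $\mathbb{E}\big[\sum_{t=jT}^{jT+T-1}\tilde\mu_i(t)\mid \mathbf{s}_{jT}\big] \ge T r_i^* - c$, where $(r_1^*,r_2^*)\in\mathbf{\Lambda}_s$ is the stationary reward of $\pi^*$.

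Substituting this bound yields $\Delta_T(jT) \le BT^2 + 2c\sum_i Q_i(jT) - 2T\sum_i Q_i(jT)(r_i^* - \lambda_i)$. Now I use the defining property of Step~1 of the policy: $\pi^*$ maximizes $Q_1(jT)r_1 + Q_2(jT)r_2$ over $(r_1,r_2)\in\mathbf{\Lambda}_s$, while $(\lambda_1,\lambda_2)\in\mathbf{\Lambda}_s^\delta$ means $(\lambda_1+\delta,\lambda_2+\delta)\in\mathbf{\Lambda}_s$ is a feasible competitor. Therefore $\sum_i Q_i(jT)r_i^* \ge \sum_i Q_i(jT)(\lambda_i+\delta)$, i.e. $\sum_i Q_i(jT)(r_i^*-\lambda_i) \ge \delta\sum_i Q_i(jT)$, and the drift bound becomes $\Delta_T(jT) \le BT^2 - 2(T\delta - c)\sum_i Q_i(jT)$. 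Choosing $\delta = \delta(T)$ decreasing in $T$ but with $T\delta(T) - c > 0$ (e.g. $\delta(T) = 2c/T$) makes the coefficient strictly negative, giving negative drift whenever $\sum_i Q_i(jT)$ exceeds $BT^2/(2(T\delta - c))$. A telescoping, iterated-expectation argument over frames then bounds $\limsup_{J}\frac1J\sum_{j<J}\mathbb{E}[\sum_i Q_i(jT)]$, and since the queues change by a bounded amount within a frame this extends to the per-slot time average, establishing strong stability in the sense of Definition~\ref{def:stab}. The main obstacle is the quantitative mixing estimate of the middle paragraph: one must argue the uniform $O(1/T)$ gap between the finite-horizon expected offered service and the stationary rate $r_i^*$ across all candidate deterministic policies and all starting states, since it is precisely this gap that determines $\delta(T)$ and hence how close FBDC comes to the full region $\mathbf{\Lambda}_s$.
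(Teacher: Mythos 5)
Your proposal is correct, but it takes a genuinely different route from the paper's proof. The paper works with the \emph{actual} departure indicators $D_i(t)$, so it must explicitly control the gap between the real system and the saturated system: it introduces the saturated reward process $r_i(t)$ under $\pi^*$, invokes a concentration inequality for empirical state-action frequencies (Lemma \ref{lem:MDP_convergence}, via Glynn--Ormoneit/Mannor--Tsitsiklis), splits the conditional expectation $\mathbb{E}\{R^*(t)-W_T(t)\,|\,\mathbf{Q}(t)\}$ according to a high-probability event, and finally disposes of the empty-queue discrepancy by noting that $\hat{D}_T=\hat{r}_T$ whenever both queues start the frame with at least $T$ packets; this yields $\delta(T)=\delta_2+c_1e^{-c_4\delta_2^2T}$, optimized to roughly $T^{-1/2+\delta_3}$. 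You instead absorb queue emptiness at the very first step by writing the dynamics with \emph{offered} service and the $\max\{\cdot,0\}$ operator, so the per-slot quadratic inequality already involves $\tilde\mu_i(t)$ rather than actual departures; then the key observation that FBDC's within-frame actions are queue-independent makes $\tilde\mu_i(t)$ equal in law to the saturated MDP's reward process, and you only need the \emph{mean} ergodic bound $\mathbb{E}\big[\sum_t\tilde\mu_i(t)\big]\ge Tr_i^*-c$ (geometric or Ces\`aro convergence of a finite unichain, uniform over the finitely many corner policies and initial states) rather than any concentration inequality. Your route buys three things: it is more elementary (no exponential concentration needed, since the Lyapunov drift only requires conditional expectations); it gives a sharper stripping parameter, $\delta(T)=O(1/T)$ versus the paper's roughly $O(T^{-1/2+\delta_3})$; and it sidesteps the delicate step where the paper bounds the indicator term $1_{\{\mathbf{Q}(t)<T\mathbf{1}\}}\sum_i Q_i(t)$ by $2T$, which strictly speaking requires a per-queue argument (the discrepancy in queue $i$'s departures is weighted only by $Q_i$, which is below $T$ on the relevant event). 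What the paper's heavier machinery buys is sample-path/high-probability control of the empirical state-action frequencies, which fits its broader state-action-frequency framework and is reused almost verbatim in the Myopic policy analysis; your expectation-level bound would serve there equally well, though. One small imprecision: you say the queues "change by a bounded amount within a frame" when extending stability from frame boundaries to all slots --- arrivals are not assumed bounded, only $\mathbb{E}[A_i^2(t)]\le A_{\max}^2$, so the correct statement is that the \emph{expected} change within a frame is bounded, which is all the extension needs.
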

The proof is given in Appendix D. It performs a drift
analysis using the standard quadratic Lyapunov function. However, \emph{it
is novel in utilizing an MDP framework in Lyapunov
drift arguments}. The basic idea is that when the
optimal policy solving (\ref{objcon}), $\pi^*$,
is applied over
a sufficiently long frame of $T$ slots, the average 
output rates of both the actual system and the corresponding saturated
system converge to $\mathbf{r}^*$. 
For the saturated system, the difference between empirical rates and
$\mathbf{r^*}$ is essentially due to the convergence of the Markov chain 
induced by policy $\pi^*$ to its steady state,
which is exponentially fast in $T$
\cite{shie05}. Therefore, for sufficiently large
queue lengths, the difference between the empirical rates in the
actual system and $r^*$ 
also decreases with $T$. This
ultimately results in a negative Lyapunov drift when $\boldsymbol \lambda$ is
inside the $\delta(T)$-stripped stability region
since from (\ref{objcon}) we have $\mathbf{Q}(jT).\mathbf{r}^* \ge
\mathbf{Q}(jT).\boldsymbol \lambda$.

The parameter $\delta(T)$, capturing the difference between the stability region of the FBDC policy and $\mathbf{\Lambda}_s$, is related to the mixing time of the system Markov chain and is a decreasing function of $T$. This establishes that the FBDC policy is asymptotically optimal and that $\mathbf{\Lambda}=\mathbf{\Lambda_s}$. Moreover, as also suggested by the simulation results in Section \ref{Sec:Sim}, $\delta(T)$ is negligible even for relatively small values of $T$.
%
%

The FBDC policy is easy to implement since it does not require the
solution of the LP for each frame. Instead, we can solve the LP for
all possible $Q_2(t)/Q_1(t)$ values only \emph{once} in advance and
create a mapping from the $Q_2(t)/Q_1(t)$ values to the corner
points of the stability region. Then, we can use this mapping to
find the corresponding optimal policy $\pi^*$ at the beginning of
each frame. Such a mapping depends only on the \emph{slopes} of the
lines in the stability region in
Fig.~\ref{Fig:throughput_eps_0_250_write_up}. Therefore, these mappings are already
available  and are given in figures
\ref{Fig:optimal_map} and \ref{Fig:optimal_map2}.
%
%

\begin{remark}
FBDC policy is a \emph{generic} policy applicable to much more
general systems. For instance, it provides throughput optimality for
systems with more number of queues, with swithcover time from some
queue-$i$ to queue-$j$ given by a constant $d_{ij}\ge 1$ slots,
and with more complicated 
Markov modulated channel structures. FBDC can also be used to
achieve stability for classical network control problems such the
one with no-switchover times analyzed in \cite{tass93}. 
\end{remark}
\begin{remark}
FBDC policy provides a new framework for developing
throughput-optimal policies for network control. Namely, given any
queuing system whose corresponding saturated system is Markovian
with a finite state space, throughput optimality is easily achieved
by solving an LP in order to find the stationary MDP solution of the
corresponding saturated system and applying this solution over a
frame in the actual system.
\end{remark}

\begin{figure}
\centering
\psfrag{1\r}[l][][.8]{\!\!\!\!\!\!\!$\frac{\epsilon}{(1-\epsilon)^2}$}
\psfrag{2\r}[l][][.8]{\!\!\!\!\!\!\!$\frac{1-\epsilon}{1+\epsilon-\epsilon^2}$}
\psfrag{3\r}[l][][.8]{\!\!$1$}
\psfrag{4\r}[l][][.8]{\!\!\!\!\!\!\!$\frac{1+\epsilon-\epsilon^2}{1-\epsilon}$}
\psfrag{5\r}[l][][.8]{\!\!\!\!\!\!\!$\frac{(1-\epsilon)^2}{\epsilon}$}
\psfrag{6\r}[l][][.8]{$b_5$} \psfrag{7\r}[l][][.8]{$b_4$}
\psfrag{8\r}[l][][.8]{$b_3$} \psfrag{9\r}[l][][.8]{$b_2$}
\psfrag{a\r}[l][][.8]{$b_1$} \psfrag{b\r}[l][][.8]{$b_0$}
\psfrag{c\r}[l][][1]{$\frac{Q_2}{Q_1}$}
\includegraphics[width=0.35\textwidth]{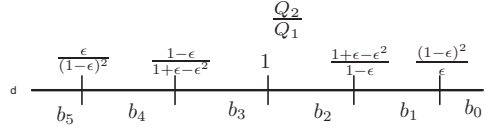}
\vspace{-1mm}\caption{Mapping from the queue sizes to the capacity
region corner points for the FBDC policy for
$\epsilon<\epsilon_c$.}\label{Fig:optimal_map}
\end{figure}

\begin{figure}
\centering
\psfrag{1\r}[l][][.8]{\!\!\!\!\!\!\!\!\!\!$\frac{1}{(1-\epsilon)(3-2\epsilon)}$}
\psfrag{3\r}[l][][.8]{\!\!$1$}
\psfrag{5\r}[l][][.8]{\!\!\!\!\!\!\!\!\!\!$(1-\epsilon)(3-2\epsilon)$}
\psfrag{6\r}[l][][.8]{$b_5$} \psfrag{7\r}[l][][.8]{$b_3$}
\psfrag{8\r}[l][][.8]{$b_2$} \psfrag{9\r}[l][][.8]{$b_0$}
\psfrag{c\r}[l][][1]{$\frac{Q_2}{Q_1}$}
\includegraphics[width=0.35\textwidth]{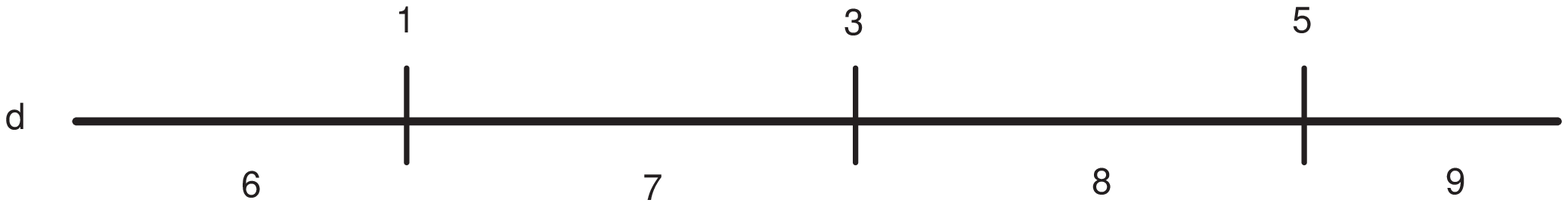}
\vspace{-1mm}\caption{Mapping from the queue sizes to the capacity
region corner points for the FBDC policy for
$\epsilon\ge\epsilon_c$.}\label{Fig:optimal_map2}
\end{figure}

Note that the FBDC policy does not require the knowledge of the arrival rates, the channel statistics or the capacity region. The mapping in figures \ref{Fig:optimal_map} and \ref{Fig:optimal_map2} is given in terms of threshold on $\epsilon$ since for two queues these values are available. For general networks of many queues and arbitrary switchover times, the corresponding table of mappings from the queue sizes to stationary deterministic policies can be obtained by solving the LP in (\ref{eq:LP}) using the queue sizes $(Q_1(jT),Q_2(jT)$ as weights.

In the next section we consider Myopic policies that do not require
the solution of an LP and provide stability for more than
$96\%$ of the stability region. 
Simulation results in Section \ref{Sec:Sim} suggest that the Myopic
policies may indeed achieve the full stability region while
providing better delay performance than
the FBDC policy for most arrival rates. 

\section{Myopic Control Policies}\label{Sec:Myopic_Pol}
\vspace{-0.05mm}

Next, we investigate the performance of simple \emph{Myopic}
policies. We implement these policies in a frame-based fashion where
the scheduling/switching decisions during a frame of $T$ time slots
are based on queue lengths at the beginning of the frame and channel
predictions for a small number of slots into the future. We refer to
a Myopic policy considering $k$ future time slots as the
$k$-Lookahead Myopic policy. In the $1$-Lookahead Myopic policy, the
server chooses the queue with the larger weight where the weight of
a queue is the product of the queue length and the expected number
of departures in the current and the next slot
from the queue. 
%
%
The detailed description of the $1$-Lookahead Myopic policy is given
below. 
%
\vspace{-3.3mm}
\begin{algorithm}[h]
\caption{\textsc{$1$-Lookahead Myopic Policy}} \label{alg:Myop1}
\begin{algorithmic}[1]

\STATE Assuming that the server is currently with queue 1 and the
system is at the $j$th frame, calculate the following weights in
each time slot of the current frame; \beqn
W_1(t) \!\!\!\!&=\!\!\!\!&
Q_1(jT)\Big(C_1(t)+\mathbb{E}\big[C_1(t+1)|C_1(t)\big]\Big)\nonumber\\
W_2(t) \!\!\!\!&=\!\!\!\!&
Q_2(jT)\mathbb{E}\big[C_2(t+1)|C_2(t)\big].\label{eq:myopic}
\eeqn \STATE
If $W_1(t)\ge W_2(t)$ stay with queue one, 
otherwise, switch to the other queue. A similar rule apply for queue
2. 
%
\end{algorithmic}
\end{algorithm}
\vspace{-2.6mm}

Next we establish a lower bound on the stability region of the
1-Lookahead Myopic Policy 
by comparing its drift over a frame to the drift of the FBDC policy.
\begin{theorem}\label{thm:Myopic_1LH}
\emph{The 1-Lookahead Myopic policy achieves at least
$\gamma$-fraction of the stability region $\mathbf{\Lambda_s}$
asymptotically in $T$ where $\gamma \ge 90\%$.}
\end{theorem}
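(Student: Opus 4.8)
The plan is to reproduce the frame-based Lyapunov drift argument used for the FBDC policy (Appendix~D), but to replace the max-weight optimality of $\pi^*$ by a \emph{quantitative} comparison showing that the Myopic rule recovers at least a $\gamma$-fraction of the optimal weighted departure rate. Using the quadratic Lyapunov function $L(\mathbf{Q})=Q_1^2+Q_2^2$ and the frame drift $\Delta(jT)=\mathbb{E}[L(\mathbf{Q}((j+1)T))-L(\mathbf{Q}(jT))\mid\mathbf{Q}(jT)]$, the same algebra as in the FBDC proof gives
\begin{equation*}
\Delta(jT)\le 2T\,\mathbf{Q}(jT)\cdot\big(\boldsymbol\lambda-\mathbf{r}^{My}(jT)\big)+BT^2,
\end{equation*}
where $B$ depends only on $A_{\max}$ and $\mathbf{r}^{My}(jT)$ is the frame-average expected departure vector produced by the Myopic policy. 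Hence it suffices to establish, for every $\mathbf{Q}$,
\begin{equation}\label{eq:suff}
\mathbf{Q}\cdot\mathbf{r}^{My}\ge \gamma\,\max_{\mathbf{r}\in\mathbf{\Lambda}_s}\mathbf{Q}\cdot\mathbf{r},\qquad \gamma\ge 0.90,
\end{equation}
since if $\boldsymbol\lambda\in\gamma\mathbf{\Lambda}_s$ then $\boldsymbol\lambda/\gamma\in\mathbf{\Lambda}_s$, so $\max_{\mathbf{r}}\mathbf{Q}\cdot\mathbf{r}\ge\gamma^{-1}\mathbf{Q}\cdot\boldsymbol\lambda$ and \eqref{eq:suff} forces the dominant drift term negative for large $\|\mathbf{Q}\|$.

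To evaluate the left side of \eqref{eq:suff} I would exploit scale invariance: both $\mathbf{r}^{My}$ and the comparison $W_1\gtrless W_2$ depend on $\mathbf{Q}$ only through the ratio $q=Q_2/Q_1$. For the Gilbert--Elliot channel the one-step predictions are $\mathbb{E}[C_i(t+1)\mid C_i(t)=1]=1-\epsilon$ and $\mathbb{E}[C_i(t+1)\mid C_i(t)=0]=\epsilon$, so the weights take the explicit values $W_1\in\{(2-\epsilon)Q_1,\,\epsilon Q_1\}$ and $W_2\in\{(1-\epsilon)Q_2,\,\epsilon Q_2\}$ (symmetrically at queue~2). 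Thus, on each range of $q$ the comparison $W_1\ge W_2$ collapses the Myopic rule, with the queue factors frozen at the frame start, to one fixed stationary deterministic policy among the $2^8$ policies of the saturated MDP. I would write down the induced $8$-state position/channel Markov chain for that policy, solve its balance equations for the stationary distribution, and read off $\mathbf{r}^{My}(q)=(r_1^{My},r_2^{My})$ from the reward definitions \eqref{eq:dept_rate1}--\eqref{eq:dept_rate}. Exactly as in the FBDC argument, the finite-frame empirical rates differ from these stationary rates only by the $\delta(T)$ mixing correction, which is what the phrase ``asymptotically in $T$'' absorbs.

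The remaining task is a finite optimization. Writing $q=Q_2/Q_1$, \eqref{eq:suff} becomes
\begin{equation*}
\gamma=\min_{\epsilon\in(0,1/2]}\ \min_{q\ge 0}\ \frac{r_1^{My}(q)+q\,r_2^{My}(q)}{\max_{\mathbf{r}\in\mathbf{\Lambda}_s}\big(r_1+q\,r_2\big)},
\end{equation*}
where by Theorem~\ref{thm:stab} the denominator is attained at the corner point of $\mathbf{\Lambda}_s$ selected by the FBDC map (Figs.~\ref{Fig:optimal_map}--\ref{Fig:optimal_map2}) for slope $q$; by the queue symmetry it suffices to take $q\le 1$. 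On the ranges of $q$ where the Myopic map and the FBDC map induce the \emph{same} deterministic policy the ratio equals $1$, so the minimum can only occur on the finitely many ranges where they disagree, and on each of those numerator and denominator are explicit functions of $q$ and $\epsilon$.

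The hard part will be the bookkeeping of the middle step combined with this minimization: correctly identifying, for every threshold interval of $q$ and in both regimes $\epsilon<\epsilon_c$ and $\epsilon\ge\epsilon_c$, which deterministic policy the Myopic rule induces, solving the corresponding $8$-state chain, and then verifying uniformly in $\epsilon$ that the ratio never falls below $0.90$. I expect the worst case to arise on the interval of $q$ where the Myopic rule chooses to switch at $(C_1,C_2)=(1,1)$ while the throughput-optimal corner stays --- precisely the disagreement discussed after Theorem~\ref{thm:stab} --- so I would isolate that interval, show the ratio is minimized there, and check that its value still exceeds $\gamma=0.90$.
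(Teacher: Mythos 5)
Your proposal follows essentially the same route as the paper: a frame-based Lyapunov drift bound reduced to the pointwise weighted-rate comparison $\mathbf{Q}\cdot\mathbf{r}^{My}\ge \gamma \max_{\mathbf{r}\in\mathbf{\Lambda}_s}\mathbf{Q}\cdot\mathbf{r}$, then identification of the threshold map from $q=Q_2/Q_1$ to corner (deterministic) policies of the saturated MDP, and a finite minimization of the weighted-rate ratio over the finitely many ``discrepant'' intervals in both regimes of $\epsilon$ --- which is precisely the paper's Lemma~\ref{lem:My_FBDC_comp} ($\Psi'\ge 0.9002$) and its Appendix~E case analysis, with the $\delta(T)$ corrections absorbed asymptotically just as you describe. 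One minor factual slip: the binding case in the paper is the interval where the Myopic rule \emph{stays} at $(C_1,C_2)=(1,1)$ (corner $b_2$) while the optimal corner is $b_1$, which switches at $(1,1)$ --- the reverse of your guess --- but since your plan checks every discrepancy interval this does not affect the argument.
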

The proof is constructive and will be establish in various steps in the following.
The basic idea behind the proof is
that the 1-Lookahead Myopic policy produces a mapping from the
set of queue sizes to the stationary deterministic policies
corresponding to the corners of the stability region. This mapping
is similar to that of the FBDC policy, however, the thresholds on
the queue size ratios $Q_2/Q_1$ are determined according to
(\ref{eq:myopic}). We first show the derivation of this mapping and
then bound the difference between the weighted average departure
rates of the 1-Lookahead Myopic and the FBDC policies. We refer to the 1-Lookahead Myopic
policy as the Myopic policy in the following. \\
\\
\textbf{Mapping from queue sizes to actions. Case-1: $\epsilon <
\epsilon_c$ }
For each corner of the throughput region, we will find
the range of coefficients $Q_1$ and $Q_2$ such that the Myopic
policy chooses the deterministic actions corresponding to the given
corner. We enumerate the corners of the throughput region as $b_0,
b_1,...,b_5$
where $b_0$ is $(0,0.5)$ and $b_5$ is $(0.5,0)$.\\
\\
\textbf{Corner $b_0$}:\\
Optimal actions are to stay at queue-2 for every channel condition.
Therefore, the server chooses queue-2 even when the channel state is
$C_1(t),C_2(t)=(1,0)$. Therefore, using (\ref{eq:myopic}), for the
Myopic policy to take the deterministic actions corresponding to
$b_0$ we need
\begin{equation*}
Q_1.(1-\epsilon) < Q_2.(\epsilon)\;\;\Rightarrow \frac{Q_2}{Q_1} >
\frac{1-\epsilon}{\epsilon}.
\end{equation*}
This means that if we apply the Myopic policy with coefficients
$Q_1, Q_2$ such that $Q_2/Q_1 > (1-\epsilon)/\epsilon$, then the
system output rate will be driven towards the corner point $b_0$
(both in the saturated system or in the actual system with large enough arrival rates). \\
\textbf{Corner $b_1$}:\\
The optimal actions for the corner point $b_1$ are as follows: At
queue-1, for the channel state $10$:stay, for the channel states
$11$, $01$ and $00$: switch. At queue-2, for the channel state $10$:
switch, for the channel states $11$, $01$ and $00$: stay. The most
limiting conditions are $11$ at queue-1 and $10$ at queue-2.
Therefore we need, $Q_1(2-\epsilon) < Q_2 (1-\epsilon)$ and
$Q_1(1-\epsilon) > Q_2\epsilon$. Combining these we have
\begin{equation*}
 \frac{2-\epsilon}{1-\epsilon}<\frac{Q_2}{Q_1} <
\frac{1-\epsilon}{\epsilon}.
\end{equation*}
Note that the condition $\epsilon < \epsilon_c = 1-\sqrt{2}/2$
implies that $\frac{1-\epsilon}{\epsilon}
> \frac{2-\epsilon}{1-\epsilon}$.\\
\textbf{Corner $b_2$}:\\
The optimal actions for the corner point $b_1$ are as follows: At
queue-1, for the channel state $10$ and $11$:stay, for the channel
states $01$ and $00$: switch. At queue-2, for the channel states
$10$: switch, for the channel states $11$, $01$ and $00$: stay. The
most limiting conditions are $11$ at queue-1 and $00$. Therefore we
need, $Q_1(2-\epsilon) > Q_2 (1-\epsilon)$ and $Q_1 < Q_2$.
Combining these we have
\begin{equation*}
1 <\frac{Q_2}{Q_1} <  \frac{2-\epsilon}{1-\epsilon}.
\end{equation*}
The conditions for the rest of the corners are symmetric and can be
found similarly to obtain the mapping in Fig. \ref{Fig:myopic_map}.\\
\\
\textbf{Mapping from queue sizes to actions. Case-2: $\epsilon \ge
\epsilon_c$ }\\
In this case there are 4 corner points in the
throughput region. We enumerate these corners as $b_0, b_2,b_3,b_5$
where $b_0$ is $(0,0.5)$ and $b_5$ is $(0.5,0)$.\\
\textbf{Corner $b_0$}:\\
The analysis is the same as the $b_0$ analysis in the previous case
and we obtain that for the Myopic policy to take the deterministic
actions corresponding to $b_0$ we need
\begin{equation*}
 \frac{Q_2}{Q_1} >
\frac{1-\epsilon}{\epsilon}.
\end{equation*}\\
\textbf{Corner $b_2$}:\\
This is the same corner point as in the previous case corresponding
to the same deterministic policy: At queue-1, for the channel state
$10$ and $11$:stay, for the channel states $01$ and $00$: switch. At
queue-2, for the channel states $10$: switch, for the channel states
$11$, $01$ and $00$: stay. The most limiting conditions are $10$ at
queue-2 (since $\epsilon \ge \epsilon_c$ we have
$\frac{1-\epsilon}{\epsilon} < \frac{2-\epsilon}{1-\epsilon}$) and
$00$. Therefore we need, $Q_1(1-\epsilon)
> Q_2\epsilon$ and $Q_1 < Q_2$. Combining these we have
\begin{equation*}
1 <\frac{Q_2}{Q_1} <  \frac{1-\epsilon}{\epsilon}.
\end{equation*}
The conditions for the rest of the corners are symmetric and can be
found similarly to obtain the mapping in Fig. \ref{Fig:myopic_map2}
for $\epsilon \ge\epsilon_c$.\\

\textbf{Drift Analysis}\\

In each frame, FBDC policy drives the system output rate towards the
corner point of the throughput region that is the solution of the
optimization in (\ref{objcon}) (i.e., according to the mappings in
figures \ref{Fig:optimal_map} and \ref{Fig:optimal_map2}). 
The Myopic policy performs a similar operation but according to the
different mappings given in figures \ref{Fig:myopic_map} and
\ref{Fig:myopic_map2}. In the following we will analyze in which
$Q_2(t)/Q_1(t)$ regions the Myopic and the FBDC policies drive the
system towards different corner points. We will bound the resulting
difference between weighted output rates of the two policies where
the weights are the queue sizes at the beginning of the frames,
thereby obtaining a worst case performance for the Myopic policy.
Writing the drift expressions for the Myopic policy similar to the
proof of Theorem \ref{thm:FBDC}, we have from
(\ref{eq:drift_comparison}) that \beqn \frac{\Delta_T(t)}{2T} \!\leq
\!\!(B\!+\!2)T \!\!+\!\! \sum_{i} Q_i(t) \lambda_i \!-\!\!\!
\sum_{i} Q_i(t) r_i^{My} \!\!+\! \delta_3\!\!\!\sum_{i}Q_i(t)
\label{eq:drift_My} \eeqn where $\mathbf{r}^{My}$ is the corner
point obtained from one of the Myopic mappings and $\delta_3(T)$ is
a decreasing function of $T$. Now let
\begin{equation*}
W_{My}=\sum_{i} Q_i(t) r_i^{My} + \bigg(\sum_{i}Q_i(t)
\bigg)\delta_3(T)
\end{equation*}
and
\begin{equation*}
W_{FBDC}=\sum_{i} Q_i(t) r_i^{*} + \bigg(\sum_{i}Q_i(t)
\bigg)\delta(T).
\end{equation*}
denote the time average weighted departure rates corresponding to
the two policies. Also denote the ratio of the two by $\Psi_T=
W_{My}/W_{Opt}$. We find lower bounds on the ratio $\Psi$ over all
queue sizes at the beginning of the current frame, which will
constitute a lower bound on the stability region of the Myopic
policy. First, consider the simpler ratio $\Psi'$ given by
\begin{equation*}
\Psi'=\frac{\sum_{i} Q_i(t) r_i^{My}}{\sum_{i} Q_i(t) r_i^{*}}.
\end{equation*}
We later take the $\delta$ factors into account via the argument
that $\Psi'\rightarrow \Psi$. The following lemma is proved in
Appendix E and essentially constitutes a lower bound on the
achievable throughput region of the Myopic policy.
\begin{lemma}\label{lem:My_FBDC_comp}
$\Psi' \ge 0.9002.$
\end{lemma}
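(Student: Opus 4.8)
The plan is to reduce the claim to a one-dimensional problem and then to a finite check at the breakpoints of the two mappings. Both $\sum_i Q_i r_i^{My}$ and $\sum_i Q_i r_i^{*}$ are homogeneous of degree one in $(Q_1,Q_2)$, so I would divide through by $Q_1$ and set $\theta = Q_2/Q_1$, giving
\begin{equation*}
\Psi'(\theta) = \frac{r_1^{My}(\theta) + \theta\, r_2^{My}(\theta)}{r_1^{*}(\theta) + \theta\, r_2^{*}(\theta)},
\end{equation*}
where $\mathbf{r}^{My}(\theta)$ and $\mathbf{r}^{*}(\theta)$ are the corner points assigned to the ratio $\theta$ by the Myopic mapping (Figs.~\ref{Fig:myopic_map}, \ref{Fig:myopic_map2}) and the FBDC mapping (Figs.~\ref{Fig:optimal_map}, \ref{Fig:optimal_map2}), respectively. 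Since the system and both mappings are invariant under swapping the queues, i.e.\ under $\theta \mapsto 1/\theta$ together with $(r_1,r_2)\mapsto (r_2,r_1)$ and $b_k \leftrightarrow b_{5-k}$, the ratio satisfies $\Psi'(\theta)=\Psi'(1/\theta)$, so it suffices to minimize $\Psi'$ over $\theta \in [1,\infty)$, where the active corners are $b_2,b_1,b_0$; the range $\theta\in[0,1]$ follows by symmetry.

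Next I would exploit the piecewise-linear structure. The denominator $r_1^{*}+\theta r_2^{*}$ equals $\max_{b}(r_1^{b}+\theta r_2^{b})$ over the corners $b$, since the FBDC mapping selects the weight-maximizing corner; as a pointwise maximum of affine functions it is continuous and piecewise linear in $\theta$, with breakpoints at the FBDC thresholds $1$, $(1+\epsilon-\epsilon^2)/(1-\epsilon)$, $(1-\epsilon)^2/\epsilon$ (for $\epsilon<\epsilon_c$) or $1$, $(1-\epsilon)(3-2\epsilon)$ (for $\epsilon\ge\epsilon_c$). The numerator is affine on each interval of the Myopic partition, whose breakpoints on $[1,\infty)$ are $1$, $(2-\epsilon)/(1-\epsilon)$, $(1-\epsilon)/\epsilon$ (for $\epsilon<\epsilon_c$) or $1$, $(1-\epsilon)/\epsilon$ (for $\epsilon\ge\epsilon_c$). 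On any interval of the common refinement of the two partitions both corners are fixed, so $\Psi'(\theta)=(a+b\theta)/(c+d\theta)$ for constants $a,b,c,d$; its $\theta$-derivative equals $(bc-ad)/(c+d\theta)^2$, which has constant sign. Hence $\Psi'$ is monotone on each such interval, and its infimum over $[1,\infty)$ is attained as a one-sided limit at one of these finitely many breakpoints.

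Then I would carry out the finite evaluation. On intervals where the two mappings agree, $\mathbf{r}^{My}=\mathbf{r}^{*}$ and $\Psi'\equiv 1$, so only the intervals where the Myopic and FBDC thresholds differ contribute a value below one; there the Myopic policy commits to a corner that is suboptimal for the weights $(Q_1,Q_2)$. Comparing the two threshold lists identifies the finitely many corner pairs $(\mathbf{r}^{My},\mathbf{r}^{*})$ that arise at each breakpoint, and substituting the explicit corner coordinates implied by Theorem~\ref{thm:stab} turns $\Psi'$ at each breakpoint into an explicit rational function of $\epsilon$. I would then minimize each such rational function over its applicable range (the regimes $0<\epsilon<\epsilon_c$ and $\epsilon_c\le\epsilon<1/2$ handled separately): an elementary single-variable calculus step, setting the derivative to zero and also checking the regime endpoints $\epsilon\to 0$, $\epsilon=\epsilon_c$, $\epsilon\to 1/2$. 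The smallest value over all breakpoints and both regimes is the stated bound $0.9002$.

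The main obstacle is the bookkeeping in this last step rather than any single hard inequality: one must correctly pair, at every breakpoint, the suboptimal corner chosen by the Myopic policy with the optimal corner chosen by FBDC, and then locate the global worst case among several rational functions of $\epsilon$. The binding case is expected to occur at an interior value of $\epsilon$, so the delicate part is confirming that the interior critical point of the relevant rational function---and not some competing breakpoint---yields the overall minimum, and that this minimum does not dip below $0.9002$.
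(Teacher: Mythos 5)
Your strategy is the same as the paper's own: observe that $\Psi'=1$ wherever the Myopic and FBDC mappings select the same corner, and on each ``discrepant'' interval of $Q_2/Q_1$ minimize a ratio of two linear functions of $(Q_1,Q_2)$ over that interval and over $\epsilon$. Your scaffolding --- homogeneity to $\theta=Q_2/Q_1$, the swap symmetry $\theta\mapsto 1/\theta$, the observation that the FBDC denominator is $\max_b(r_1^b+\theta r_2^b)$, and monotonicity of $(a+b\theta)/(c+d\theta)$ forcing the infimum onto one-sided limits at breakpoints --- is a cleaner justification of what the paper does implicitly when it evaluates each discrepant ratio at its worst endpoint. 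Methodologically the two proofs coincide.

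The gap sits in the only step that carries the content of the lemma: the finite evaluation, which you leave unexecuted, and your prediction of where it binds is wrong. Carrying out your own plan for the worst pair (Myopic chooses $b_2$, FBDC chooses $b_1$, on $\frac{1+\epsilon-\epsilon^2}{1-\epsilon}<\theta<\frac{2-\epsilon}{1-\epsilon}$, which is nonempty for all $\epsilon<\epsilon_t$): with $b_2=\big(\tfrac{(1-\epsilon)(3-2\epsilon)}{4(2-\epsilon)},\tfrac{3-2\epsilon}{4(2-\epsilon)}\big)$ and $b_1=\big(\tfrac{(1-\epsilon)^2}{4},\tfrac{2-\epsilon}{4}\big)$, the ratio is decreasing in $\theta$, so its infimum is the one-sided limit at $\theta=\frac{2-\epsilon}{1-\epsilon}$, namely
\begin{equation*}
\frac{(3-2\epsilon)(3-3\epsilon+\epsilon^2)}{(2-\epsilon)\left[(1-\epsilon)^3+(2-\epsilon)^2\right]},
\end{equation*}
which is increasing in $\epsilon$ near $0$ and tends to $9/10$ as $\epsilon\to 0$. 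So the binding case is the regime endpoint $\epsilon\to 0$, not an interior critical point of a rational function, and the infimum of $\Psi'$ over all admissible $(\epsilon,\theta)$ is exactly $0.9$, approached but never attained. Consequently no faithful execution of your plan can return $0.9002$; the honest conclusion is $\Psi'>0.9$, with no uniform margin above $0.9$. To be fair, this is a defect of the paper as well --- its proof simply asserts ``$\ge 0.9002$'' for this same two-variable minimization without derivation --- but your proposal inherits it, and the $0.0002$ margin is not cosmetic: the proof of Theorem \ref{thm:Myopic_1LH} spends exactly that margin to absorb the $\delta_4(T)$ correction when passing from $\Psi'$ to $\Psi$.
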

\begin{figure}
\centering
\psfrag{1\r}[l][][.8]{$\!\!\!\!\frac{\epsilon}{1-\epsilon}$}
\psfrag{2\r}[l][][.8]{$\!\!\!\!\frac{1-\epsilon}{2-\epsilon}$}
\psfrag{3\r}[l][][.8]{$1$}
\psfrag{4\r}[l][][.8]{$\!\!\!\!\frac{2-\epsilon}{1-\epsilon}$}
\psfrag{5\r}[l][][.8]{$\!\!\!\!\frac{1-\epsilon}{\epsilon}$}
\psfrag{6\r}[l][][.8]{$b_5$} \psfrag{7\r}[l][][.8]{$b_4$}
\psfrag{8\r}[l][][.8]{$b_3$} \psfrag{9\r}[l][][.8]{$b_2$}
\psfrag{a\r}[l][][.8]{$b_1$} \psfrag{b\r}[l][][.8]{$b_0$}
\psfrag{c\r}[l][][1]{$\frac{Q_2}{Q_1}$}
\includegraphics[width=0.35\textwidth]{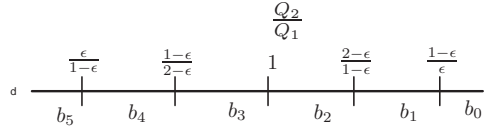}
\vspace{-1mm}\caption{Mapping from the queue sizes to the throughput
region corner points for the 1-Lookahead Myopic policy for
$\epsilon<\epsilon_c$.}\label{Fig:myopic_map}
\end{figure}

\begin{figure}
\centering
\psfrag{1\r}[l][][.8]{$\!\!\!\!\frac{\epsilon}{1-\epsilon}$}
\psfrag{3\r}[l][][.8]{$1$}
\psfrag{5\r}[l][][.8]{$\!\!\!\!\frac{1-\epsilon}{\epsilon}$}
\psfrag{6\r}[l][][.8]{$b_5$} \psfrag{7\r}[l][][.8]{$b_3$}
\psfrag{8\r}[l][][.8]{$b_2$} \psfrag{9\r}[l][][.8]{$b_0$}
\psfrag{c\r}[l][][1]{$\frac{Q_2}{Q_1}$}
\includegraphics[width=0.35\textwidth]{myopic_map2.eps}
\vspace{-1mm}\caption{Mapping from the queue sizes to the throughput
region corner points for the 1-Lookahead Myopic policy for
$\epsilon\ge\epsilon_c$.}\label{Fig:myopic_map2}
\end{figure}

Now consider the expression for $\Psi$ given by
\begin{equation*}
\Psi = \frac{\sum_{i} Q_i(t) r_i^{My} + \bigg(\sum_{i}Q_i(t)
\bigg)\delta_3(T)}{\sum_{i} Q_i(t) r_i^{*} + \bigg(\sum_{i}Q_i(t)
\bigg)\delta(T)}.
\end{equation*}
Since $\delta(T)$ and $\delta_3(T)$ are both decreasing with $T$, we
have that
\begin{equation*}
\Psi \ge \Psi' - \delta_4(T)
\end{equation*}
Choosing $T$ large enough so that $\delta_4(T) \le 0.0002$ we have
that $\Psi \ge \Psi'-0.0002 = 0.9000$ and hence $W_{My} \ge 0.9
W_{FBDC}$. Utilizing this in (\ref{eq:drift_My}) we have the
following for the Myopic policy \beqn \frac{\Delta_T(t)}{2T}
\!\!\leq\!\! (B \!+\! 2)T\!\! +\!\! \sum_{i} \!Q_i(t) \lambda_i\! -
0.9\!\!\sum_{i}\! Q_i(t) r_i^{*} \!+\! \delta_5\sum_{i}Q_i(t),
\nonumber\eeqn where $\delta_5=0.9\delta$ is a very small and
positive number (decreasing with $T$). Now for
$(\lambda_1,\lambda_2)$ strictly inside the 0.9 fraction of the
$\delta_5$-\emph{stripped} stability region,
there exist a small $\vec{\xi} >0$ such that $(\lambda_1,\lambda_2)
+ (\xi,\xi) = 0.9(r_1,r_2) -(\delta_5,\delta_5)$, for some
$\mathbf{r}=(r_1,r_2) \in \mathbf{\Lambda_s}$. Substituting this
expression for $(\lambda_1,\lambda_2)$ and using $\sum_{i} Q_i(t)(r-
r_i^*) \le 0$ we have,
\beqn \frac{\Delta_T(t)}{2T} \leq (B+2)T - \Big(\sum_{i}Q_i(t)
\Big)\xi. \nonumber \eeqn Therefore, the system is stable for
$\lambda$ inside at least the 0.9 fraction of
$\delta_5$-\emph{stripped} throughput region where
$\delta_5(T)$ is a decreasing function of $T$. 

\begin{remark}
A similar analysis shows that the \emph{2-Lookahead Myopic Policy}
achieves at least $94\%$ of $\mathbf{\Lambda}_s$, while the
\emph{3-Lookahead Myopic Policy} achieves at least $96\%$ of
$\mathbf{\Lambda}_s$. The $k$-Lookahead Myopic Policy is the same as
before except that the following weight functions are used for
scheduling decisions: Assuming the server is with queue 1 at time
slot $t$,\\
$W_1(t)
=Q_1(jT)\big(C_1(t)+\sum_{\tau=1}^k\mathbb{E}\big\{C_1(t+\tau)|C_1(t)\}\big)$
and $W_2(t) =Q_2(jT)\sum_{\tau=1}^k\mathbb{E}\big
\{C_2(t+\tau)|C_2(t)\big\}$.
\end{remark}

\section{Numerical Results}\label{Sec:Sim}
\begin{figure}
\centering
\includegraphics[width=0.4\textwidth]{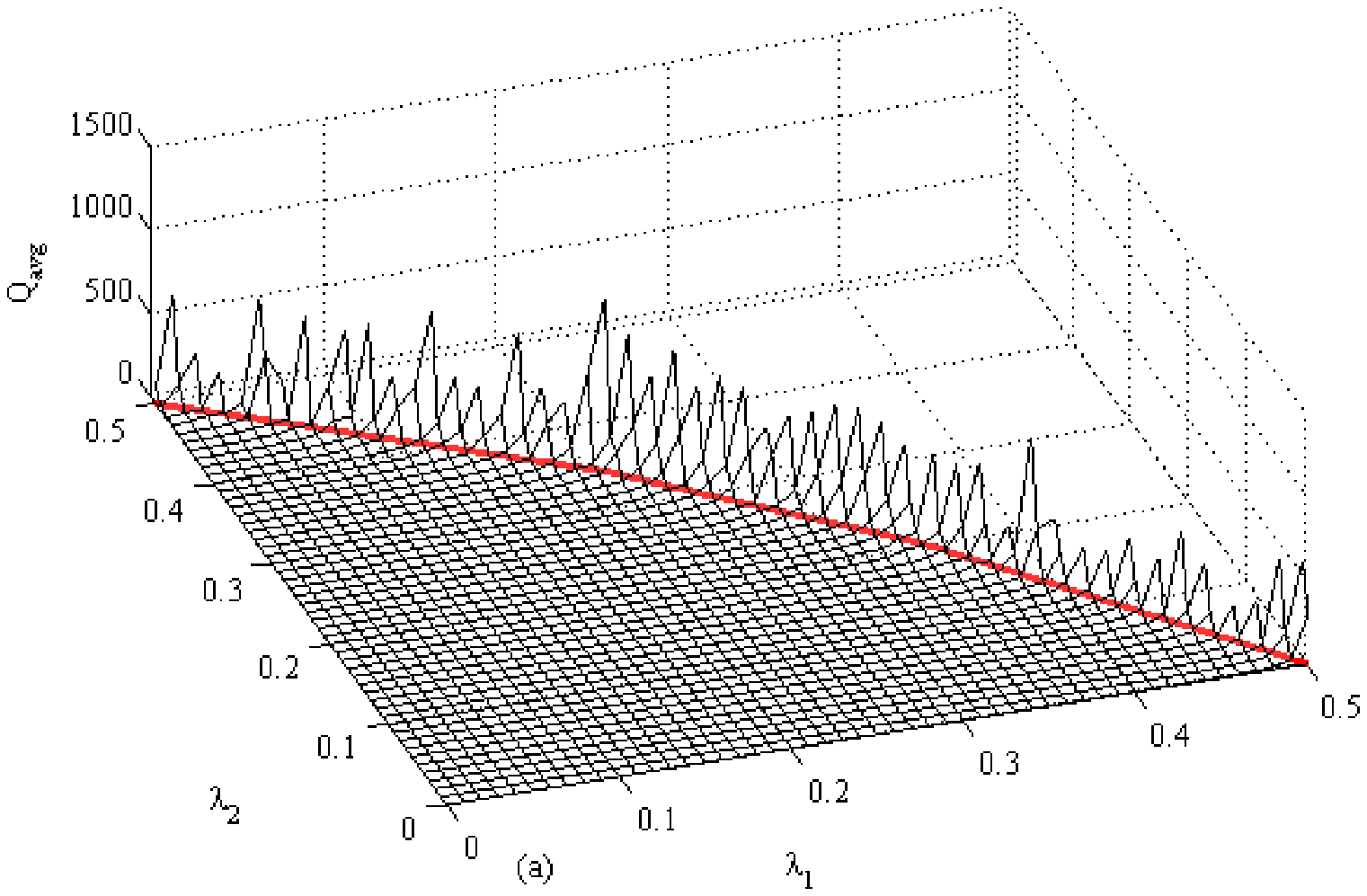}\\
\vspace{-0.5mm}
\includegraphics[width=0.4\textwidth]{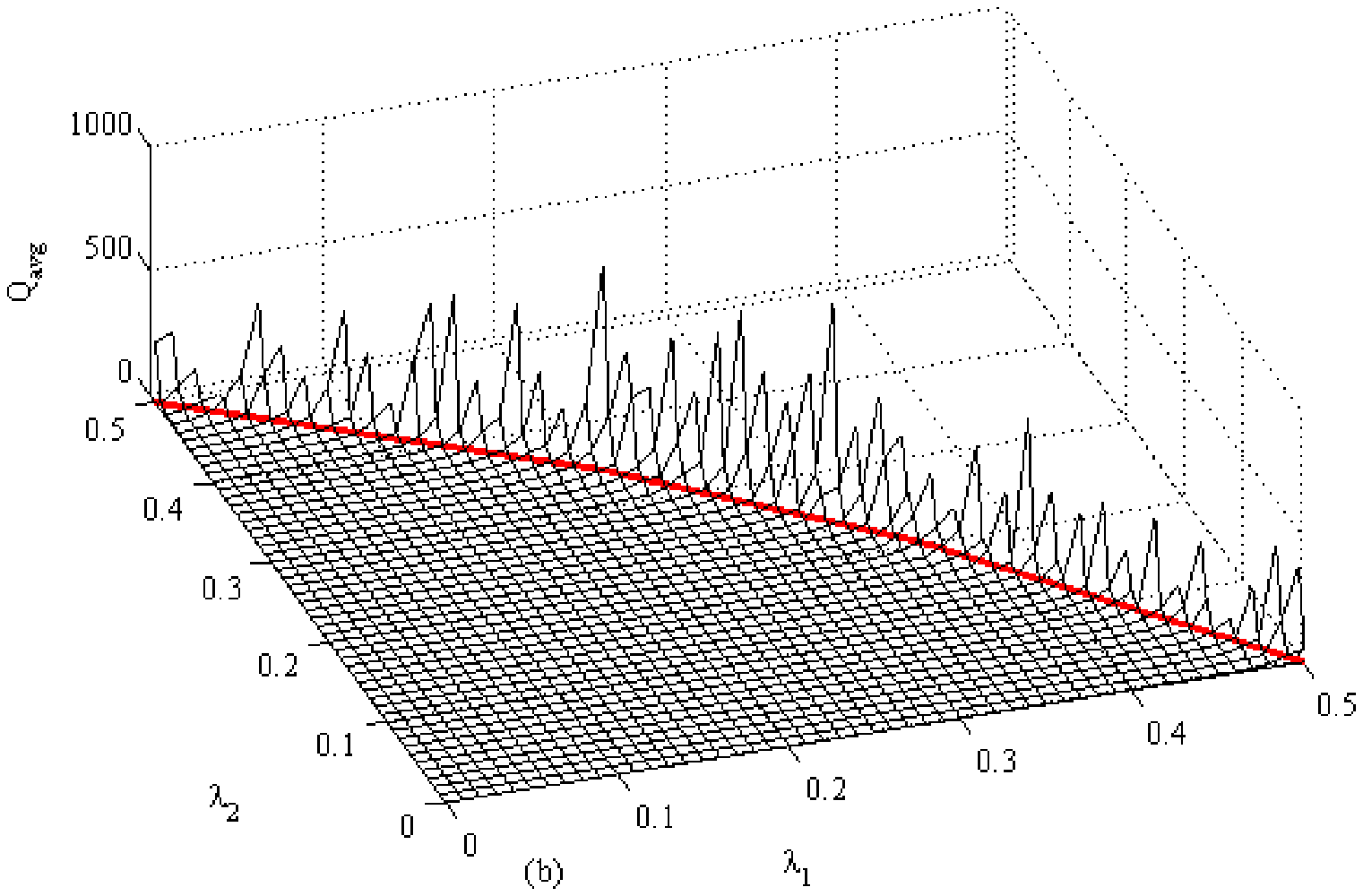}
\vspace{-2.5mm}\captionsetup{font=footnotesize}\caption{The total
average queue size for (a) the FBDC policy and (b) the Myopic Policy
for $T=10$ and $\epsilon = 0.40$.}\label{Fig:FBDCvsMyopicT10eps040}
\end{figure}
\begin{figure}
\centering
\includegraphics[width=0.4\textwidth]{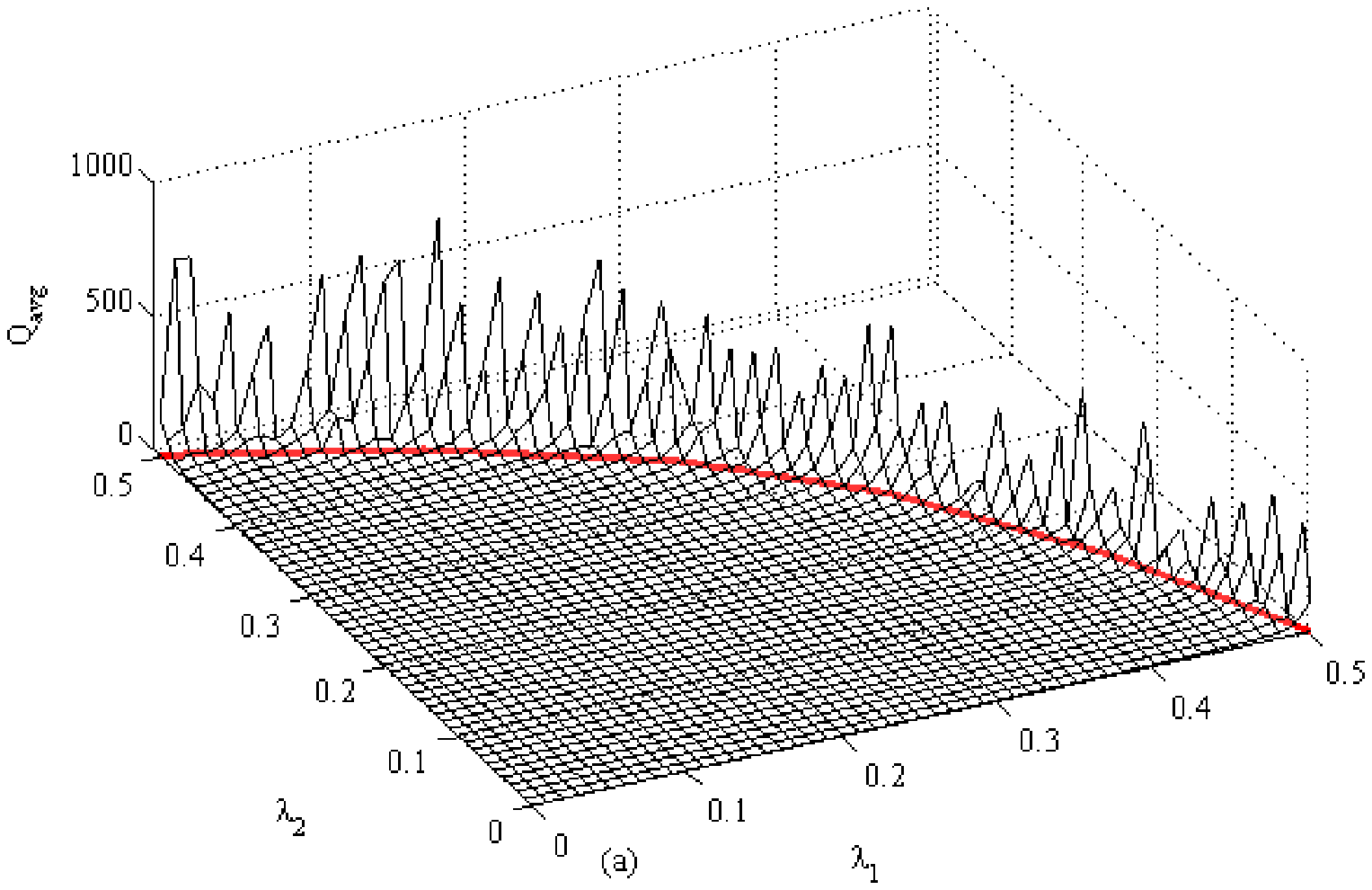}\\
\vspace{-0.5mm}
\includegraphics[width=0.4\textwidth]{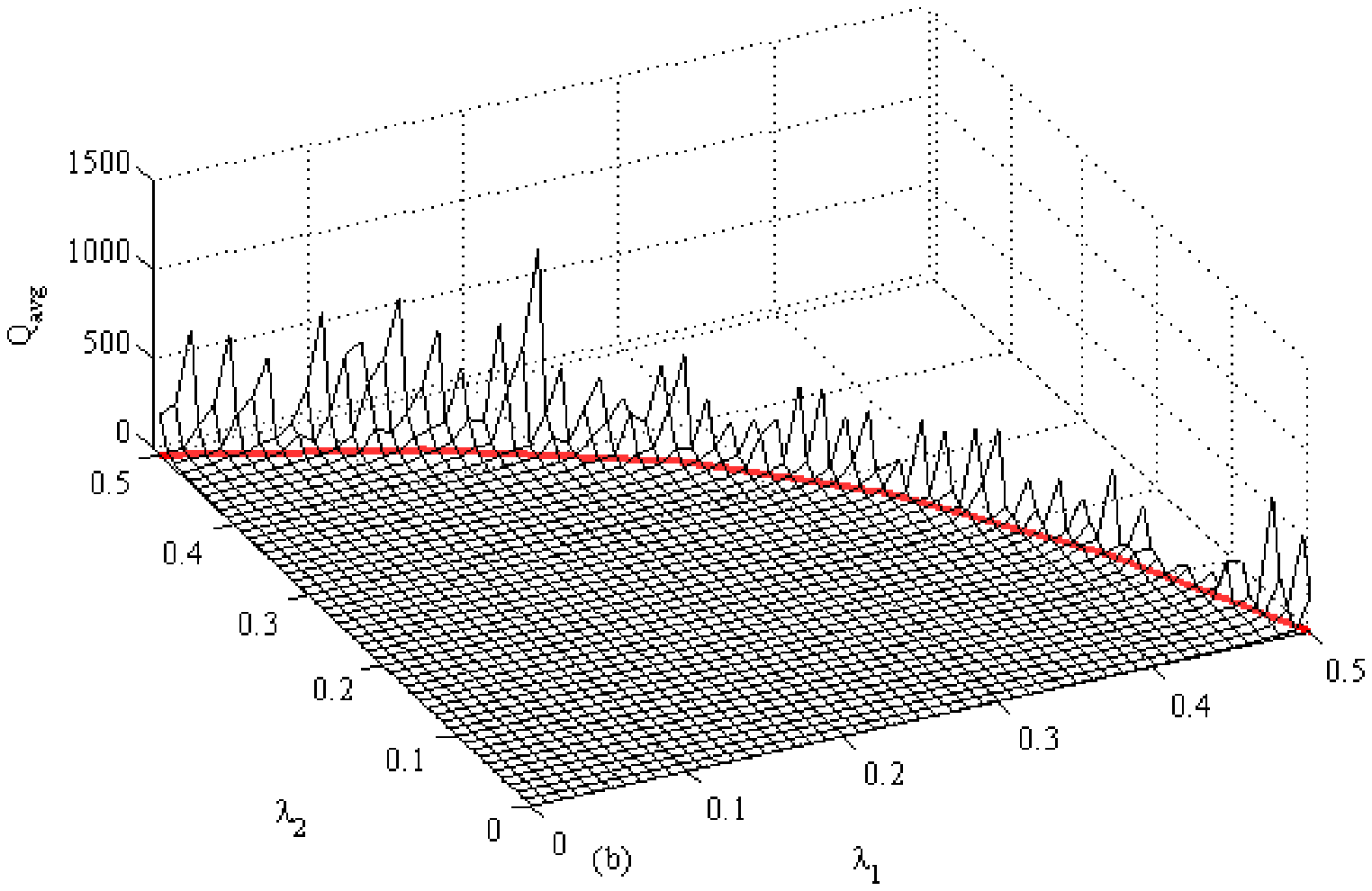}
\vspace{-2.5mm}\captionsetup{font=footnotesize}\caption{The total
average queue size for (a) the FBDC policy and (b) the Myopic policy
for $T=25$ and $\epsilon = 0.25$.}\label{Fig:FBDCvsMyopicT25eps025}
\end{figure}

In this section we present simulation results for the FBDC and the 1-Lookahead Myopic policies. We also
present numerical results that show the stability region for different $\epsilon$ values.
We performed simulation experiments that present average queue
occupancy results for the FBDC and the Myopic policies. We first
verified the correctness of the simulation model by confirming that
the FBDC policy achieves the full stability region in the simulation
results and then performed experiments for the 1-Lookahead Myopic (OLM) policy. 
In all the reported results, we have $(\lambda_1, \lambda_2) \in \mathbf{\Lambda}_s$ with $0.01$ increments.
For each point at the boundary of $\mathbf{\Lambda_s}$, we simulated one
point outside the stability region. Furthermore, for each
data point, the arrival processes were i.i.d., the channel processes
were Markovian as in Fig. \ref{Fig:channel_MC} and the simulation
length was
100,000 slots. 

Fig.\ \ref{Fig:FBDCvsMyopicT10eps040}~(a) presents the total average
queue size,
$\mathbf{Q}_{avg}\triangleq\sum_{t=1}^{100K}(Q_1(t)+Q_2(t))/t$,
under the FBDC policy for $\epsilon=0.40>\epsilon_c$. The boundary
of the stability region is shown by (red) lines on the two dimensional
$\lambda_1-\lambda_2$ plane. We observe that the average queue sizes
are small for all $(\lambda_1,\lambda_2) \in \mathbf{\Lambda}_s$ and
the big jumps in queue sizes occur for points outside
$\mathbf{\Lambda}_s$. Fig.\ \ref{Fig:FBDCvsMyopicT10eps040}~(b)
presents the performance of the OLM policy for the
same system. 
The
simulation results suggest that 
there is no appreciable difference between the stability regions of the
FBDC and the OLM policies. 
Note that the total average queue size is proportional to the
average delay in the system through Little's law. For these two
figures, the average delay under the OLM policy is less than that under the FBDC policy 
for
$86\%$ of all arrival rates considered. 
For the same system we also simulated a \emph{non-frame-based}
Myopic policy that utilizes the queue length information in the
\emph{current} time slot for the weight calculations in
(\ref{eq:myopic}). This implementation of the OLM policy preserves a
similar stability region to Fig.~\ref{Fig:FBDCvsMyopicT10eps040}~(b)
while having delay results at most as much as the FBDC policy for
$96\%$ of all arrival rates.
%
When the current queue lengths are used in the scheduling decisions,
the Myopic policy adapts to changes in the system dynamics more
quickly, therefore, better delay performance is expected.

Fig.\ \ref{Fig:FBDCvsMyopicT25eps025}~(a) shows the total average
queue size under (a) the FBDC policy and (b) the OLM policy for
$\epsilon=0.25<\epsilon_c$. Again this result suggest that the OLM
policy is achieving the full stability region. In this case the
regular and the non-frame-based implementations of the OLM policy
outperformed the FBDC policy in terms of delay for $81\%$ and $96\%$
of all arrival rates considered respectively.
%
%
%
%
These delay results show that the OLM policy is not only simpler to
implement than the FBDC policy, but it can also be more delay
efficient.

\begin{figure}
\centering
\includegraphics[width=0.4\textwidth]{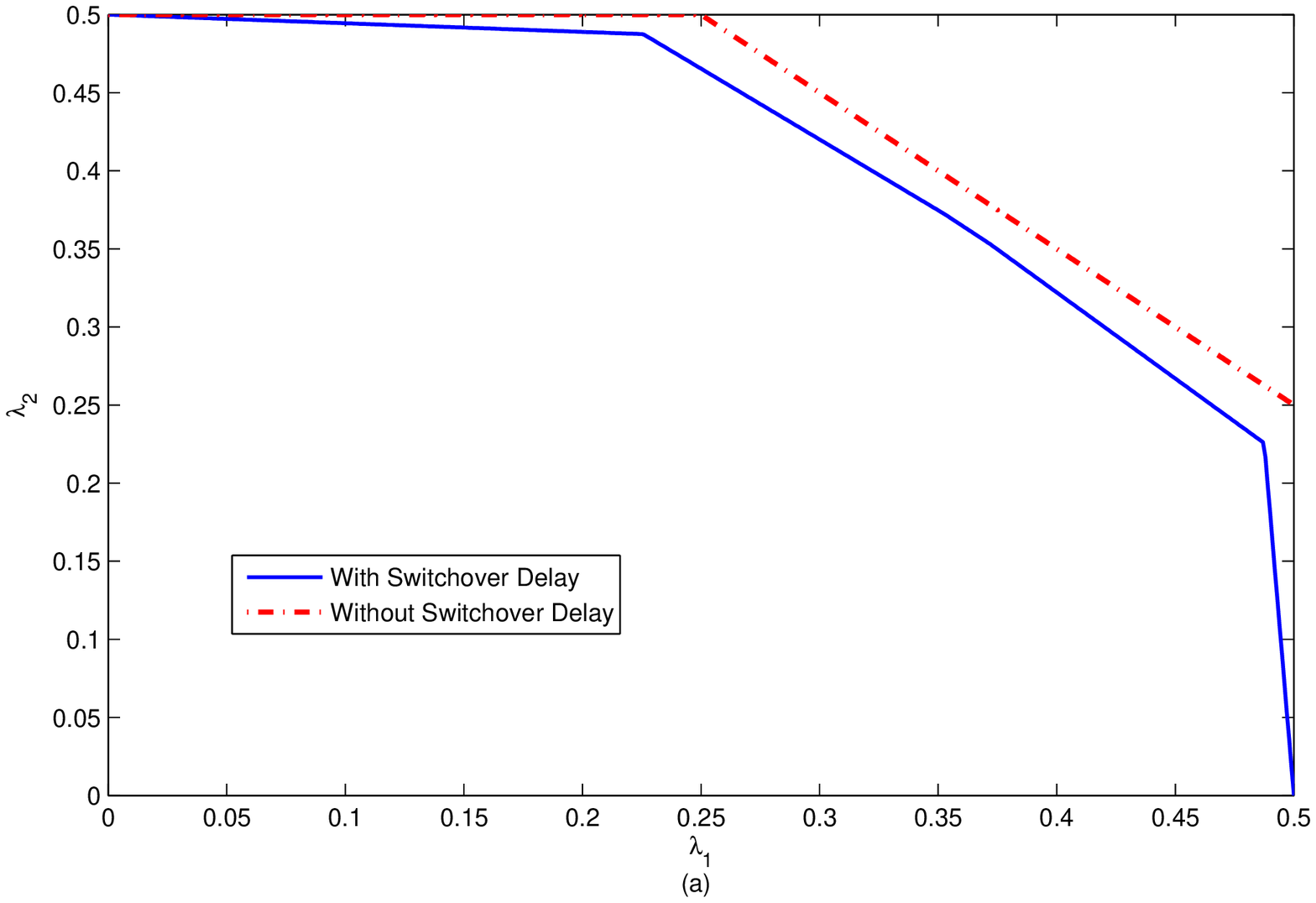}\\
\vspace{-0.5mm}
\includegraphics[width=0.4\textwidth]{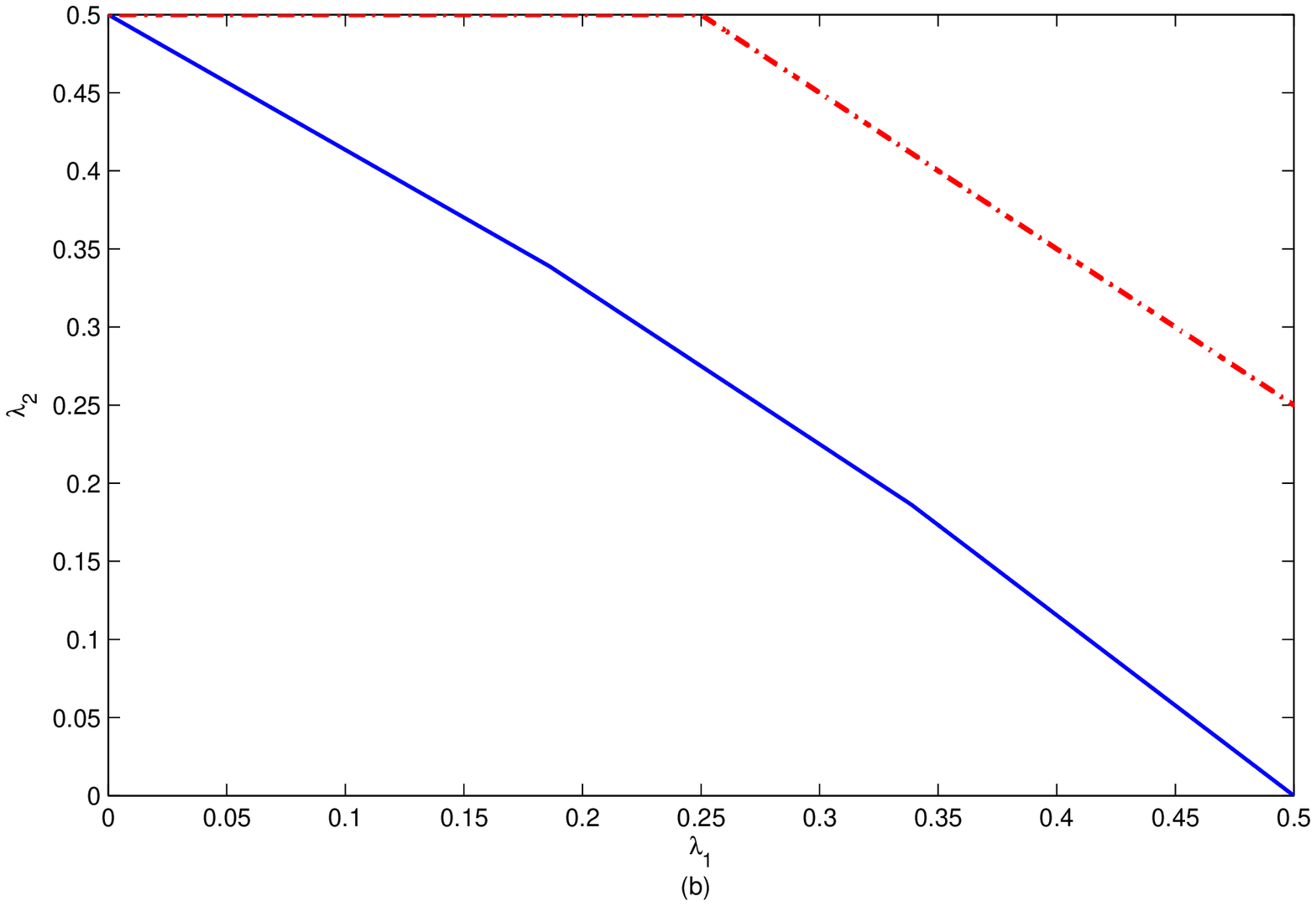}
\vspace{-2.5mm}\captionsetup{font=footnotesize}\caption{Stability
region under correlated channels with and without switchover time
for (a) $\epsilon=0.05$ and (b) $\epsilon =0.45$.}\label{Fig:cap_eps_0.05and0.45}
\end{figure}

Figures~\ref{Fig:cap_eps_0.05and0.45} (a) and (b) displays the stability region of the system
for $\epsilon = 0.05$ and $\epsilon = 0.45$ respectively. For very small channel correlation
($\epsilon \rightarrow 0.5$) the stability region tends to that of the i.i.d. channels case, whereas
for very large channel correlation ($\epsilon \rightarrow 0$) the stability region approaches that
of the no-switchover time case analyzed in \cite{tass93}.

\begin{figure}
\centering
\includegraphics[width=0.4\textwidth]{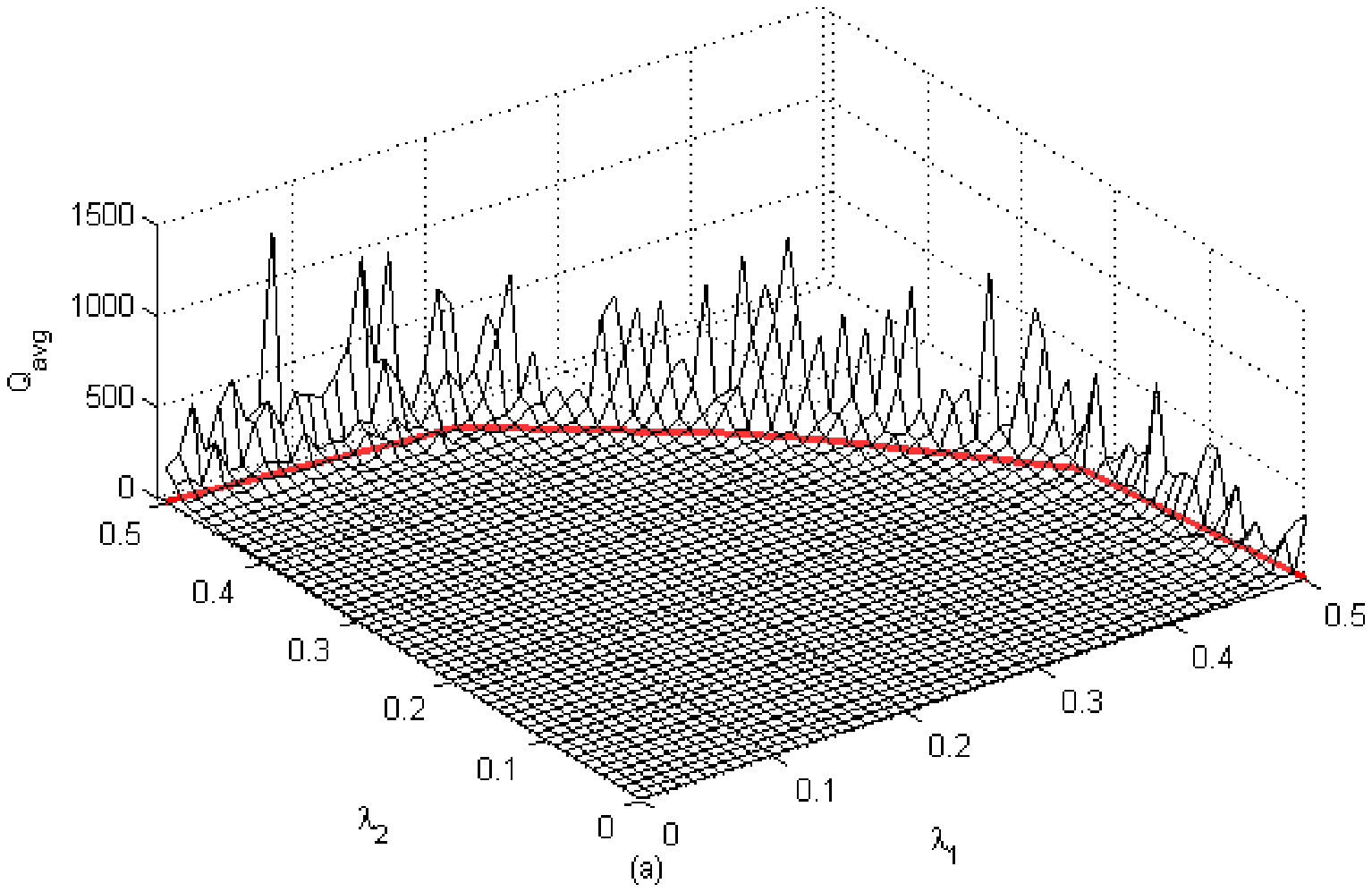}\\
\vspace{-0.5mm}
\includegraphics[width=0.4\textwidth]{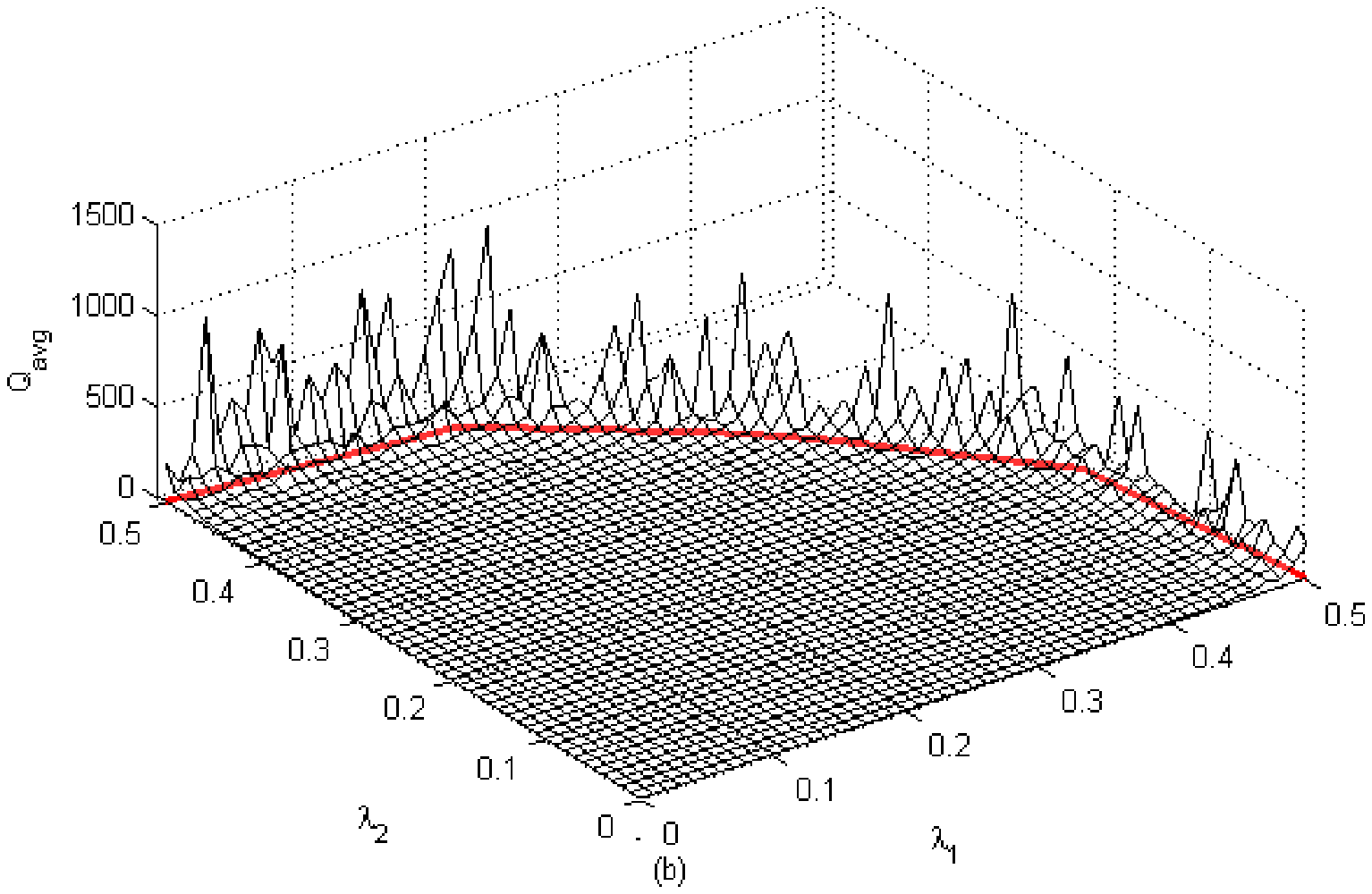}
\vspace{-2.5mm}\captionsetup{font=footnotesize}\caption{The total
average queue size for (a) the FBDC policy and (b) the Myopic policy
for $T=20$ and $\epsilon = 0.10$.}\label{Fig:FBDCvsMyopicT20eps010}
\end{figure}
Fig.\ \ref{Fig:FBDCvsMyopicT20eps010}~(a) shows the total average
queue size under (a) the FBDC policy and (b) the OLM policy for
$\epsilon=0.10 < \epsilon_c$. Again this result suggest that the OLM
policy is achieving the full stability region. In this case the
regular and the non-frame-based implementations of the OLM policy
outperformed the FBDC policy in terms of delay for $47\%$ and $91\%$
of all arrival rates considered respectively. This suggest that the delay advantage of
the Myopic policies are less pronounced for highly correlated channels.

\begin{figure}
\centering
\includegraphics[width=0.4\textwidth]{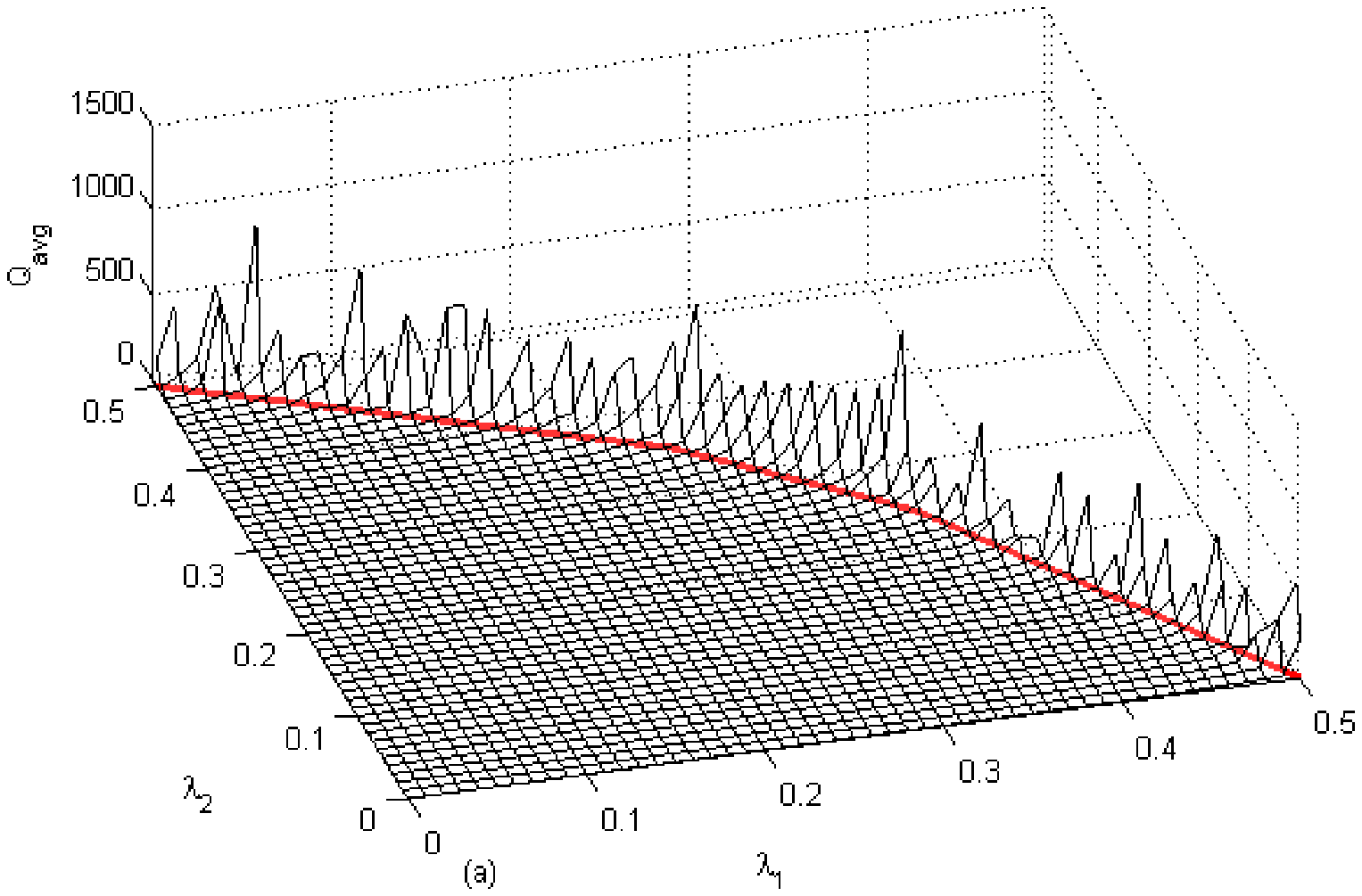}\\
\vspace{-0.5mm}
\includegraphics[width=0.4\textwidth]{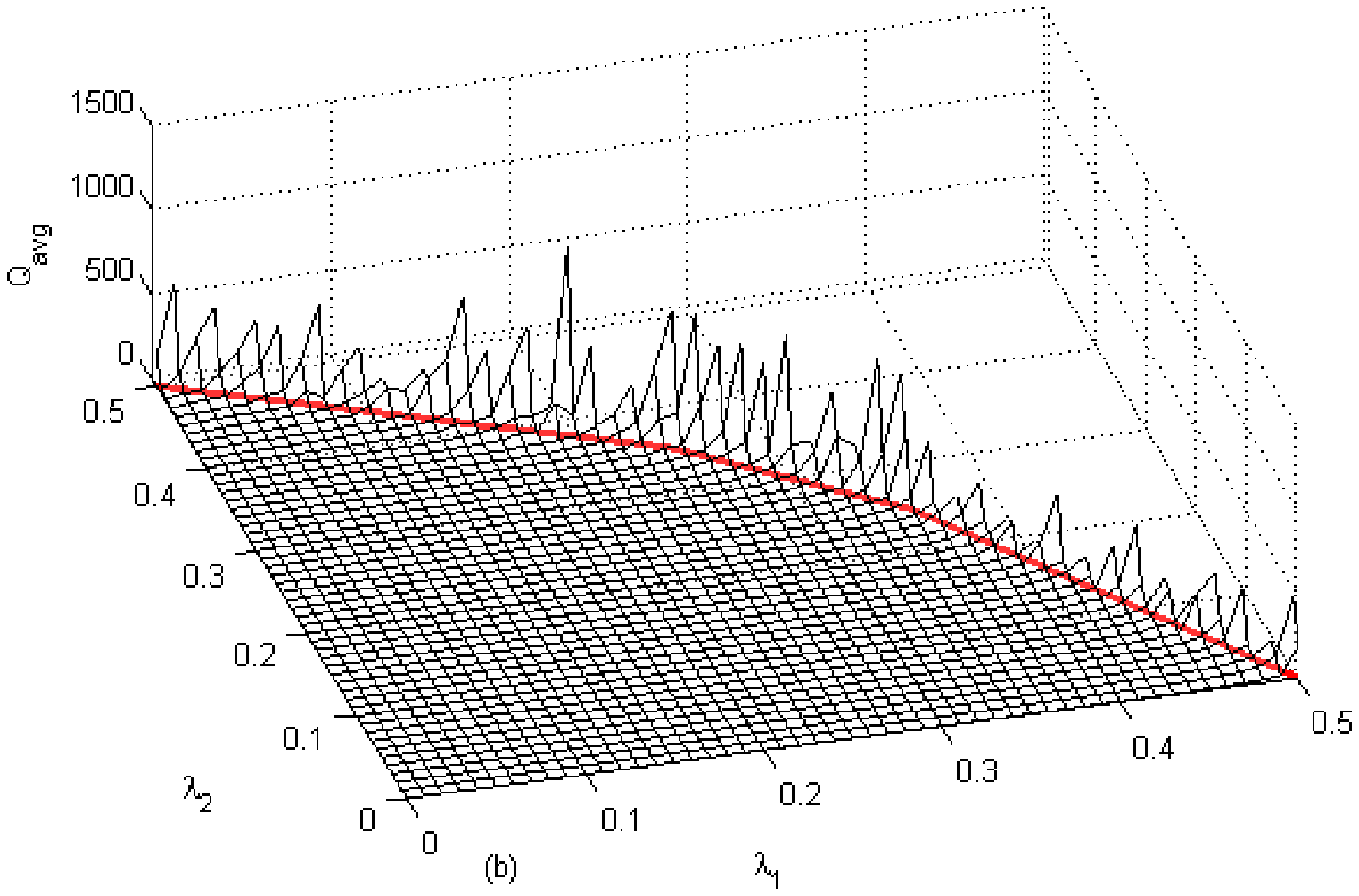}
\vspace{-2.5mm}\captionsetup{font=footnotesize}\caption{The total
average queue size for (a) the FBDC policy and (b) the Myopic policy
for $T=50$ and $\epsilon = 0.30$.}\label{Fig:FBDCvsMyopicT50eps030}
\end{figure}
Fig.\ \ref{Fig:FBDCvsMyopicT50eps030}~(a) shows the total average
queue size under (a) the FBDC policy and (b) the OLM policy for
$\epsilon=0.30 \simeq \epsilon_c$.  In this case the
regular and the non-frame-based implementations of the OLM policy
outperformed the FBDC policy in terms of delay for $94\%$ and $96\%$
of all arrival rates considered respectively. This suggest that the delay advantage of
the Myopic policies are more pronounced for less correlated channels.

\section{Conclusions}\label{Sec:Conc}

In this paper, we analyzed the scheduling problem with
\emph{randomly varying connectivity} and \emph{server switchover
time} for the first time in literature. We analytically
characterized the throughput region of the system using MDP theory,
developed a frame based dynamic control policy (FBDC) that is
throughput-optimal and developed much simpler \emph{Myopic Policies}
achieving $\gamma$-fraction of the throughput
region where $\gamma$ can be as high as $94\%$. 
For systems with correlated channels, throughput region
characterization in terms of the state action frequencies of the the
saturated system and the throughput-optimality of the FBDC policy
hold for general systems with many queues, arbitrary switching times
and more complicated Markovian channels. Similarly, the throughput
region as well as the throughput-optimality of the Gated policy for
the uncorrelated channels case hold for more general systems.

FBDC policy provides a new framework for developing
throughput-optimal policies for network control. 
For any queuing system whose corresponding saturated system is
finite-state Markovian, FBDC achieves stability based on a novel
idea of applying state action frequencies that solve an LP for the
saturated system.

In the future, we intend to derive analytical expressions for the
throughput regions of more general systems. In particular, for
systems with non-symmetric Markov channels or multiple-slot
switching times, analytical solution of the LP describing the
throughput region of the system could be possible. We intend to
develop throughput optimal Myopic policies for the current system
and for more general systems. Finally, scheduling and routing in
multihop wireless networks with dynamic channels and switchover
times is an interesting and challenging future direction.

\section*{Appendix A-Proof of Theorem \ref{thm:iid_ness} }\label{Sec:App-A}
%

We prove Theorem \ref{thm:iid_ness} for a more general system with
$N$-queues and travel time between queue-$i$ and queue-$j$ given by
$D_{ij}$ slots. We call the term $\sum_{i=1}^N \lambda_i/p_i$ the
system load and denote it by $\rho$ since it is the rate with which
the work is entering the system in the form of service slots.
We prove that a necessary condition for the stability of any policy is $\rho
=\sum_{i=1}^N \lambda_i/p_i<1$.
\begin{proof}
Since queues have memoryless channels, for any received packet, as
soon as the server switches to queue $i$, the expected time to ON
state is $1/p_i$. Namely, the time to ON state is a geometric random
variable with parameter $p_i$ and hence $1/p_i$ is essentially the
``service time per packet'' for queue-$i$. 
%
%
Therefore, the i.i.d. connectivity is essentially a geometric random
variable representing service time in a classical polling system. In
a multiuser single-server system \emph{with or without switchover
times}, with stationary arrivals whose average arrival rates are
$\lambda_i, i \in \{1,2\}$, and i.i.d. service times independent of
arrivals with average service times $1/p_i, i \in \{1,2\}$, a
necessary condition for stability is given by the system load,
$\rho$, less than 1. To see this, consider the polling system with
zero switchover times, stationary arrivals of rate $\lambda_i$ and
$i.i.d.$ service times of mean $1/p_i$. The throughput region of
this system is an upperbound on the throughput region of the
corresponding system with nonzero switchover times (for the same
sample path of arrival and channel processes, the system with zero
switchover time can achieve exactly the same departure process as
the system with nonzero switchover times by making the server idle
when necessary). A necessary condition for the stability of the
former system is $\rho = \lambda_1/p_1 +
\lambda_N/p_N+...+\lambda_1/p_N <1$, (e.g., \cite{Walrand}).
\end{proof}

\section*{Appendix B-Proof of Theorem \ref{thm:iid_suff} }

Again we prove the theorem for a more general system with multiple
queues and travel time between queue-$i$ and queue-$j$ given by
$D_{ij}$ slots. We prove that
Gated cyclic policy is stable if $\rho
=\sum_{i=1}^N \frac{\lambda_i}{p_i}$.
\begin{proof}
Let $m$ be the discrete time index for the $m$th time the mobile
stops for servicing a queue. The proof is similar to the stability
proof in \cite{AltKonsLiu92}. Let $T_m$ be the time slot number of
this mobile-node meeting times (at time $T_{m+1}$ the mobile meets
with the next node in the cycle and at time $T_{m+N}$ it comes back
to the same node). Let $I(m)$ be the i.d. of the node that the
mobile serves at time $T_m$ and let $S(Q_{I(m)}(T_m))$ be the
service time required to serve $Q_{I(m)}(T_m)$ packets at time
$T_{m}$. Since we have cyclic service, we specify one particular
order of service and simplify the notation for traveling times from
node $i$ to node $j$, $D_{ij}$ as $D_i$ denoting the time required
to move from node $i$ to the next node in the cycle. Also let
$D=\sum_{i=1}^N D_i$ be the total travel time in one cycle.

Since we have gated service, we obtain the following queue
evolution:
\begin{equation}\label{eq:queue evol}
\displaystyle \sum_{i=1}^N Q_{i}(T_{m+1}) = \sum_{i=1}^N Q_{i}(T_m)
+ \sum_{i=1}^N \Big( \sum_{t=T_m}^{T_m+S(Q_{I(m)}(T_m))+D_{I(m)}-1}
A_{i}(t) \Big) - Q_{I(m)}(T_m).
\end{equation}
Consider the following Lyapunov function:
\begin{equation}
\displaystyle L(\vec{Q}(T_{m})) = \sum_{i=1}^N
\frac{Q_i(T_{m})}{p_i}.
\end{equation}
The intuition behind this choice of Lyapunov function is that, given
the current queue sizes, it is the expected amount of service time
needed to serve what is currently in all the queues. From
(\ref{eq:queue evol}) we obtain,
\begin{equation}\label{eq:queue evol2}
\displaystyle \sum_{i=1}^N \frac{Q_{i}(T_{m+1})}{p_i} = \sum_{i=1}^N
\frac{Q_{i}(T_m)}{p_i} + \sum_{i=1}^N \Big(
\sum_{t=T_m}^{T_m+S(Q_{I(m)}(T_m))+D_{I(m)}-1} \frac{A_{i}(t)}{p_i}
\Big) - \frac{Q_{I(m)}(T_m)}{p_{I(m)}}.
\end{equation}
Taking expectations conditional on $\mathbf{Q}(T_m)$ we obtain,
\begin{equation}\label{eq:queue evol3}
\displaystyle \mathbb{E}\Big[\sum_{i=1}^N
\frac{Q_{i}(T_{m+1})}{p_i}\Big|\mathbf{Q}(T_m)\Big] = \sum_{i=1}^N
\frac{Q_{i}(T_m)}{p_i} + \sum_{i=1}^N \frac{\lambda_{i}}{p_i}
\Big(\frac{Q_{I(m)}(T_m)}{p_{I(m)}}+D_{I(m)} \Big) -
\frac{Q_{I(m)}(T_m)}{p_{I(m)}}.
\end{equation}
where we used the independence of the arrival and the channel
processes conditional on the current queue sizes. $S(Q_{I(m)}(T_m))$
is a random variable that depends on arrivals before $T_m$ but not
on arrivals after $T_m$ as the arrival processes are i.i.d. over
time. Therefore, $\textrm{E}[S(Q_{I(m)}(T_m))|Q_{I(m)}(T_m)]$ is
nothing but $\frac{Q_{I(m)}(T_m)}{p_{I(m)}} $. Simplifying we obtain
\begin{equation}\label{eq:queue evol4}
\displaystyle \mathbb{E}\Big[\sum_{i=1}^N
\frac{Q_{i}(T_{m+1})}{p_i}\Big|\mathbf{Q}(T_m)\Big] = \sum_{i=1}^N
\frac{Q_{i}(T_m)}{p_i} +  \rho D_{I(m)} -
\frac{Q_{I(m)}(T_m)}{p_{I(m)}}(1-\rho).
\end{equation}
Now we write a similar expression for time $T_{m+2}$.
\begin{eqnarray}
\displaystyle \mathbb{E}\Big[\sum_{i=1}^N
\frac{Q_{i}(T_{m+2})}{p_i}\Big|\mathbf{Q}(T_{m})\Big]
\!\!\!\!\!\!\!\!\!&&= \mathbb{E}\bigg\{ \mathbb{E}\Big[ \sum_{i=1}^N
\frac{Q_{i}(T_{m+2})}{p_i}|\mathbf{Q}(T_{m+1})\Big] \Big|
\mathbf{Q}(T_m) \bigg\}\nonumber\\
&&= \mathbb{E}\bigg\{\sum_{i=1}^N \frac{Q_{i}(T_{m+1})}{p_i} + \rho
D_{I(m+1)} - \frac{Q_{I(m+1)}(T_{m+1})}{p_{I(m+1)}}(1-\rho)\Big|
\mathbf{Q}(T_m) \bigg\}. \label{eq:queue evol5}
\end{eqnarray}
Noting that $Q_{I(m+1)}(T_{m+1}) \ge Q_{I(m+1)}(T_{m})$ and using
(\ref{eq:queue evol4}), we have from (\ref{eq:queue evol5})
\begin{equation*}
\displaystyle \mathbb{E}\Big[\sum_{i=1}^N
\frac{Q_{i}(T_{m+2})}{p_i}\Big|\mathbf{Q}(T_{m})\Big] \le
\sum_{i=1}^N \frac{Q_{i}(T_{m})}{p_i} + \rho (D_{I(m)}+D_{I(m+1)}) -
(1-\rho) \Big ( \frac{Q_{I(m)}(T_{m})}{p_{I(m)}}+
\frac{Q_{I(m+1)}(T_{m})}{p_{I(m+1)}} \Big).
\end{equation*}
Repeating the same argument we obtain a drift condition over one
cycle given by
\begin{equation}\label{eq:queue evol6}
\displaystyle \mathbb{E}\Big[\sum_{i=1}^N
\frac{Q_{i}(T_{m+N})}{p_i}-\frac{Q_{i}(T_{m})}{p_i}\Big|\mathbf{Q}(T_{m})\Big]
\le \rho D - (1-\rho) \sum_{i=0}^{N-1}
\frac{Q_{I(m+j)}(T_{m})}{p_{I(m+j)}}.
\end{equation}
Hence, we obtain a negative drift as soon as 
\begin{equation}\label{eq:temp7}
\displaystyle \sum_{i=1}^N \frac{Q_{i}(T_m)}{p_i} >
\rho\frac{D}{1-\rho}.
\end{equation}
Therefore using the Lyapunov stability (e.g., \cite[Theorem 3]{neely05}), the queue length
processes at discrete times indexed by $m$ satisfies an $N$-step
negative Lyapunov drift and therefore they are stable. Now consider
an arbitrary time slot $t\in \ (T_m,T_{m+1})$. We have that $Q(t)
\le Q(T_{m+1})$ since there is guaranteed to be no service between
$T_m$ and $T_{m+1}$. Therefore we have $\mathbb{E}\{ Q(t)\} \le
\mathbb{E}\{ Q(T_{m+1})\}$. Therefore, the system is stable as long
as $\rho <1$.
\end{proof}

\section*{Appendix C-Proof of Theorem \ref{thm:stab}}\label{Sec:App-C}

We enumerate the states as follows:
\begin{equation}\label{eq:state_enum}
\begin{array}{llll}
s=(1,1,1)\equiv1,\;\;\; &s=(1,1,0)\equiv2,\;\;\;
&s=(1,0,1)\equiv3,\;\;\;  & s=(1,0,0)\equiv4,\;\;\;\\
s=(2,1,1) \equiv5 & s=(2,1,0)\equiv6 & s=(2,0,1) \equiv7& s=(2,0,0) \equiv8.\\
\end{array}
\end{equation}
We rewrite the balance equations in (\ref{eq:LP}) in more details.
\beqn x(1;1)+x(1;0) = (1-\epsilon)^2 \big(x(1;1)+x(5;0)\big)
\!\!\!\!\!&+&
\!\!\!\!\!\epsilon(1-\epsilon)\big(x(2;1)+ x(6;0)\big)\nonumber\\
+ \epsilon(1-\epsilon)\big(x(3;1)+x(7;0)\big) \!\!\!\!\!&+&\!\!\!\!\! \epsilon^2\big(x(4;1)+x(8;0)\big)\label{eq:saf1}  \\
x(2;1)+x(2;0) = \epsilon(1-\epsilon) \big(x(1;1)+x(5;0)\big)
\!\!\!\!\!&+&\!\!\!\!\!
(1-\epsilon)^2\big(x(2;1) + x(6;0)\big) \nonumber\\
\!\!\!\!\!+ \epsilon^2\big(x(3;1)+x(7;0)\big)
\!\!\!\!\!&+&\!\!\!\!\!
\epsilon(1-\epsilon)\big(x(4;1)+x(8;0)\big)\label{eq:saf2}\\
\!\!\!\!\!&\ldots& \nonumber\\
x(5;1)+x(5;0) = (1-\epsilon)^2 \big(x(5;1)+x(1;0)\big)
\!\!\!\!\!\!\!\!&+\!\!\!\!\!&
\epsilon(1-\epsilon)\big(x(6;1)+ x(2;0)\big)\nonumber\\
+ \epsilon(1-\epsilon)\big(x(7;1)+x(3;0)\big) \!\!\!\!\!&+&\!\!\!\!\! \epsilon^2\big(x(8;1)+x(4;0)\big) \label{eq:saf3} \\
x(7;1)+x(7;0)= \epsilon(1-\epsilon) \big(x(5;1)+x(1;0)\big)
\!\!\!\!\!&+&\!\!\!\!\!
\epsilon^2\big(x(6;1) + x(2;0)\big) \nonumber\\
+ (1-\epsilon)^2 \big(x(7;1)+x(3;0)\big) \!\!\!\!\!&+&\!\!\!\!\!
\epsilon(1-\epsilon)\big(x(8;1)+x(4;0)\big)\label{eq:saf4}\\
&\ldots\nonumber  \eeqn\\
The following equations hold for each channel state pair
$(C_1,C_2)$. \beqn
x(1;1)+x(1;0)+ x(5;1)+x(5;0) = 1/4 \label{eq:state_1}\\
x(2;1)+x(2;0)+ x(6;1)+x(6;0) = 1/4 \label{eq:state_2}\\
x(3;1)+x(3;0)+ x(7;1)+x(7;0) = 1/4 \label{eq:state_3}\\
x(4;1)+x(4;0)+ x(8;1)+x(8;0) = 1/4 \label{eq:state_4}
 \eeqn\\
 \\
Let $u_1=(x(1;1)+x(2;1)$ and $u_2=(x(5;1)+x(7;1))$. Summing up
(\ref{eq:saf1}) with (\ref{eq:saf2}) and (\ref{eq:saf3}) with
(\ref{eq:saf4}) we have
\begin{eqnarray*}
\epsilon u_1 \!\!\!\!&=&\!\!\!\! - \big(x(1;0)+x(2;0)\big) +
\epsilon\big( x(3;1)+x(4;1) \big) + \epsilon\big(x(7;0)+ x(8;0)
\big) + (1-\epsilon) \big( x(5;0)+x(6;0) \big)\\
\epsilon u_2 \!\!\!\!&=&\!\!\!\! - \big(x(5;0)+x(7;0)\big) +
\epsilon\big( x(6;1)+x(8;1) \big) + \epsilon\big(x(2;0)+ x(4;0)
\big) + (1-\epsilon) \big( x(1;0)+x(3;0) \big)\\
\end{eqnarray*}
\\
Rearranging and using (\ref{eq:state_1})-(\ref{eq:state_4}) we have
\begin{eqnarray}
u_1 \!\!\!\!&=&\!\!\!\! \frac{1-\epsilon}{2}+\epsilon\big(
x(3;1)+x(4;1)+x(7;0)+ x(8;0)\big) - (2-\epsilon) \big( x(1;0)+x(2;0) \big) - (1-\epsilon) \big( x(5;1)+x(6;1)\big)\nonumber\\
\label{eq:saf_work1}\\
u_2 \!\!\!\!&=&\!\!\!\! \frac{2-\epsilon}{4}+\epsilon\big(
x(2;0)-x(4;1) + x(6;1)-x(8;0) \big) - (2-\epsilon) \big(
x(5;0)+x(7;0) \big) - (1-\epsilon) \big(
x(1;1)+x(3;1)\big)\nonumber\\
\label{eq:saf_work2}
\end{eqnarray}
\\
\\
Using (\ref{eq:saf3}) in (\ref{eq:saf_work1})  and (\ref{eq:saf1})
in (\ref{eq:saf_work2}) we have
\begin{eqnarray}
u_1 \!\!\!\!&=&\!\!\!\! \frac{1-\epsilon}{2}+\epsilon\big(
x(3;1)+x(4;1)+x(7;0)+ x(8;0)\big) -
\frac{\epsilon(1-\epsilon)}{2-\epsilon} \big( x(4;0)+x(8;1) \big)
-\frac{(1-\epsilon)(3-2\epsilon)}{2-\epsilon} x(6;1)\nonumber \\
&+& \frac{1-\epsilon}{\epsilon(2-\epsilon)} x(5;0)
-\frac{1+\epsilon-\epsilon^2}{\epsilon(2-\epsilon)} x(1;0) -
\frac{(1-\epsilon)^2}{2-\epsilon} \big( x(3;0)+x(7;1) \big)
-\Big( 2-\epsilon + \frac{(1-\epsilon)^2}{2-\epsilon}\Big) x(2;0)\label{eq:saf_work3} \\
u_2 \!\!\!\!&=&\!\!\!\! \frac{2-\epsilon}{4}+\epsilon\big( x(2;0)+
x(6;1)\big) - \Big(
\epsilon+\frac{\epsilon(1-\epsilon)}{2-\epsilon}\Big) \big(
x(4;1)+x(8;0) \big)
-\frac{(1-\epsilon)(3-2\epsilon)}{2-\epsilon} x(3;1)\nonumber \\
&+& \frac{1-\epsilon}{\epsilon(2-\epsilon)} x(1;0)
-\frac{1+\epsilon-\epsilon^2}{\epsilon(2-\epsilon)} x(5;0) -
\frac{(1-\epsilon)^2}{2-\epsilon} \big( x(2;1)+x(6;0) \big) -\Big(
2-\epsilon + \frac{(1-\epsilon)^2}{2-\epsilon}\Big)
x(7;0).\label{eq:saf_work4}
\end{eqnarray}
\\
\\
Using (\ref{eq:state_3}) and (\ref{eq:state_4}) in
(\ref{eq:saf_work3}) and (\ref{eq:state_2}) in (\ref{eq:saf_work4})
we have
\begin{eqnarray}
u_1 \!\!\!\!&=&\!\!\!\!
\frac{(1-\epsilon)(3-2\epsilon)}{4(2-\epsilon)}+ \Big(\epsilon+
\frac{\epsilon(1-\epsilon)}{2-\epsilon} \Big) \big(
x(4;1)+x(8;0)\big) +  \frac{1}{2-\epsilon} \big(x(3;1)+x(7;0)\big)\nonumber\\
&-&\frac{(1-\epsilon)(3-2\epsilon)}{2-\epsilon} x(6;1)  +
\frac{1-\epsilon}{\epsilon(2-\epsilon)} x(5;0)
-\frac{1+\epsilon-\epsilon^2}{\epsilon(2-\epsilon)} x(1;0)
-\Big( 2-\epsilon + \frac{(1-\epsilon)^2}{2-\epsilon}\Big) x(2;0)\label{eq:saf_work5} \\
u_2 \!\!\!\!&=&\!\!\!\! \frac{3-2\epsilon}{4(2-\epsilon)}
-\Big(\epsilon+ \frac{\epsilon(1-\epsilon)}{2-\epsilon} \Big) \big(
x(4;1)+x(8;0)\big) + \frac{1}{2-\epsilon} \big(x(2;0)+x(6;1)\big)\nonumber\\
&-&\frac{(1-\epsilon)(3-2\epsilon)}{2-\epsilon} x(3;1) +
\frac{1-\epsilon}{\epsilon(2-\epsilon)} x(1;0) -
\frac{1+\epsilon-\epsilon^2}{\epsilon(2-\epsilon)} x(5;0) - \Big(
2-\epsilon + \frac{(1-\epsilon)^2}{2-\epsilon}\Big)
x(7;0).\label{eq:saf_work6}
\end{eqnarray}
\\

Consider the LP objective function $\alpha_1 \big(x(1;1)+x(2;1)\big)
+ \alpha_2\big(x(5;1)+x(7;1)\big)$, and note that the solution to
this LP is a stationary deterministic policy for any given
$\alpha_1$ and $\alpha_2$. This means that, for any state $s$ either
$x(s;1)$ or $x(s;0)$ has to be zero. In order to maximize $\alpha_1
\big(x(1;1)+x(2;1)\big) + \alpha_2\big(x(5;1)+x(7;1)\big)$ we need

\begin{eqnarray*}
x(7;0) \!\!\!\!&=&\!\!\!\! 0 \;\;\;\;\textrm{if  }\;\; \frac{\alpha_2}{\alpha_1} \ge \frac{1}{(2-\epsilon)^2+(1-\epsilon)^2},\\
x(3;1) \!\!\!\!&=&\!\!\!\! 0 \;\;\;\;\textrm{if  }\;\;\frac{\alpha_2}{\alpha_1} \ge \frac{1}{(1-\epsilon)(3-2\epsilon)},\\
x(5;0) \!\!\!\!&=&\!\!\!\! 0 \;\;\;\;\textrm{if  }\;\;\frac{\alpha_2}{\alpha_1} \ge \frac{1-\epsilon}{1+\epsilon-\epsilon^2},\\
x(8;0)=x(4;1)\!\!\!\!&=&\!\!\!\! 0 \;\;\;\;\textrm{if  }\;\;\frac{\alpha_2}{\alpha_1} \ge 1,\\
x(6;0) \!\!\!\!&=&\!\!\!\! 0 \;\;\;\;\textrm{if  }\;\;\frac{\alpha_2}{\alpha_1} \ge (1-\epsilon)(3-2\epsilon),\\
x(1;1) \!\!\!\!&=&\!\!\!\! 0
\;\;\;\;\textrm{if}\;\;\frac{\alpha_2}{\alpha_1} \ge
\frac{1+\epsilon-\epsilon^2}{1-\epsilon}.
\end{eqnarray*}
\\
Note that we have
\begin{eqnarray*}
(2-\epsilon)^2+(1-\epsilon)^2 &\ge& (1-\epsilon)(3-2\epsilon) \ge 1\\
(2-\epsilon)^2+(1-\epsilon)^2 &\ge& \frac{1+\epsilon-\epsilon^2}{1-\epsilon} \ge 1\\
\end{eqnarray*}
holding for all $\epsilon \in [0,0.5]$. Consider the following two
cases:
\\
\\
\textbf{Case-1: $\epsilon > \epsilon_c = 1-\sqrt{2}/2$}
\\
In this case we have $(1-\epsilon)(3-2\epsilon) <
(1+\epsilon-\epsilon^2)/(1-\epsilon)$. This means that we have the
following optimal
policies depending on the value of $\alpha_2/\alpha_1$.\\
\\
$1 \le \frac{\alpha_2}{\alpha_1} \le (1-\epsilon)(3-2\epsilon)$:
\begin{equation*}
\begin{array}{llll}
\textrm{@queue-1}: (1,1,1):\textrm{stay}, &(1,1,0):\textrm{stay}, &(1,0,1):\textrm{switch}, &(1,0,0):\textrm{switch}. \\
\textrm{@queue-2}: (2,1,1):\textrm{stay}, &(2,1,0):\textrm{switch},
&(2,0,1):\textrm{stay}, &(2,0,0):\textrm{stay}.
\end{array}
\end{equation*}
Substituting the above zero variables into (\ref{eq:saf_work5}) and
(\ref{eq:saf_work6}), it can be seen that this policy achieves the
rate pair
\begin{equation*}
r_1= \frac{(1-\epsilon)(3-2\epsilon)}{4(2-\epsilon)},\;\;\;\;\; r_2
=\frac{3-2\epsilon}{4(2-\epsilon)}.
\end{equation*}
\\
\\
$\frac{\alpha_2}{\alpha_1} > (1-\epsilon)(3-2\epsilon)$:
\begin{equation*}
\begin{array}{llll}
\textrm{@queue-2}: (2,1,1):\textrm{stay}, &(2,1,0):\textrm{stay},
&(2,0,1):\textrm{stay}, &(2,0,0):\textrm{stay}.
\end{array}
\end{equation*}
In this case it is optimal to stay at queue-2 for all channel
conditions. Therefore the decisions at queueu-1 are arbitrary.
Namely, it is sufficient that at least one state corresponding to
server being at queue-1 take a switch decision, which is the case
for $\alpha_2/\alpha_1\ge ((1-\epsilon)(3-2\epsilon))$,since $x(3;1)
=0$ if $\alpha_2/\alpha_1\ge 1/((1-\epsilon)(3-2\epsilon))$. Since
the policy always stays at queue-2, it achieves the rate pair
\begin{equation*}
r_1= 0,\;\;\;\;\; r_2 =0.5.
\end{equation*}
Note that the case for $\alpha_2/\alpha_1 < 1$ is symmetric and can
be obtained similarly.\\
\\
\\
\textbf{Case-2: $\epsilon < \epsilon_c = 1-\sqrt{2}/2$}\\
In this case we have $(1-\epsilon)(3-2\epsilon) >
(1+\epsilon-\epsilon^2)/(1-\epsilon)$. This means that before the
state $x(6;0)$ becomes zero, namely for
$(1+\epsilon-\epsilon^2)/(1-\epsilon) < \alpha_2/\alpha_1 <
(1-\epsilon)(3-2\epsilon)$, having $x(1;1)=0$ is optimal. This means
that there is one more corner point of the rate region for $\epsilon
< \epsilon_c$. In more details we have the following optimal
policies.
\\
\\
$1 \le \frac{\alpha_2}{\alpha_1} \le
\frac{1+\epsilon-\epsilon^2}{1-\epsilon}$:\\
\begin{equation*}
\begin{array}{llll}
\textrm{@queue-1}: (1,1,1):\textrm{stay}, &(1,1,0):\textrm{stay}, &(1,0,1):\textrm{switch}, &(1,0,0):\textrm{switch}. \\
\textrm{@queue-2}: (2,1,1):\textrm{stay}, &(2,1,0):\textrm{switch},
&(2,0,1):\textrm{stay}, &(2,0,0):\textrm{stay}.
\end{array}
\end{equation*}
This policy is the same policy as in the previous case and it
achieves the rate pair
\begin{equation*}
r_1= \frac{(1-\epsilon)(3-2\epsilon)}{4(2-\epsilon)},\;\;\;\;\; r_2
=\frac{3-2\epsilon}{4(2-\epsilon)}.
\end{equation*}
\\
\\
$ \frac{\alpha_2}{\alpha_1}
> \frac{1+\epsilon-\epsilon^2}{1-\epsilon}$:\\
\\
We have the following deterministic actions.
\begin{equation*}
\begin{array}{llll}
\textrm{@queue-1}: (1,1,1):\textrm{switch}, &(1,1,0):\textrm{?}, &(1,0,1):\textrm{switch}, &(1,0,0):\textrm{switch}. \\
\textrm{@queue-2}: (2,1,1):\textrm{stay}, &(2,1,0):\textrm{?},
&(2,0,1):\textrm{stay}, &(2,0,0):\textrm{stay}.
\end{array}
\end{equation*}
In order to find the final threshold on $\alpha_2/\alpha_1$, we
substitute the above deterministic decisions in (\ref{eq:saf2}),
(\ref{eq:saf3}) and (\ref{eq:saf4}). Utilizing also
(\ref{eq:state_1}), (\ref{eq:state_2}), (\ref{eq:state_3}) and
(\ref{eq:state_4}) we obtain
\begin{eqnarray}
x(2;1) \!\!\!\!&=&\!\!\!\! \frac{(1-\epsilon)^2}{4} - (1-\epsilon)^2 x(6;1)\label{eq:saf_work7}\\
x(5;1)+x(7;1) \!\!\!\!&=&\!\!\!\! \frac{2-\epsilon}{4} + \epsilon x(6;1)\label{eq:saf_work8}\\
\end{eqnarray}
The previous threshold on $\alpha_2/\alpha_1$ for $x(6;0)$ to be
zero, i.e., $(1-\epsilon)(3-2\epsilon)$, is valid for the case where
$x(1;0)=0$. Other decisions staying the same, when $x(1;0)$ is
positive and $x(1;1)=0$, $r_2$ increases and $r_1$ decreases.
Therefore the threshold on $\alpha_2/\alpha_1$ for $x(6;0)$ to be
zero changes, in particular it becomes $\alpha_2/\alpha_1
> (1-\epsilon)^2/\epsilon$. This gives the following two regions:
\\
\\
$ \frac{1+\epsilon-\epsilon^2}{1-\epsilon} \le
\frac{\alpha_2}{\alpha_1}
\le \frac{(1-\epsilon)^2}{\epsilon}$:\\
\\
The optimal policy is
\begin{equation*}
\begin{array}{llll}
\textrm{@queue-1}: (1,1,1):\textrm{switch}, &(1,1,0):\textrm{stay}, &(1,0,1):\textrm{switch}, &(1,0,0):\textrm{switch}. \\
\textrm{@queue-2}: (2,1,1):\textrm{stay}, &(2,1,0):\textrm{switch},
&(2,0,1):\textrm{stay}, &(2,0,0):\textrm{stay}.
\end{array}
\end{equation*}
From (\ref{eq:saf_work7}) and (\ref{eq:saf_work8}) it is easy to see
that this policy achieves
\begin{equation*}
r_1= \frac{(1-\epsilon)^2}{4},\;\;\;\;\; r_2 =\frac{2-\epsilon}{4}.
\end{equation*}
\\
\\
$\frac{\alpha_2}{\alpha_1}
> \frac{(1-\epsilon)^2}{\epsilon}$:\\
\\
The optimal policy is
\begin{equation*}
\begin{array}{llll}
\textrm{@queue-2}: (2,1,1):\textrm{stay}, &(2,1,0):\textrm{stay},
&(2,0,1):\textrm{stay}, &(2,0,0):\textrm{stay}.
\end{array}
\end{equation*}
This policy achives
\begin{equation*}
r_1= 0,\;\;\;\;\; r_2 =0.5.
\end{equation*}
Similar to Case-1, the case $\alpha_2/\alpha_1 < 1$ is symmetric and
can be solved similarly.

Thus we have characterized the corner point of the stability region for the two regions of $\epsilon$. 
Using these corner points, it is easy to derive the expressions for the lines connecting these corner points, 
which are given in Theorem \ref{thm:stab}.

\section*{Appendix D-Proof of Theorem \ref{thm:FBDC}}\label{Sec:App-D}

\begin{proof}
Let $D_i(t)$ be $1$ if there is a departure from queue-$i$ at time
slot $t$ and zero otherwise, we have the following queue evolution
relation.
\begin{equation*}
Q_i(t+1) = Q_i(t) + A_i(t) - D_i(t).
\end{equation*}
Writing similar expressions for time slots $t \in \{t+2,...,t+T \}$
and summing all the expressions creates a telescoping series,
yielding
\begin{equation*}
\displaystyle Q_i(t+T) = Q_i(t)  - \sum_{\tau=0}^{T-1}D_i(t+\tau) +
\sum_{\tau=0}^{T-1}A_i(t+\tau).
\end{equation*}
Taking the square of both sides we obtain
\begin{equation}\label{eq:drift1}
\displaystyle Q_i(t+T)^2 \le Q_i(t)^2  +
\Big(\sum_{\tau=0}^{T-1}D_i(t+\tau) \Big)^2 +
\Big(\sum_{\tau=0}^{T-1}A_i(t+\tau) \Big)^2 - 2Q_i(t)
\Big(\sum_{\tau=0}^{T-1}D_i(t+\tau)
-\sum_{\tau=0}^{T-1}A_i(t+\tau)\Big).
\end{equation}
Define the quadratic Lyapunov function \beq L(\mathbf{Q}(t)) =
\sum_{i=1}^2 Q_i^2(t),\nonumber \eeq and the $T$-step conditional
Lyapunov drift
\begin{equation*}
\Delta_T(t)\triangleq \mathbb{E} \left\{ L(\mathbf{Q}(t+T)) -
L(\mathbf{Q}(t))\big|\mathbf{Q}(t)\right\}.
\end{equation*}
Summing (\ref{eq:drift1}) over both queues, taking conditional
expectation, using $D_i(t)\le 1$ for all time slots $t$,
$\mathbb{E}\{A_i(t)^2\} \le A_{\max}^2$ and
$\mathbb{E}\{A_i(t_1)A_i(t_2)\} \le
\sqrt{\mathbb{E}\{A_i(t_1)\}^2\mathbb{E}\{A_i(t_2)\}^2}\le
A_{\max}^2$ for all $t_1$ and $t_2$ we have \beqn \Delta_T(t)
\!\!\!\!\!\!\!\!&\le\!\!\!\!\!\!\!& 2BT^2 \!\!+\!\! 2\mathbb{E}
\left\{ \sum_{i} Q_i(t)\!\! \sum_{\tau=0}^{T-1} \left[A_i(t+\tau)
- D_i(t+\tau) \right] \big|\mathbf{Q}(t)\!\!\right\}\nonumber \\
& = &   2BT^2 + 2T \sum_{i} Q_i(t) \lambda_i \!\!-\!\! 2\sum_{i}
Q_i(t) \mathbb{E}\left\{ \sum_{\tau=0}^{T-1} D_i(t+\tau)
\big|\mathbf{Q}(t)\!\! \right\} \nonumber\eeqn where
$B=1+A_{\max}^2$ is a constant. 

Let $r_i(t)$ be a reward function such that $r_i(t)=1$ if at time
$t$ the server is at queue-$i$ with ON channel and decides to stay
at queue-$i$ at time $t$ and $r_i(t)=0$ otherwise. Note that
$r_i(t)$ is simply the reward function associated with applying
policy $\pi^*$ to the saturated queue system whose infinite horizon
average rate is $\mathbf{r}^*=(r_1^*,r_2^*)$. Let $\mathbf{x}^*$ be
the optimal vector of state action frequencies corresponding to
$\pi^*$. Define the time average empirical reward from queue-$i$ in
the saturated system, $\hat{r}_{T,i}(t)$, and that
in the actual system, $\hat{D}_{T,i}(t)$, as\\
\beq \hat{r}_{ T,i }(t) \triangleq \frac{1}{T}\sum_{\tau=0}^{T-1}
r_i(t+\tau),\;\;\;\hat{D}_{ T,i }(t) \triangleq
\frac{1}{T}\sum_{\tau=0}^{T-1} D_i(t+\tau)\nonumber. \eeq
Also define the corresponding two dimensional vectors
$\hat{\mathbf{r}}_{T}(t)$ and $\hat{\mathbf{D}}_{ T }(t)$. Similarly
define time average empirical state action frequency vector
$\hat{\mathbf{x}}_{ T}(t)$. Let $\mathbf{a.b}$ denote the inner
product for vectors $\mathbf{a}$ and $\mathbf{b}$. From the
definition of the rewards in terms of state action frequencies in
(\ref{eq:dept_rate}) we can write $\hat{r}_{ T,1 }(t) =
\mathbf{a_1}.\hat{\mathbf{x}}_{ T}(t)$, $\hat{r}_{ T,2 }(t) =
\mathbf{a_2}.\mathbf{\hat{x}}_{ T}(t)$ and $r_1^*=\mathbf{a_1.x}^*$,
$r_2^*=\mathbf{a_2.x}^*$, where $\mathbf{a_1}$ and $\mathbf{a_2}$
are appropriate vectors of dimension 16. Now we have that as $T$
increases, $\hat{\mathbf{x}}_{ T}(t)$ converges to $\mathbf{x}^*$
and hence $\mathbf{\hat{r}}_T(t)$ converges to $\mathbf{r}^*$ with
probability 1 regardless of the initial state of the system. More
precisely we have the following lemma \cite{GlynnOrm02},
\cite{shie05}:
\begin{lemma}\label{lem:MDP_convergence}
\emph{For every choice of initial state distribution, there exists
constants $c_1$ and $c_2$ such that}
\begin{equation*}
\mathbf{P}(||\hat{\mathbf{x}}_{ T}(t)-\mathbf{x}^*|| \ge \delta_1)
\le c_1 e^{-c_2\delta_1^2 T}, \;\;\; \forall T\ge1,\; \forall
\delta_1
>0.
\end{equation*}
\end{lemma}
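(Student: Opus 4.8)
The plan is to reduce the claim to a standard concentration inequality for the empirical occupation measure of a finite-state Markov chain. Under the \emph{stationary deterministic} policy $\pi^*$, the action taken in each slot is a fixed function of the state, $a_\tau = \pi^*(s_\tau)$, so the state process $s_\tau = (m(\tau),C_1(\tau),C_2(\tau))$ is itself a time-homogeneous Markov chain on the $8$-state space $S$ with transition matrix $P^{\pi^*}$ determined by the channel statistics and by $\pi^*$. Because the action is pinned down by the state, the $16$-dimensional empirical state-action frequency vector $\hat{\mathbf{x}}_T(t)$ is merely a re-indexing of the empirical \emph{state} occupation measure: the coordinate $\hat{x}_{T}(s;\pi^*(s))$ equals the fraction of the $T$ slots spent in state $s$, and all other coordinates vanish. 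Likewise $\mathbf{x}^*$ is the unique stationary distribution of $P^{\pi^*}$ placed on the deterministic actions. Hence it suffices to prove exponential concentration of the empirical state distribution around its stationary value.

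First I would establish that $P^{\pi^*}$ is uniformly ergodic. Since $S$ is finite and, as noted before the lemma, the chain induced by any admissible policy has a single recurrent class (the all-switch policy having been excluded), $P^{\pi^*}$ satisfies a Doeblin/minorization condition on its recurrent class; aperiodicity follows from the self-loop present at any recurrent \emph{stay} state, whose channel persists with probability at least $(1-\epsilon)^2 > 0$. Finiteness of $S$ also guarantees that, from any initial distribution, the time to absorption into the recurrent class has a geometric tail uniform in the starting state, so the transient phase perturbs $\hat{\mathbf{x}}_T(t)$ by only $O(1/T)$ except on an exponentially small event; this contribution is absorbed into the constants.

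Next, for each state-action coordinate $k$ I would apply a Hoeffding-type concentration bound for uniformly ergodic Markov chains to the bounded indicator $f_k(\cdot) = \mathbf{1}_{\{\,\cdot\, = s_k\}}$, for which $\hat{x}_{T,k}(t) = \frac{1}{T}\sum_{\tau=0}^{T-1} f_k(s_{t+\tau})$ and $x^*_k = \mathbb{E}_{\mathbf{x}^*}[f_k]$. The results of Glynn and Ormoneit \cite{GlynnOrm02} (see also \cite{shie05}) yield, for each coordinate and each tail, a bound of the form $\mathbf{P}(|\hat{x}_{T,k}(t)-x^*_k| \ge \delta) \le \tilde{c}_1 e^{-\tilde{c}_2 \delta^2 T}$ with constants depending only on the ergodicity coefficient (spectral gap or Doeblin constant) of $P^{\pi^*}$ and \emph{not} on the initial distribution. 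A union bound over the $16$ coordinates, together with the equivalence of norms in finite dimension (so that $\|\hat{\mathbf{x}}_T(t)-\mathbf{x}^*\| \ge \delta_1$ forces some coordinate to deviate by at least $\delta_1/4$), collapses these scalar estimates into the stated vector bound $c_1 e^{-c_2 \delta_1^2 T}$ after redefining the constants.

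The main obstacle is the uniform-ergodicity step: one must verify that $c_1$ and $c_2$ can be taken independent of both the initial state distribution and the window start time $t$, and that any residual periodicity or transience of $P^{\pi^*}$ does not spoil the Gaussian-type tail. Finiteness of $S$ makes all of this tractable---uniform ergodicity is automatic once the single-recurrent-class and aperiodicity facts are in place---so the estimate reduces to invoking the cited inequalities, with the transient-phase correction handled as above.
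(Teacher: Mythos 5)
Your proposal is correct and is essentially the paper's own approach: the paper gives no proof of this lemma, stating it as a direct consequence of the Hoeffding-type concentration inequality for uniformly ergodic Markov chains of Glynn and Ormoneit \cite{GlynnOrm02} (see also \cite{shie05}). Your argument---observing that under the stationary deterministic policy $\pi^*$ the state process is a finite unichain, aperiodic (hence uniformly ergodic) Markov chain, so that the state-action frequencies reduce to state occupation frequencies, and then applying \cite{GlynnOrm02} coordinate-wise with a union bound---simply supplies the verification of hypotheses that the paper delegates to those citations.
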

Therefore using
$||\hat{\mathbf{r}}_{ T}(t)-\mathbf{r}^*|| \le ||\hat{\mathbf{x}}_{
T}(t)-\mathbf{x}^*||\big(||a_1||^2 + ||a_2||^2\big)^{\frac{1}{2}},$
we have that there exists constants $c_2$ and $c_3$ such that
\begin{equation}\label{eq:MDP_rate_conv}
\mathbf{P}(||\hat{\mathbf{r}}_{ T}(t)-\mathbf{r}^*|| \ge \delta_1)
\le c_1 e^{-c_3\delta_1^2 T}, \; \forall T\ge1, \forall \delta_1
>0,
\end{equation}
under policy $\pi^*$ for any initial state distribution.
%
%
Now define the following: \beqn W_T(t)  \!\!\!&\triangleq&\!\!\!
\sum_{i} Q_i(t) \frac{1}{T}\sum_{\tau=0}^{T-1} D_i(t+\tau) =
\sum_{i} Q_i(t)
\hat{D}_{ T,i }(t).\nonumber\\
R_T(t)  \!\!\!&\triangleq&\!\!\! \sum_{i} Q_i(t)
\frac{1}{T}\sum_{\tau=0}^{T-1}
r_i(t+\tau) = \sum_{i} Q_i(t) \hat{r}_{ T,i }(t). \nonumber\\
R^*(t) \!\!\!&\triangleq&\!\!\! \sum_{i} Q_i(t) r_i^*.\nonumber
\eeqn We rewrite the drift expression as
\begin{eqnarray*}
\frac{\Delta_T(t)}{2T} \leq BT \!\!\!\!\!\!\!\!\!&&+ \sum_{i} Q_i(t)
\lambda_i -
\mathbb{ E}\left\{ W_T(t) \big|\mathbf{Q}(t) \right\}\\
= BT \!\!\!\!\!\!\!\!\!&&+ \sum_{i} Q_i(t) \lambda_i -  \mathbb{ E}
\left\{
R^*(t)\big|\mathbf{Q}(t) \right\}\nonumber\\
\!\!\!\!\!\!\!\!\!&&+ \mathbb{ E} \left\{ R^*(t)-W_T(t)\big|\mathbf{Q}(t) \right\}\\
= BT \!\!\!\!\!\!\!\!\!&&+ \sum_{i} Q_i(t) \lambda_i -  \sum_{i} Q_i(t) r_i^*\\
\!\!\!\!\!\!\!\!\!&&+ E \left\{ R^*(t)-W_T(t)\big|\mathbf{Q}(t)
\right\}.
\end{eqnarray*}
Now we bound the last term. \beqn
&\!\!\!\!\!\!\!\!\!\!\!\!\!\!\!\!\!\!\!\!\!\!\!\mathbb{ E}&
\!\!\!\!\!\!\!\!\!\!\!\!\!\!\!\!\!\!\!\!\!\!\!\left\{
R^*(t)-W_T(t)\big|\mathbf{Q}(t) \right\} = \nonumber\\
&\!\!\!\!\!\!\!\!\!=&\!\!\!\!\!\!\!\!\!\!\!\!\!\mathbb{ E}\left\{ R^*(t)-W_T(t)\big|\mathbf{Q}(t), R^*(t)-W_T(t) \ge \delta_2||\mathbf{Q(t)}|| \right\}\nonumber\\
&\;\;\;\;\;\;\;\;\;.& \!\!\!\!\mathbf{P}\left(R^*(t)-W_T(t) \ge \delta_2||\mathbf{Q(t)}||\; \big|\mathbf{Q}(t)\right) \nonumber\\
&\;\;\;\;\;\;\;+& \!\!\!\! \mathbb{E} \left\{ R^*(t)-W_T(t\big)\big|\vec{Q}(t), R^*(t)-W_T(t) < \delta_2||\mathbf{Q(t)}|| \right\}\nonumber\\
&\;\;\;\;\;\;\;\;\;.& \!\!\!\!\mathbf{P}\left(R^*(t)-W_T(t) < \delta_2||\mathbf{Q(t)}|| \; \big| \mathbf{Q}(t)\right) \nonumber\\
&\!\!\!\!\!\!\!\!\le&\!\!\!\!\!\!\!\!\!\!\!\!\!\!\!\!\!
\Big(\!\!\sum_{i}Q(t) \!\!\Big) \mathbf{P}\!\left
(|R^*(t)\!\!-\!\!W_T(t)| \ge \delta_2||\mathbf{Q(t)}|| \; \big|
\mathbf{Q}(t)\right)\!\!+\delta_2 ||\mathbf{Q}(t)||. \ \nonumber
\eeqn Consider
\begin{eqnarray*}
&\!\!\!\!\!\!\!\!\!\!\!\!\!\!\!\!\!\!\!\!\!\!\!\!\mathbf{P}&\!\!\!\!\!\!\!\!\!\!\!\!\!\!\!\!\!\!\!\!\!\!\!
\left(|R^*(t)-W_T(t)|\ge\delta_2||\mathbf{Q(t)}||\;\big| \mathbf{Q}(t)\right)  \\
&\!\!\!\!\!\!\!\!\le& \!\!\!\!\!\!\!\!\!\!\!\!\!\!\mathbf{P}\left(|R^*(t)-R_T(t)| \ge \frac{\delta_2}{2}||\mathbf{Q(t)}|| \; \big| \mathbf{Q}(t)\right) \\
&\;\;\;\;\;\;+&\!\!\!\! \mathbf{P}\left(|W_T(t)-R_T(t)| \ge \frac{\delta_2}{2}||\mathbf{Q(t)}|| \; \big| \mathbf{Q}(t)\right)\\
&\!\!\!\!\!\!\!\!\le& \!\!\!\!\!\!\!\!\!\!\!\!\!\!
\mathbf{P}\left(\!\!||\mathbf{r}^*-\mathbf{\hat{r}}_T(t)|| \!\ge
\frac{\delta_2}{2} \big| \mathbf{Q}(t)\right)\\
&\;\;\;\;\;\;+&\!\!\!\!
\mathbf{P}\left(\!||\mathbf{\hat{D}}_T(t)\!-\!\mathbf{\hat{r}}_T(t)||
\!\ge\! \frac{\delta_2}{2} \big| \!\mathbf{Q}(t)\!\!\right)
\end{eqnarray*} where the last inequality follows from the Schwartz
Inequality for inner products given as \beqn
|R^*(t)-R_T(t)|\!=\!|\mathbf{Q}(t).(\mathbf{r}^*\!-\!\mathbf{\hat{r}}_T(t))|\!\le\!||\mathbf{Q}(t)||.||\mathbf{r}^*\!-\!\mathbf{\hat{r}}_T(t)||\nonumber.
\eeqn
 Using (\ref{eq:MDP_rate_conv}), there exists constant $c_4$ such
 that
\beqn \mathbb{E} \left\{ R^*(t)\!-\!W_T(t)\big|\mathbf{Q}(t)
\right\} &\!\!\!\!\!\le&\!\! \!\!\!\!\Big(\sum_{i}Q(t)
\Big)c_1e^{-c_4\delta_2 ^2T}\!\! +\!\delta_2 ||\mathbf{Q}(t)||
\nonumber\\
&\!\!\!\!\!\!\!\!\!\!\!\!\!\!\!\!\!\!\!\!\!\!\!\!\!\!\!\!\!\!\!\!\!\!\!\!\!\!\!\!\!\!\!\!\!\!\!\!\!\!\!\!\!\!\!\!\!\!\!+&
\!\!\!\!\!\!\!\!\!\!\!\!\!\!\!\!\!\!\!\!\!\!\!\!\!\!\!\!\!\!\!\Big(\!\!\sum_{i}Q(t)\!\!
\Big)
\mathbf{P}\!\left(||\mathbf{\hat{D}}_T(t)\!-\!\mathbf{\hat{r}}_T(t)||
\ge \frac{\delta_2}{2} \big| \mathbf{Q}(t)\!\!\right)\!\!. \
\nonumber \eeqn Hence we can write the drift term as
\begin{eqnarray*}
\frac{\Delta_T(t)}{2T} \leq BT\!\!\!\!\!\!\!\!&\;\,+& \!\!\!\!
\sum_{i} Q_i(t) \lambda_i - \sum_{i} Q_i(t) r_i^* +
\Big(\sum_{i}Q(t) \Big)\\
&\!\!\!\!\!\!\!\!\!\!\!\!\!\!\!\!\!\!\!\!\!\!.&\!\!\!\!\!\!\!\!\!\!\!\!\!\!\!\!\!\!\!\!\left(c_1e^{-c_4\delta_2
^2T} \!\!+ \delta_2\!+\!
\mathbf{P}\!\left(\!||\hat{D}_T(t)-\hat{r}_T(t)|| \ge
\!\frac{\delta_2}{2}\;
\big|\mathbf{Q}(t)\!\right)\!\!\!\right)\!\!.\nonumber
\end{eqnarray*}
Note that $||\hat{D}_T(t)-\hat{r}_T(t)||$ is because of the lost
rewards due to empty queues and it is equal to zero if both of the
queues have more than $T$ packets at time $t$. Namely,
$||\hat{D}_T(t)-\hat{r}_T(t)||=0$ if $Q_1(t)\ge T$ and $Q_2(t)\ge
T$. Therefore, calling $\delta \triangleq c_1e^{-c_4\delta_2 ^2T} +
\delta_2$, we can write \beqn \frac{\Delta_T(t)}{2T}
\!\!\!\!\!&\leq&\!\! \!\!\!BT \!+\! \!\sum_{i} \!Q_i(t) \lambda_i\!
-\!\!\! \sum_{i} \!Q_i(t) r_i^* \!+\!\!
(\delta \!\!+\!\! 1_{\{\mathbf{Q}(t) < T.\mathbf{1}\}}\!)\!\!\!\sum_{i}Q(t)\! \nonumber\\
&\!\!\!\leq& \!\!\!\!\!\!BT \!\!+\!\! \sum_{i} Q_i(t) \lambda_i\!
-\! \sum_{i} Q_i(t) r_i^* \!+\! \delta\sum_{i}Q_i(t)  +
2T.\label{eq:drift_comparison} \eeqn Now for $(\lambda_1,\lambda_2)$
strictly inside the $\mathbf{\delta}$-\emph{stripped} throughput
region $\mathbf{\Lambda_s^{\delta}}$,
there exist a small $\xi >0$ such that $(\lambda_1,\lambda_2) +
(\xi,\xi) = (r_1,r_2) -(\delta,\delta)$, for some
$\mathbf{r}=(r_1,r_2) \in \mathbf{\Lambda_s}$. Therefore we have,
\begin{eqnarray*}
\frac{\Delta_T(t)}{2T} \leq (B+2)T + \sum_{i} Q_i(t)(r_i- r_i^*)-
 \xi\sum_{i}Q_i(t).
\end{eqnarray*}
Finally using $\sum_{i} Q_i(t)(r- r_i^*) \le 0$ we have \beqn
\frac{\Delta_T(t)}{2T} \leq (B+2)T - \Big(\sum_{i}Q_i(t) \Big)\xi.
\nonumber \eeqn Hence the queue sizes
have negative drift when $\sum_{i} Q_i(t)$ is outside a bounded set.
Therefore the
system is stable for $\mathbf{\lambda }$ within the
$\mathbf{\delta}$-\emph{stripped} stability region
$\mathbf{\Lambda_s^{\delta}}$ where $\delta(T)$ is a decreasing
function of $T$ (see e.g., \cite[Theorem 3]{neely05}).
Note that $\delta = c_1e^{-c_4 \delta_2^2T} + \delta_2$ for any $\delta_2 >0$. 
Therefore choosing $\delta_2$ appropriately (for example, $\delta_2=T^{-0.5+\delta_3}$ for some small $\delta_3>0$), we have that $\delta(T)$ is a decreasing function of $T$.
\end{proof}

\section*{Appendix E-Proof of Lemma \ref{lem:My_FBDC_comp}}\label{Sec:App-E}

Here we prove that $\Psi' \ge 0.9002$ where
\begin{equation*}
\Psi'=\frac{\sum_{i} Q_i(t) r_i^{My}}{\sum_{i} Q_i(t) r_i^{*}}.
\end{equation*}
\begin{proof}
We divide the proof into separate cases for different $\epsilon$
regions.
\subsubsection{Weighted Departure-Rate Ratio Analysis, Case 1: $\epsilon <\epsilon_c$}
Considering the mappings in figures \ref{Fig:myopic_map} and
\ref{Fig:optimal_map}, the regions where the Myopic policy and the
optimal policy ``chooses'' the same corner point, we have $\Psi'=1$.
In the following we analyze the ratio in the regions where the two
policies chooses different corner points. We term these cases as
``discrepant'' cases. We will use $Q_1$ and $Q_2$ instead of
$Q_1(t)$ and $Q_2(t)$ for notational simplicity. 
Note that we have that $\frac{2-\epsilon}{1-\epsilon}
> \frac{(1+\epsilon-\epsilon^2)}{(1-\epsilon)}$ always holds.
However $\frac{2-\epsilon}{1-\epsilon}$ equals
$\frac{(1-\epsilon)^2}{\epsilon}$ at $\epsilon_t = 0.245$ for the
case of $\epsilon < \epsilon_c = 0.293$.\\
\\
\textbf{Case 1.1: $\epsilon < \epsilon_t \rightarrow
\;$$\frac{2-\epsilon}{1-\epsilon} <
\frac{(1-\epsilon)^2}{\epsilon}$}\\
\textit{Discrepant Region 1: $\frac{(1-\epsilon)^2}{\epsilon} < \frac{Q_2}{Q_1}< \frac{1-\epsilon}{\epsilon}$}\\
In this case the Myopic policy chooses the corner point $b_1$
whereas the optimal policy chooses the corner point $b_0$.
Therefore,
\begin{eqnarray*}
\Psi' \!\!\!\!\!&=&\!\!\!\! \frac{Q_1\big(
\frac{(1-\epsilon)^2}{4}\big)+ Q_2 \big(
\frac{1}{2}-\frac{\epsilon}{4}\big)}  {Q_2\frac{1}{2}} \ge 1-
\frac{\epsilon}{2}+
\frac{(1-\epsilon)^2}{2}\frac{\epsilon}{1-\epsilon}\nonumber\\
&=& 1-\frac{\epsilon^2}{2} \ge 0.9700.
\end{eqnarray*}
\textit{Discrepant Region 2: $\frac{(1+\epsilon-\epsilon^2)}{1-\epsilon} < \frac{Q_2}{Q_1}< \frac{2-\epsilon}{1-\epsilon}$}\\
In this case the Myopic policy chooses the corner point $b_2$
whereas the optimal policy chooses the corner point $b_1$.
Therefore,
\begin{eqnarray*}
\Psi' \!\!\!\!\!&=&\!\!\!\! \frac{Q_1\big(
\frac{3}{8}-\frac{\epsilon}{2}+\frac{\epsilon}{8(2-\epsilon)}\big)+
Q_2 \big( \frac{3}{8}-\frac{\epsilon}{8(2-\epsilon)}\big)} {Q_1\big(
\frac{(1-\epsilon)^2}{4}\big)+ Q_2 \big(
\frac{1}{2}-\frac{\epsilon}{4}\big)} \\
&=& \frac{
\frac{3}{8}-\frac{\epsilon}{2}+\frac{\epsilon}{8(2-\epsilon)}+
\frac{Q_2}{Q_1} \big(
\frac{3}{8}-\frac{\epsilon}{8(2-\epsilon)}\big)} {
\frac{(1-\epsilon)^2}{4}+ \frac{Q_2}{Q_1} \big(
\frac{1}{2}-\frac{\epsilon}{4}\big)} \ge 0.9002.
\end{eqnarray*}
This is a minimization of a function of two variables for all
possible $\epsilon$ values in the interval $0 \le \epsilon \le
\epsilon_t$, and the ratio $\frac{Q_2}{Q_1}$ in the interval
$\frac{(1+\epsilon-\epsilon^2)}{1-\epsilon} < \frac{Q_2}{Q_1}<
\frac{2-\epsilon}{1-\epsilon}$.
\\
\\
\textbf{CASE 1.2: $\epsilon_t < \epsilon < \epsilon_c  \rightarrow
\;$$\frac{2-\epsilon}{1-\epsilon} >
\frac{(1-\epsilon)^2}{\epsilon}$}\\
\textit{Discrepant Region 1: $\frac{(2-\epsilon)}{(1-\epsilon)} < \frac{Q_2}{Q_1}< \frac{1-\epsilon}{\epsilon}$}\\
In this case the Myopic policy chooses the corner point $b_1$
whereas the optimal policy chooses the corner point $b_0$.
Therefore,
\begin{eqnarray*}
\Psi'\!\!\!\!\!&=&\!\!\!\! \frac{Q_1\big(
\frac{(1-\epsilon)^2}{4}\big)+ Q_2 \big(
\frac{1}{2}-\frac{\epsilon}{4}\big)}  {Q_2\frac{1}{2}} \ge 1-
\frac{\epsilon}{2}+
\frac{(1-\epsilon)^2}{2}\frac{\epsilon}{1-\epsilon}\\
&=& 1-\frac{\epsilon^2}{2} \ge 0.9500.
\end{eqnarray*}
\textit{Discrepant Region 2: $\frac{(1-\epsilon)^2}{\epsilon} < \frac{Q_2}{Q_1}< \frac{2-\epsilon}{1-\epsilon}$}\\
In this case the Myopic policy chooses the corner point $b_2$
whereas the optimal policy chooses the corner point $b_0$.
Therefore,
\begin{eqnarray*}
\Psi'\!\!\!\!\!&=&\!\!\!\! \frac{Q_1\big(
\frac{3}{8}-\frac{\epsilon}{2}+\frac{\epsilon}{8(2-\epsilon)}\big)+
Q_2 \big( \frac{3}{8}-\frac{\epsilon}{8(2-\epsilon)}\big)} { Q_2
\frac{1}{2}} \\
&\ge&
\!\!\!\!\!\big(\frac{1-\epsilon}{2-\epsilon}\big)\big(\frac{3}{4}-\epsilon+\frac{\epsilon}{4(2-\epsilon)}\big)
+ \frac{3}{4}-\frac{\epsilon}{4(2-\epsilon)} \ge 0.9150.
\end{eqnarray*}
\textit{Discrepant Region 3: $\frac{(1+\epsilon-\epsilon^2)}{1-\epsilon} < \frac{Q_2}{Q_1}< \frac{(1-\epsilon)^2}{\epsilon}$}\\
In this case the Myopic policy chooses the corner point $b_2$
whereas the optimal policy chooses the corner point $b_1$.
Therefore,
\begin{eqnarray*}
\Psi'\!\!\!\!\!&=&\!\!\!\! \frac{Q_1\big(
\frac{3}{8}-\frac{\epsilon}{2}+\frac{\epsilon}{8(2-\epsilon)}\big)+
Q_2 \big( \frac{3}{8}-\frac{\epsilon}{8(2-\epsilon)}\big)} {Q_1\big(
\frac{(1-\epsilon)^2}{4}\big)+ Q_2 \big(
\frac{1}{2}-\frac{\epsilon}{4}\big)} \\
&\ge& \frac{
\frac{3}{8}-\frac{\epsilon}{2}+\frac{\epsilon}{8(2-\epsilon)}+
\frac{Q_2}{Q_1} \big(
\frac{3}{8}-\frac{\epsilon}{8(2-\epsilon)}\big)} {
\frac{(1-\epsilon)^2}{4}+ \frac{Q_2}{Q_1} \big(
\frac{1}{2}-\frac{\epsilon}{4}\big)} \ge 0.9474.
\end{eqnarray*}

\subsection{Weighted Departure-Rate Ratio Analysis, Case 2: $\epsilon_c <
\epsilon < 0.5$} Considering the mappings in figures
\ref{Fig:myopic_map2} and \ref{Fig:optimal_map2}, again for the
regions where the Myopic policy and the optimal policy ``chooses''
the same corner point, we have $\Psi'=1$. We analyze the ratio in
the regions where the two policies chooses different corner points
termed as ``discrepant'' cases. Note that
$(1-\epsilon)(3-2\epsilon)$ is always less than or equal to
$(1-\epsilon)/\epsilon$ for $\epsilon \ge \epsilon_c$. Since due to
$\epsilon > \epsilon_c$ we also have $\frac{1-\epsilon}{\epsilon} <
\frac{2-\epsilon}{1-\epsilon}$, there is only one discrepancy
region.\\
\textit{Discrepant Region 1: $(1-\epsilon)(3-2\epsilon) < \frac{Q_2}{Q_1}< \frac{1-\epsilon}{\epsilon}$}\\
In this case the Myopic policy chooses the corner point $b_1$
whereas the optimal policy chooses the corner point $b_0$.
Therefore,
\begin{eqnarray*}\label{eq:Psi_6}
\Psi'\!\!\!\!\!&=&\!\!\!\! \frac{Q_1\big(
\frac{3}{8}-\frac{\epsilon}{2}+\frac{\epsilon}{8(2-\epsilon)}\big)+
Q_2 \big( \frac{3}{8}-\frac{\epsilon}{8(2-\epsilon)}\big)}
{Q_2\frac{1}{2}} \\
&\ge&
\!\!\!\!\big(\frac{\epsilon}{1-\epsilon}\big)\big(\frac{3}{4}-\epsilon+\frac{\epsilon}{4(2-\epsilon)}\big)
+ \frac{3}{4}-\frac{\epsilon}{4(2-\epsilon)} \ge 0.914.
\end{eqnarray*}\\
Combining all the cases, for all $\epsilon \in [0,0.5]$, we have
that $\Psi' \ge 0.9002$ for all possible $Q_1$ and $Q_2$.
\end{proof}

\bibliographystyle{IEEE}

\begin{thebibliography}{1}

\bibitem{AhmadTaraKrish09} S. Ahmad, L. Mingyan, T. Javidi, Q. Zhao, and B. Krishnamachari,
``Optimality of Myopic Sensing in Multichannel Opportunistic
Access,'' \emph{IEEE Trans. Infor. Theory}, vol.~55, no. 9,
pp.~4040-4050, Sept. 2009.
%
%


\bibitem{AltKonsLiu92} E. Altman, P. Konstantopoulos, and Z. Liu, ``Stability, monotonicity and invariant quantities in
general polling systems,'' \emph{Queuing Sys.}, vol.~11, pp.~35-57,
Mar. 1992.

\bibitem{AltKush00} E. Altman and H. J. Kushner, ``Control of polling in presense of vacations in heavy traffic with
applications to satellite and mobile radio systems,'' \emph{SIAM J.
on Control and Opt.}, vol.~41, pp.~217-252, 2002.

\bibitem{BlakeLong09} L. Blake and M. Long, ''Antennas: Fundamentals, Design, Measurement,'' \emph{SciTech}, 2009.

\bibitem{Berz} A.~Brzezinski, ``Scheduling algorithms for throughput maximization in data networks,'' Ph.D. thesis, MIT, 2007.


\bibitem{ChapKarSar05} P. Chaporkar, K. Kar, and S. Sarkar, ``Throughput guarantees through maximal scheduling
in wireless networks,'' In \emph{Proc. Allerton'05}, Sept. 2005.

\bibitem{Long10}
L. B. Le, E. Modiano, C. Joo, and N. B. Shroff,
``Longest-queue-first scheduling under SINR interference model,'' In
\emph{Proc. ACM MobiHoc'10}, Sept. 2010, to appear.
%

\bibitem{EryilOzMod07} A. Eryilmaz, A. Ozdaglar, and E. Modiano, ``Polynomial complexity algorithms for full utilization of multi-hop
wireless networks,'' In \emph{Proc. IEEE Infocom'07}, May. 2007.
%
\bibitem{Geor_Neely_Tass06} L. Georgiadis, M. Neely, and L. Tassiulas, ``Resource Allocation and Cross-Layer Control in Wireless Networks,'' Now Publishers, 2006.

\bibitem{GlynnOrm02} P. W. Glynn and D. Ormoneit, ``Hoeffding's inequality for uniformly ergodic Markov chains,''
\emph{Stat. and Poly. Letters}, vol. 56, pp. 143-146, 2002.

\bibitem{KarLuo07} K. Kar, X. Luo, and S. Sarkar, ``Throughput-optimal scheduling in
multichannel access point networks under infrequent channel
measurements,'' In \emph{Proc. IEEE Infocom'07}, May. 2007.

\bibitem{Levy} H. Levy, M. Sidi, and O.L. Boxma, ``Dominance relations in polling systems,''
\emph{Queueing Systems}, vol. 6, pp. 155-172, Apr. 1990.

\bibitem{LiNeely10} C. Li and M. Neely, ``On achievable network capacity and throughput-achieving policies over Markov
ON/OFF channels,'' In \emph{Proc. WiOpt'10}, Jun. 2010.

\bibitem{LinShr05} X. Lin and N. B. Shroff, ``The impact of imperfect scheduling on
cross-layer rate control in wireless networks,'' In \emph{Proc. IEEE
Infocom'05}, Mar. 2005.


\bibitem{LiuNainTow} Z. Lui, P. Nain, and D. Towsley, ``On optimal polling policies,'' \emph{Queuing Sys.}, vol. 11, pp.~59-83, Jul. 1992.

\bibitem{shie05}
S.~Mannor and J.~N.~Tsitsiklis, ``On the emperical state-action
frequencies in Markov Decision Processes under general policies,''
{\em Mathematics of Operation Research}, vol. 30, no. 3, Aug. 2005.

\bibitem{ModBarry00}
E. Modiano and R. Barry, ``A novel medium access control protocol
for WDM-based LAN's and access networks using a Master/Slave
scheduler,'' {\em IEEE J. Lightwave Tech.}, vol. 18, no. 4, pp.
461--468, Apr. 2000.

\bibitem{ModShahZuss06} E. Modiano, D. Shah and G. Zussman, ``Maximizing throughput in
wireless networks via Gossip,'' In \emph{Proc. ACM
SIGMETRICS/Performance'06}, June 2006.


\bibitem{NavDas07} V. Navda, A. Subramanian, K. Dhanasekaran, A. Timm-Giel, and S. Das,
``MobiSteer: Using Steerable Beam Directional Antenna for
Vehicular Network Access,'' In \emph{Proc. ACM MobiSys}, Jun. 2007.

\bibitem{neely03}
M. J. Neely, E. Modiano, and C. E. Rohrs, ``Power allocation and
routing in multi-beam satellites with time varying channels,'' {\em
IEEE Trans. Netw.}, vol. 11, no. 1, pp. 138--152, Feb. 2003.


\bibitem{neely05}
M. J. Neely, E. Modiano, and C. E. Rohrs, ``Dynamic power allocation
and routing for time varying wireless networks,'' {\em IEEE J. Sel.
Areas Commun.}, vol. 23, no. 1, pp. 89--103, Jan. 2005.

\bibitem{neely05inf}
M.~Neely, E.~Modiano, and C.~Li, ``Fairness and optimal stochastic
control for heterogeneous networks,'' In \emph{Proc. IEEE
Infocom'05}, Mar. 2005.

\bibitem{puterman05} M. Puterman, ''Markov decision processes,'' \emph{Wiley}, 2005.


\bibitem{ShahWis06} D. Shah and D. J. Wischik, ``Optimal scheduling algorithms for input-queued switches,'' In \emph{Proc. IEEE
Infocom'06}, Mar. 2006.



\bibitem{AnnaEphr09}
A. Pantelidou, A. Ephremides, and A. Tits ``A cross-layer approach
for stable throughput maximization under channel state
uncertainty,'' {\em Wireless Networks}, vol. 15, no.5, pp. 555-569,
Jul. 2009.

\bibitem{Stolyar04}
A. L. Stolyar, ``Maxweight scheduling in a generalized switch: State
space collapse and workload minimization in heavy traffic,'' {\em
Annals of Appl. Prob.}, vol. 14, no. 1, pp. 1–-53, 2004.
%
\bibitem{Takagi} H. Takagi, ``Queueing analysis of polling models,''
\emph{ACM Computing Surveys}, pp.~5-28, no. 1, Mar. 1988.

\bibitem{tass92}
L.~Tassiulas and A.~Ephremides, ``Stability properties of
constrained queueing systems and scheduling policies for maximum
throughput in multihop radio networks,'' {\em IEEE Trans. Auto.
Control}, vol. 37, no. 12, pp. 1936-1948, Dec. 1992.

\bibitem{tass93}
L.~Tassiulas and A.~Ephremides, ``Dynamic server allocation to
parallel queues with randomly varying connectivity,'' {\em IEEE
Trans. Infor. Theory}, vol. 39, no. 2, pp. 466-478, Mar. 1993.

\bibitem{TolShu96} A. Tolkachev, V. Denisenko, A. Shishlov, and A. Shubov, ``High gain antenna systems for millimeter wave radars
with combined electronical and mechanical beam steering,'' In
\emph{Proc. IEEE Symp. Phased Array Sys. Tech.}, Oct. 2006.

\bibitem{vish06}
V. M. Vishnevskii and O. V. Semenova, ``Mathematical methods to
study the polling systems,'' {\em Auto. and Rem. Cont.}, vol. 67,
no. 2, pp. 173-220, Feb. 2006.
%

\bibitem{Walrand} J. Walrand, ``Queuing Networks,'' Englewood Cliffs, NJ:Prentice Hall, 1988.


\bibitem{WangChang96}
H.~Wang and P.~Chang, ``On verifying the first-order Markovian
assumption for a Rayleigh fading channel model,'' {\em IEEE Trans.
Veh. Tech.}, vol. 45, no. 2, pp. 353-357, May 1996.


\bibitem{WuSri06} X. Wu and R. Srikant, ``Bounds on the capacity
region of multi-hop wireless networks under distributed greedy
scheduling,'' In \emph{Proc. IEEE Infocom'06}, Mar. 2006.

\bibitem{shakk08}
L. Ying, and S. Shakkottai, ``On throughput optimality with delayed
network-state information,'' In {\em Proc. Inform. Theory and
Applic. Workshop}, Jan. 2008.

\bibitem{ZorziRao95}
M. Zorzi, R. Rao, and L. Milstein, ``On the accuracy of a
first-order Markov model for data transmission on fading channels,''
In {\em Proc. ICUPC'95}, 1995.

\bibitem{ZorziRao97}
M. Zorzi, R. Rao, and L. Milstein, ``ARQ error control for fading
mobile radio channels,'' {\em IEEE Trans. Veh. Tech.}, vol. 46, pp.
445–-455, May 1997.

\end{thebibliography}

\end{document}